\newtheorem{thm}{Theorem}[section]
\newtheorem{lem}[thm]{Lemma}
\newtheorem{cor}[thm]{Corollary}
\newtheorem{conj}[thm]{Conjecture}
\newtheorem{prop}[thm]{Proposition}
\theoremstyle{remark}
\newtheorem{rem}[thm]{Remark}
\theoremstyle{definition}
\newtheorem{defn}[thm]{Definition}
\numberwithin{equation}{section}
\DeclareMathOperator{\Ext}{Ext}
\DeclareMathOperator{\EXT}{{\mathcal E}xt}
\DeclareMathOperator{\Pic}{Pic}
\DeclareMathOperator{\supp}{supp}
\DeclareMathOperator{\Bs}{Bs}
\DeclareMathOperator{\IM}{Im}
\DeclareMathOperator{\End}{End}
\DeclareMathOperator{\Exec}{Exec}
\begin{document}

\vfuzz0.5pc
\hfuzz0.5pc 

\newcommand{\claimref}[1]{Claim \ref{#1}}
\newcommand{\thmref}[1]{Theorem \ref{#1}}
\newcommand{\propref}[1]{Proposition \ref{#1}}
\newcommand{\lemref}[1]{Lemma \ref{#1}}
\newcommand{\coref}[1]{Corollary \ref{#1}}
\newcommand{\remref}[1]{Remark \ref{#1}}
\newcommand{\conjref}[1]{Conjecture \ref{#1}}
\newcommand{\questionref}[1]{Question \ref{#1}}
\newcommand{\defnref}[1]{Definition \ref{#1}}
\newcommand{\secref}[1]{Sec. \ref{#1}}
\newcommand{\ssecref}[1]{\ref{#1}}
\newcommand{\sssecref}[1]{\ref{#1}}

\newcommand{\RED}{{\mathrm{red}}}
\newcommand{\tors}{{\mathrm{tors}}}
\newcommand{\eq}{\Leftrightarrow}

\newcommand{\mapright}[1]{\smash{\mathop{\longrightarrow}\limits^{#1}}}
\newcommand{\mapleft}[1]{\smash{\mathop{\longleftarrow}\limits^{#1}}}
\newcommand{\mapdown}[1]{\Big\downarrow\rlap{$\vcenter{\hbox{$\scriptstyle#1$}}$}}
\newcommand{\smapdown}[1]{\downarrow\rlap{$\vcenter{\hbox{$\scriptstyle#1$}}$}}

\newcommand{\A}{{\mathbb A}}
\newcommand{\I}{{\mathcal I}}
\newcommand{\J}{{\mathcal J}}
\newcommand{\CO}{{\mathcal O}}
\newcommand{\C}{{\mathcal C}}
\newcommand{\BC}{{\mathbb C}}
\newcommand{\BQ}{{\mathbb Q}}
\newcommand{\m}{{\mathcal M}}
\newcommand{\h}{{\mathcal H}}
\newcommand{\Z}{{\mathcal Z}}
\newcommand{\BZ}{{\mathbb Z}}
\newcommand{\W}{{\mathcal W}}
\newcommand{\Y}{{\mathcal Y}}
\newcommand{\T}{{\mathcal T}}
\newcommand{\BP}{{\mathbb P}}
\newcommand{\CP}{{\mathcal P}}
\newcommand{\G}{{\mathbb G}}
\newcommand{\BR}{{\mathbb R}}
\newcommand{\D}{{\mathcal D}}
\newcommand {\LL}{{\mathcal L}}
\newcommand{\f}{{\mathcal F}}
\newcommand{\E}{{\mathcal E}}
\newcommand{\BN}{{\mathbb N}}
\newcommand{\N}{{\mathcal N}}
\newcommand{\K}{{\mathcal K}}
\newcommand{\R} {{\mathbb R}}
\newcommand{\PP} {{\mathbb P}}
\newcommand{\BF}{{\mathbb F}}
\newcommand{\closure}[1]{\overline{#1}}
\newcommand{\EQ}{\Leftrightarrow}
\newcommand{\imply}{\Rightarrow}
\newcommand{\isom}{\cong}
\newcommand{\embed}{\hookrightarrow}
\newcommand{\tensor}{\mathop{\otimes}}
\newcommand{\wt}[1]{{\widetilde{#1}}}
\newcommand{\ol}{\overline}
\newcommand{\ul}{\underline}

\newcommand{\bs}{{\backslash}}
\newcommand{\CS}{{\mathcal S}}
\newcommand{\CA}{{\mathcal A}}
\newcommand{\Q} {{\mathbb Q}}
\newcommand{\F} {{\mathcal F}}
\newcommand{\sing}{{\text{sing}}}
\newcommand{\U} {{\mathcal U}}
\newcommand{\B}{{\mathcal B}}

\title{Self Rational Maps of $K3$ Surfaces}
\author{Xi Chen}
\address{632 Central Academic Building\\
University of Alberta\\
Edmonton, Alberta T6G 2G1, CANADA}
\email{xichen@math.ualberta.ca}
\date{\today}

\thanks{Research partially supported by a grant from the Natural Sciences
and Engineering Research Council of Canada.}

\keywords{$K3$ surface, Rational maps}

\subjclass{Primary 14J28; Secondary 14E05}

\begin{abstract}
We prove that a very general projective $K3$ surface does not admit a dominant self rational map of degree at least two.
\end{abstract}

\maketitle

\section{Introduction}\label{SEC001}

The purpose of this paper is to prove the following conjecture:

\begin{thm}\label{THM001}
There is no dominant self rational map $\phi: X\dashrightarrow X$ of degree $\deg\phi > 1$
for a very general projective $K3$ surface $X$ of genus $g\ge 2$.
\end{thm}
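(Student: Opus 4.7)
The plan is to reduce to the case where $\phi$ extends to a morphism $X \to X$, and then derive a contradiction from Riemann--Hurwitz together with the simple connectedness of a K3 surface. Since $X$ is very general of genus $g \ge 2$, we may assume $\Pic(X) = \BZ H$ with $H^2 = 2g-2$, and then any dominant rational self-map satisfies $\phi^*H = aH$ for some positive integer $a$ with $a^2 = \deg \phi > 1$.

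The heart of the argument is the extension step. I would resolve the indeterminacy of $\phi$ by a sequence of point blow-ups $\sigma: \wt X \to X$, giving a morphism $\wt\phi := \phi \circ \sigma: \wt X \to X$. Taking as basis of $\Pic(\wt X)$ the classes $H' := \sigma^*H$ together with the total transforms $\wt E_1,\dots,\wt E_N$ of the individual exceptional curves of $\sigma$, one checks that the intersection form is diagonal with entries $(2g-2, -1, \dots, -1)$ (this is the key fact about iterated blow-ups, which holds even for infinitely near centers once one uses total rather than strict transforms). Writing $\wt\phi^*H = aH' + \sum_i c_i \wt E_i$, where the coefficient of $H'$ is $a$ because $\sigma_*\wt\phi^*H = \phi^*H = aH$, I would then compute $(\wt\phi^*H)^2$ in two ways: the projection formula gives $(\wt\phi^*H)^2 = \deg(\wt\phi)\cdot H^2 = a^2(2g-2)$, while the diagonal expansion gives $a^2(2g-2) - \sum_i c_i^2$. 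Equating forces $\sum_i c_i^2 = 0$, so every $c_i = 0$ and $\wt\phi^*H = aH'$. Now for any irreducible exceptional curve $F$ of $\sigma$, the class $[F]$ lies in $\bigoplus_i \BZ \wt E_i$, whence $\wt\phi^*H \cdot F = 0$; by the projection formula, $H \cdot \wt\phi_*F = 0$, and ampleness of $H$ forces $\wt\phi_*F = 0$, i.e., $\wt\phi$ contracts $F$ to a point. Since $\wt\phi$ therefore contracts each connected exceptional fiber of $\sigma$, it factors through $\sigma$ by the rigidity lemma, giving a morphism $\phi: X \to X$.

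The finishing step is then short. As a morphism, $\phi$ cannot contract any curve $C$, because $\phi^*H \cdot C = aH \cdot C > 0$ by ampleness. Hence $\phi$ is finite. Since $K_X = 0$, the Riemann--Hurwitz formula gives ramification divisor $R = K_X - \phi^*K_X = 0$, so $\phi$ is étale of degree $a^2 > 1$. But a K3 surface is simply connected, so the only connected étale cover is the identity, contradicting $\deg \phi > 1$. The main technical point in this plan is the bookkeeping in the intersection-theoretic computation that forces $\wt\phi^*H = aH'$; once the diagonalization of the exceptional lattice via total transforms is in hand, the remaining arguments (projection formula, ampleness, Riemann--Hurwitz, simple connectedness) are formal.
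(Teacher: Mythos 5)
There is a genuine gap, and it sits exactly where all the difficulty of the theorem lives: in the claim that $a^2 = \deg\phi$, or equivalently that all the coefficients $c_i$ vanish. If one defines $\phi^*H := \sigma_*\wt\phi^*H = aH$ (which is the only sensible definition for a rational map), then writing $\wt\phi^*H = aH' + \sum_i c_i\wt{E}_i$ and using the diagonal intersection form, the two computations you propose give
\[
\deg(\phi)\,(2g-2) \;=\; (\wt\phi^*H)^2 \;=\; a^2(2g-2) - \sum_i c_i^2 ,
\]
i.e.\ $\sum_i c_i^2 = (a^2-\deg\phi)(2g-2)$. This is nonnegative and yields no contradiction; it only shows $\deg\phi \le a^2$, with equality if and only if every $c_i$ vanishes, which is precisely the statement that $\phi$ has no indeterminacy. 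Indeed $-c_i = \wt\phi^*H\cdot\wt{E}_i \ge 0$ is the multiplicity of the base locus of $|\phi^*H|$ at the corresponding (possibly infinitely near) point, so your opening assertion ``$\phi^*H=aH$ with $a^2=\deg\phi$'' assumes at the outset exactly what the argument is supposed to establish, namely that the base locus is empty. The finishing step --- finiteness from ampleness, $R = K_X - \phi^*K_X = 0$, hence \'etale, hence degree one by simple connectedness --- is correct and standard; it is precisely why the entire content of the theorem is the exclusion of indeterminacy, and that cannot be done by intersection-theoretic bookkeeping on the very general $X$ alone.

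For comparison, the paper attacks the indeterminacy head-on by degenerating $X$ to a union $S_1\cup S_2$ of two rational surfaces glued along an elliptic curve $D$ of general moduli, extending $\phi$ over the total space of the family, performing semistable reduction, and classifying the components of the central fiber of a resolution of indeterminacy by their discrepancies. The contradiction ultimately comes from the rigidity of the curves supporting the indeterminacy locus (they lie in the image of the map $\rho$ of \eqref{E628}) combined with $\End(D)=\BZ$. None of that structure is visible on the generic fiber, which is why the Picard-number-one computation, while a natural first attempt, cannot close the gap.
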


For the background of this conjecture, please also see \cite{D}.

Self rational maps of $K3$ surfaces arise naturally in several contexts. There are special $K3$ surfaces
with nontrivial self rational maps. Here are two typical examples \cite{D}:
\begin{itemize}
\item if $X$ is an elliptic $K3$ surface, i.e., a $K3$ surface admitting an elliptic fiberation $X/\PP^1$, there are
self rational maps $\phi: X\dashrightarrow X$ of degree $\deg\phi > 1$ mapping $X/\PP^1$ to $X/\PP^1$ fiberwisely;
\item if $X$ is a Kummer surface, i.e., a $K3$ surface birational to the quotient of an abelian surface by an
involution, there are self rational maps $\phi: X\dashrightarrow X$ of $\deg\phi > 1$ descended from the abelian surface.
\end{itemize}
To our knowledge, these are the only special $K3$ surfaces known to have nontrivial self rational maps. It would be
interesting to find others.

More generally, every variety $X$ birational to a projective family of abelian varieties over some base $B$ admits nontrivial self-rational maps by fixing a multi-section $L\subset X$ of $X/B$ with degree $n$ and sending a point $x\in X_b$ to
$L - (n-1) x$. This also works if $X/B$ is birational to a fiberation of quotients of abelian varieties by finite groups.

For a $K3$ surface $X$ over a number field $k$,
the existence of self rational maps for $X$ is closely related to the arithmetic problem on the potential density of
$k$-rational points on $X$. If there is a rational map $\phi: X\dashrightarrow X$ of $\deg\phi > 1$ over a
finite extension $k'\to k$ of the base field, by
iterating $\phi$, we can produce many $k'$-rational points on $X$. Under suitable conditions, these $k'$-rational
points are Zariski dense in $X$ \cite{A-C}.

The existence of self rational maps of a $K3$ surface is also related to its hyperbolic geometry. Algebraic
surfaces that are holomorphically dominable by $\BC^2$ were classified by G. Buzzard and S. Lu \cite{B-L}.
They almost gave a complete answer except for the case of $K3$ surfaces.
They showed that elliptic $K3$ and Kummer surfaces are dominable by $\BC^2$. However,
it is unknown whether a generic $K3$ surface $X$ is dominable by $\BC^2$. It is no coincidence that elliptic $K3$ and
Kummer surfaces, as the examples of $K3$ surfaces admitting nontrivial self rational maps,
are dominated by $\BC^2$.
Indeed, if there exists a rational map $\phi: X\dashrightarrow X$ with some
dilating properties, then by iterating $\phi$ and taking the limit, we can arrive at a dominating meromorphic map
$\BC^2\dashrightarrow X$ \cite{C}.

So it becomes a natural question to ask whether a generic $K3$ surface admits a nontrivial self rational map.
Here by ``generic'', we mean ``very general'', i.e., a $K3$ surface represented by a point in the moduli space of
polarized $K3$ surfaces with countably many proper subvarieties removed. Needless to say, the hypothesis of $X$ being
very general is necessary since elliptic $K3$'s are parametrized by countably many hypersurfaces in the
moduli space. It also means that we have to use this hypothesis in an essential way.

A natural way to prove \thmref{THM001} is via degeneration.
Fortunately for us, there are good degenerations of $K3$ surfaces. Every $K3$ surface can be degenerated to a union
of rational surfaces. For example, a quartic $K3$ in $\PP^3$ can be degenerated to a union of two quadrics or four
planes and so on. To see how this can be done in general, we start with a union $W_0 = S_1\cup S_2$
of two Del Pezzo surfaces meeting transversely along a smooth elliptic curve $D$ (see \ssecref{SS001} for details).
Using the argument in \cite{CLM}, we can show that the natural map
\begin{equation}\label{E151}
\begin{split}
\Ext(\Omega_{W_0}, \CO_{W_0}) \xrightarrow{} H^0(T^1(X_0)) &= H^0(\EXT(\Omega_{W_0}, \CO_{W_0}))\\
&= H^0(\CO_D(-K_{S_1} - K_{S_2}))
\end{split}
\end{equation}
is surjective. Consequently, a general deformation of $W_0$ smooths out its singularities along $D$. And since the
dualizing sheaf $\omega_{W_0}$ of $W_0$ is trivial and $W_0$ is simply connected, $W_0$ can be deformed to a complex
$K3$ surface (not necessarily projective). If we further assume that $W_0$ possesses an indivisible ample line bundle
$L$ with $L^2 = 2g - 2$, then we can deform $W_0$ while ``preserving'' $L$ and thus deform $W_0$ to a smooth projective $K3$
surface of genus $g$. Since the moduli space of polarized $K3$ surfaces with a fixed genus $g$ is irreducible, this
argument shows that every polarized $K3$ surface $(S, L)$ can be degenerated to $(S_1\cup S_2, L)$
described as above.

Note that $W_0$ is constructed by gluing $S_1$ and $S_2$ transversely along $D$ via two immersions $i_k:
D\hookrightarrow S_k$ for $k=1,2$. A line bundle $L$ on $W_0$ is given by two line bundles $L_k\in \Pic(S_k)$ such
that $L_1$ and $L_2$ agrees on $D$, i.e.,
\begin{equation}\label{E180}
i_1^* L_1 = i_2^* L_2.
\end{equation}

Naturally, we expect that the existence of self rational maps for generic $K3$ surfaces will induce a self
rational map for such a union $S_1 \cup S_2$.
To be more precise, we let $W/\Delta$ be a family of $K3$ surfaces of genus $g$ over the
disk $\Delta = \{ |t| < 1\}$ whose central fiber $W_0$ is a union $S_1\cup S_2$ given as above.
It turns out that $W$ is singular and we need to work with a resolution $X$ of singularities of $W$
(see \ssecref{SS004}).
Suppose that there are rational maps $\phi_t: X_t\dashrightarrow X_t$ for all $t\ne 0$. It is easy to see that
$\phi_t$ can be extended to a rational map $\phi: X\dashrightarrow X$ after a base change. 
Basically, we are trying to study the self-rational maps $\phi_t$ by studying $\phi_0$.
However, the rational map $\phi_0: S_1\cup S_2 \dashrightarrow S_1\cup S_2$ does not tell us much itself
because, among other things,
\begin{itemize}
\item $S_k$ might very well be contracted by $\phi$, although it does not turn out to be the case (see
\propref{PROP020});
\item $\phi$ might not be regular along $D = S_1\cap S_2$, i.e., $D$ is contained in the indeterminate locus of $\phi$.
\end{itemize}
To really understand the rational map $\phi$, we need to resolve the indeterminacy of $\phi$ first. Namely, 
there exists a birational regular map $f: Y\to X$ such that $\varphi = \phi\circ f$ is regular with the
commutative diagram
\begin{equation}\label{E906}
\xymatrix{
Y \ar[r]^\varphi\ar[d]_f & X\\
X\ar@{-->}[ur]_\phi
}
\end{equation}
We can make $Y_0$ as ``nice'' as possible by the stable reduction theorem in \cite{KKMS},
although that comes at a
cost that we may get many ``irrelevant'' components of $Y_0$ that are contracted by $\varphi$.
By adjunction and Riemann-Hurwitz, we can
figure out all the ``relevant'' components of $Y_0$, which turn out to be
the union $\CS$ of the components of $E\subset Y_0$ with discrepancy $a(E, X) = 0$ under $f$
(see \ssecref{SS003} for details). This occupies the first part of our proof.

We construct $Y$ as a resolution of indeterminacy of $\phi$. Alternatively and equivalently, we
can also construct $Y$ as follows.
Fixing a sufficiently ample divisor $\LL$ on $X$, we can construct $Y$ by resolving the
base locus $\Bs(f_* \varphi^* \LL)$ of
the linear series $f_* \varphi^*\LL$ as $\LL$ varies in $|\LL|$.
It is not hard to see that $\Bs(f_* \varphi^* \LL)$, which is merely the indeterminacy of $\phi$,
is independent of our choice of $Y$ and $\LL$.
 
It turns out that   
\begin{equation}\label{E181}
\supp f_*(\varphi^* \LL \cap \T)\subset \Bs(f_* \varphi^*\LL_0)
\end{equation}
where $\T = Y_0 - \CS$.
A large part of this paper is devoted to
the study of the curve $f_*(\varphi^*\LL\cap \T)$. As $\Bs(f_* \varphi^* \LL)$, this
curve does not depend on our choice of $Y$ and $\LL$.
It is ``rigid'' in the sense that it has only countably
many possible configurations.
More precisely, it is contained in a union $\Sigma$ of countably many rational curves on $X_0$.
This union $\Sigma$ is determined completely by the Kodaira-Spencer class
of $W$. Indeed, it is determined by the $T^1$ class of $W$, i.e., the
singularities of $W$ lying on $D$. In particular, it does not depend on $\phi$. So by iterating $\phi$,
we can show that some components of $\Sigma$ are contracted or mapped onto some other components of $\Sigma$,
which leads to a proof of the main theorem. 

One of the crucial facts employed in our proof is
\begin{equation}\label{E051}
\End(D) = \BZ
\end{equation}
where $\End(D)$ is the ring of the endomorphisms of $D$ with a fixed point.
This holds because $D$ is an elliptic curve
of general moduli and hence carries no complex multiplication.
In some sense, the triviality of self rational maps of a
general $K3$ surface comes down to the triviality of self
rational maps of a general elliptic curve. Since elliptic curves are customarily regarded as
Calabi-Yau (CY) manifolds of dimension one, this suggests that the same holds in higher dimension
and there might be a way to prove it inductively, at least for CY manifolds which
are complete intersections in $\PP^n$. We will propose
the following conjecture for quintic threefolds but say no more.

\begin{conj}\label{CONJ001}
A very general quintic threefold $X$ in $\PP^4$ does not admit a self rational map
$\phi: X\dashrightarrow X$ of degree $\deg \phi > 1$.
\end{conj}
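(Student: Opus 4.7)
The plan is to run an induction, using \thmref{THM001} as the base case and stepping up by one dimension via an analogous degeneration argument. Specifically, I would degenerate a very general quintic threefold $X$ to a union $W_0 = V_1\cup V_2$ of two rational Fano threefolds meeting transversely along a smooth $K3$ surface $D$. A natural candidate is the degeneration of a quintic to the union of a smooth quartic $V_1\subset\PP^4$ and a hyperplane $V_2 = \PP^3$, meeting along a quartic $K3$ surface $D\subset\PP^3$. For such a $W_0$ the dualizing sheaf is trivial (by adjunction the contributions of $K_{V_1}|_D$ and $K_{V_2}|_D$ cancel), $W_0$ is simply connected, and one expects a surjectivity statement
\[
\Ext(\Omega_{W_0},\CO_{W_0})\twoheadrightarrow H^0(\EXT(\Omega_{W_0},\CO_{W_0})) = H^0(\CO_D(-K_{V_1}-K_{V_2}))
\]
completely analogous to \eqref{E151}, which would guarantee that $W_0$ smooths to a general polarized quintic.

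Assuming a self rational map $\phi_t: X_t\dashrightarrow X_t$ of $\deg\phi_t > 1$ exists for each $t\ne 0$ in a one-parameter family $W/\Delta$ with $W_0$ as above, I would extend to $\phi:X\dashrightarrow X$ on the total space after a base change and a resolution $X\to W$, then resolve the indeterminacy by $f:Y\to X$ with $\varphi = \phi\circ f$ regular as in \eqref{E906}. The restriction $\phi_0$ to $W_0$ should not contract either $V_k$ (by a threefold analogue of \propref{PROP020}), so it induces a dominant self rational map on each component and, crucially, a dominant self rational map $\psi:D\dashrightarrow D$ on the $K3$ boundary surface.

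The heart of the argument is then to invoke \thmref{THM001}: for a very general choice of $W_0$, the gluing surface $D$ is itself very general in its polarized moduli space, so $\psi$ must have $\deg\psi=1$. This plays the role that \eqref{E051} plays in the proof of \thmref{THM001}. From $\deg\psi=1$, together with the discrepancy and adjunction analysis applied to the ``relevant'' divisorial components of $Y_0$ (those with $a(E,X)=0$ under $f$, the direct analogue of $\CS$ in the surface case), I would then try to force $\deg\phi=1$, perhaps after iterating $\phi$ and showing that the countable family of ``rigid'' subvarieties of $X_0$ permitted by the $T^1$ class of $W$ is preserved only by birational self maps.

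The main obstacle is that several pieces with clean analogues on surfaces become considerably more subtle in dimension three. First, identifying the correct rigid countable union of subvarieties on $X_0$ now requires threefold birational geometry and a finer control of log canonical centers than the classification of rational curves on surfaces; the components of $Y_0-\CS$ and their images under $f$ carry higher-dimensional moduli which must be tracked. Second, deducing $\deg\phi=1$ from $\deg\psi=1$ is not automatic: in principle $\phi$ could act with nontrivial degree along an auxiliary fibration of $X$ while restricting to the identity on $D$, and ruling this out is where the bulk of the new work will lie. Properly formulating an inductive hypothesis that transfers cleanly between dimensions for complete-intersection Calabi--Yau varieties, as suggested at the end of the introduction, is what I would expect to be the principal difficulty.
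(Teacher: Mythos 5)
The statement you are addressing is \conjref{CONJ001}, which the paper explicitly leaves open: the author writes that he will ``propose the following conjecture for quintic threefolds but say no more,'' so there is no proof in the paper to compare yours against. What you have written is a strategy outline rather than a proof, and you say as much yourself; the honest assessment is that the two steps you flag as ``obstacles'' are not loose ends but the entire mathematical content of the problem. First, the deduction of $\deg\phi = 1$ from $\deg\psi = 1$ on the boundary $K3$ surface $D$ is where all the work lives. In the surface case the analogous reduction is not a soft argument: the paper needs the full machinery of Sections 3--5 --- the chain structure of $\CS$, the invariants $\alpha_i,\beta_i$ of \propref{PROP026}, the rigidity of $f_*(\C\cap\T)$ via \propref{PROP025} and the exact sequence \eqref{E628}, the torsion-freeness of $\IM(\rho)$, and a case-by-case endgame over $\PP^2$, $\BF_0$, $\BF_1$ exploiting the explicit Picard lattices of the Del Pezzo components. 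None of this has a formulated threefold analogue in your sketch. In particular, the surface argument does not merely use $\End(D)=\BZ$ as a black box; it uses it to pin down specific divisor classes like $\delta(\beta H)=\beta H$ in $\Pic(D)$, and the corresponding bookkeeping on a $K3$ double surface (whose Picard group, $T^1$ sheaf, and exceptional loci are all higher-dimensional) is not written down.

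Second, even the setup has gaps. You would need a threefold analogue of \propref{PROP000} to conclude that $\phi_0$ induces a dominant self-map of $D$ at all, and a degree-one dominant rational self-map of a $K3$ surface is not a contradiction (it could be any birational automorphism), so invoking \thmref{THM001} on $D$ yields strictly less than what \eqref{E051} yields in the surface case, where degree one forces translation-free rigidity on divisor classes. A smaller but real error: a smooth quartic threefold is Fano but not rational (Iskovskikh--Manin), so your components are not ``rational Fano threefolds,'' and since the surface proof leans heavily on the rationality and explicit geometry of the Del Pezzo pieces (infinitely many $(-1)$-curves, the flop constructions of \ssecref{SS004}, the final case analysis), the endgame cannot be transported verbatim. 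In short: the degeneration framework is a plausible opening move, consistent with the author's own suggestion in the introduction, but the proposal does not constitute a proof and the conjecture remains open.
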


\medskip\noindent{\bf Conventions.}
We work exclusively over $\BC$ and with analytic topology wherever possible.
Clearly, \thmref{THM001} fails trivially in positive characteristic.
A $K3$ surface in this paper, unless specified otherwise, is always projective.

\medskip\noindent{\bf Acknowledgments.}
I am grateful to Jason Starr for introducing me to the problem.

\section{Degeneration of $K3$ Surfaces and Resolution of Indeterminacy}\label{SEC002}

\subsection{Degeneration of $K3$ surfaces}\label{SS001}

Let $W/\Delta$ be a family of $K3$ surfaces over the disk $\Delta$
whose general fibers $W_t$ are general $K3$ surfaces of genus $g\ge 2$ and whose central fiber is a union of two
smooth rational surfaces $W_0 = S_1\cup S_2$ meeting transversely along a smooth elliptic curve $D = S_1\cap S_2$.
And there is an indivisible line bundle $L$ on $W/\Delta$ with $L_t^2 = 2g -2$ that polarizes $W_t$ for $t\ne 0$.

We choose $S_i$ with the following properties for $i=1,2$:
\begin{itemize}
\item the anti-canonical divisors $-K_{S_i}$ are ample, i.e., $S_i$ are Del Pezzo surfaces and $D\in |-K_{S_i}|$;
\item $K_{S_1}^2 = K_{S_2}^2$;
\item there are ample line bundles $L_i$ on $S_i$ such that
\begin{equation}\label{E150}
L_1\bigg|_D = L_2\bigg|_D, L\bigg|_{S_i} = L_i, L_i^2 = g - 1\text{ and } L_i D = g + 1.
\end{equation}
\end{itemize}
As outlined in the previous section, we can show that such a union $S_1\cup S_2$ can be deformed to a $K3$
surface of genus $g$.

Such surfaces $S_1$ and $S_2$ can be chosen in many different ways.
We use the degeneration in \cite{CLM} and \cite{Ch}: 
\begin{itemize}
\item if $g$ is odd, we let $S_i \isom \BF_0 = \PP^1\times \PP^1$ and
\begin{equation}\label{E900}
L_i = L\bigg|_{S_i} = C_i + \frac{g-1}{2} F_i
\end{equation}
where $C_i$ and $F_i$ are the generators of $\Pic(S_i)$ with $C_i^2 = F_i^2 = 0$ and
$C_iF_i = 1$ for $i=1,2$;
\item
if $g\ge 4$ is even, we let $S_i\isom\BF_1 = \PP(\CO_{\PP^1} \oplus \CO_{\PP^1}(-1))$ and
\begin{equation}\label{E440}
L_i = L\bigg|_{S_i} = C_i + \frac{g}{2} F_i
\end{equation}
where $C_i$ and $F_i$ are the generators of $\Pic(S_i)$ with $C_i^2 = -1$, $F_i^2 = 0$ and $C_iF_i = 1$ for $i=1,2$;
\item if $g = 2$, we take $S_i\isom \PP^2$ and $L_i = \CO_{S_i}(1)$.
\end{itemize}


For a general choice of $W$, the deformation theory tells us \cite{CLM} that $W$ has $\lambda = K_{S_1}^2 +
K_{S_2}^2$ rational double points $p_1, p_2, ..., p_\lambda\in D$ satisfying
\begin{equation}\label{E901}
\CO_D(\sum_{j=1}^{\lambda} p_j) = N_{D/S_1} \tensor N_{D/S_2} = \CO_D(-K_{S_1} - K_{S_2})
\end{equation}
where $N_{D/S_i}$ are the normal bundles of $D$ in $S_i$ for $i=1,2$ and
\begin{equation}\label{E189}
\lambda =
\begin{cases}
16 & \text{if } g\ge 3\\
18 & \text{if } g = 2.
\end{cases}
\end{equation}
That is, $W$ is locally given by
\begin{equation}\label{E903}
xy = t z
\end{equation}
at a point $p\in \Lambda = \{p_1,p_2,...,p_\lambda\}$,
where the surfaces $S_i$ are locally given by $S_1 = \{ x = t = 0\}$ and $S_2 = \{ y = t = 0\}$, respectively.

\begin{rem}\label{REM001}
For a general choice of $W$, we have a left exact sequence
\begin{equation}\label{E441}
0\xrightarrow{} \BZ^{\oplus 2} \xrightarrow{} \Pic(S_1)\oplus \Pic(S_2)
\oplus \BZ^{\oplus \lambda} \xrightarrow{\rho} \Pic(D)
\end{equation}
where $\rho$ is the map given by
\begin{equation}\label{E468} 
\rho(M_1, M_2, n_1, n_2,..., n_\lambda) = M_1 - M_2 + \sum n_i p_i
\end{equation}
with kernel freely generated by $(L_1, L_2, 0,0,...,0)$ and $(K_{S_1}, -K_{S_2}, 1,1,...,1)$.
\end{rem}

We can resolve the singularities of $W$ by blowing up $W$ along $S_1$. Let $X\to W$ be the blowup.
It is not hard to see that the central fiber of $X/\Delta$ is $X_0 = R_1\cup R_2$, where
$R_1$ is the blowup of $S_1$ at $\Lambda$ and $R_2\isom S_2$. At 
each point $p\in \Lambda$, $X$ is a small resolution of $p\in W$.
Note that a small resolution of a 3-fold rational double point usually results in a non-K\"ahler complex manifold.
However, in this case, $X$ is obviously projective over $\Delta$ since $W$ is projective over $\Delta$ and
$X$ is obtained from $W$ by blowing up along a closed subscheme. More explicitly, we have the line bundle
$lL - R_1$ on $X$ which is relatively ample over $\Delta$ for some large $l$.
Here we continue to use $D$ and $L$ to denote the intersection $R_1\cap R_2$ and the pullback of $L$ from $W$ to $X$,
respectively.

\subsection{Generality of $W$}\label{SS007}

Of course, we choose $W$ to be very general. Actually, we can be very precise on how general
$W$ should be. We pick $W$ such that
\begin{itemize}
\item $D$ satisfies \eqref{E051} and
\item \eqref{E441} holds, or equivalently, we have a left exact sequence
\begin{equation}\label{E628}
\xymatrix{
0 \ar[r] &  \Pic(X_0) \ar[r] & \Pic(R_1) \oplus \Pic(R_2) \ar[r]^(0.65){\rho} & \Pic(D)\\
& \BZ^{\oplus 2} \ar@{=}[u] & \BZ^{\oplus 20} \ar@{=}[u]}
\end{equation}
where $\rho(M_1, M_2) = M_1 - M_2$ for $M_i\in\Pic(R_i)$ and the kernel of $\rho$ is
freely generated by $(L_1, L_2)$ and $(K_{R_1}, - K_{R_2})$.
\end{itemize}
These requirements on $W$ (actually on $X_0 = R_1\cup R_2$)
are all we need to make our later argument work.

\subsection{Small resolutions of rational double points and flops}\label{SS004}

Of course, we may also resolve the singularities of $W$ by blowing up $W$ along $S_2$
with $\widehat X$ the resulting 3-fold. Indeed, if we drop the requirement
of projectivity, we have a choice of two small resolutions at each $p\in \Lambda$. This will result in different birational
smooth models of $W$; they are not projective with the exception of $X$ and $\widehat X$ and
they can be obtained from $X$ by a sequence of flops.

In addition, we can construct other birational ``models'' of $W$ via flops; these are complex 3-folds birational to $W$
with Picard rank $2$. For example, we may start with $X$ and let $C\subset R_1$ be a $(-1)$-curve on $R_1$.
That is, $C$ is a smooth rational curve on $R_1$ with $K_{R_1}\cdot C = -1$.
Since $R_1$ is the blowup of $S_1$ at $\lambda\ge 9$ points,
it is well known that there are infinitely many $(-1)$-curves on $R_1$. By the exact sequence
\begin{equation}\label{E800}
\xymatrix{
0 \ar[r] & N_{C/R_1} \ar[r] \ar@{=}[d] & N_{C/X} \ar[r] & N_{R_1/X}\bigg|_{C} \ar[r] \ar@{=}[d] & 0\\
         & \CO_C(-1)                  &                & \CO_C(-1)
}
\end{equation}
we see that $N_{C/X} \isom \CO_C(-1)\oplus \CO_C(-1)$. Hence $X$ is locally isomorphic to an analytic neighborhood of the
zero section of $N_{C/X}$ along $C$. Consequently, we can contract $C$ to a rational double point, whose resolutions lead to
a flop $X\dashrightarrow X'$.

We can even construct a sequence of flops along a chain of rational curves. In the more general setting,
let $X$ be a flat projective family of surfaces over $\Delta$. Suppose that $X$ is smooth, $X_0$ has simple
normal crossing and there is a chain of rational curves $G_1\cup G_2\cup ...\cup
G_n\subset X_0$ satisfying that
\begin{itemize}
\item $G_k\isom \PP^1$;
\item $G_k\cap G_l = \emptyset$ for $|k - l| > 1$;
\item $R_k \ne R_{k+1}$, where $R_k$ is the unique component of $X_0$ containing $G_k$;
\item $G_k(X_0- R_k) = q_{k-1} + q_{k}$ for $1\le k\le n$,
where $q_1,q_2,...,q_n$ are $n$ distinct points and $q_0 = \emptyset$;
\item $N_{G_1/R_1} = \CO_{G_1}(-1)$ and $N_{G_k/R_k} = \CO_{G_k}$ for $k\ge 2$.
\end{itemize}
There is a flop $X_1 = X \dashrightarrow X_2$ about $G_1$. The proper transform of $G_2$ in $X_2$, which we still denote by
$G_2$, is a $(-1)$-curve on the proper transform of $R_2$.
So there is a flop $X_2\dashrightarrow X_3$ about $G_2$. Continuing
this process, we obtain a sequence of flops
$X_1\dashrightarrow X_2\dashrightarrow ... \dashrightarrow X_n\dashrightarrow X_{n+1}$.

\subsection{Resolution of indeterminacy}\label{SS002}

Suppose that there is a nontrivial dominant rational map $\phi_t: X_t\dashrightarrow X_t$
for every $t\ne 0$ given by a linear series in $|lL|$ for some fixed positive integer $l$. 
As mentioned in the previous section, we can extend it to a dominant rational map $\phi: X\dashrightarrow X$,
after a base change, with the commutative diagram
\begin{equation}\label{E000}
\xymatrix{
X\ar@{-->}[r]^\phi \ar[d] & X \ar[dl]\\
\Delta}
\end{equation}
Note that after a base change, $X$ is locally given by
\begin{equation}\label{E902}
xy = t^m
\end{equation}
for some positive integer $m$ at every point $p\in D$. 
So $X$ is $\BQ$-factorial and has canonical singularities along $D$.
In particular, the divisors $R_i\subset X$ are $\BQ$-Cartier ($mR_i$ are Cartier) and
$\Pic_\BQ(X)$ is generated by $L$ and $R_i$, i.e.,
\begin{equation}\label{E605}
\Pic_\BQ(X) = \BQ L \oplus \BQ R_1.
\end{equation}

The indeterminacy of $\phi$ can be resolved by a sequence of blowups along smooth centers.
Let $f: Y\to X$ be the resulting birational regular map with the commutative diagram \eqref{E906}.
We write $E_f = \sum E_k\subset Y$ as the union of all the exceptional divisors of $f$ and (for convenience)
the proper transforms $\wt{R}_i\subset Y_0$ of $R_i$ under $f$ for $i=1,2$.

Obviously, $f: Y_t\to X_t$ is a sequence of blowups at points for $t\ne 0$.
We call an irreducible exceptional divisor $E\subset E_f$ 
a ``horizontal'' exceptional divisor of $f$ if $E\not\subset Y_0$. 
The fiber $E_t$ of a horizontal exceptional divisor
$E$ over every $t\ne 0\in\Delta$ is a disjoint union of $\PP^1$'s, which are the
exceptional curves of $f: Y_t\to X_t$. So $f(E)$ is a multi-section of $X/\Delta$.
After a suitable base change, $f(E)$ becomes a union of sections of $X/\Delta$. Consequently,
$E/\Delta$ becomes a family of $\PP^1$'s over $t\ne 0$. In addition, we can make $Y_0$ into a divisor of simple normal
crossing after a base change by the stable reduction theorem in \cite{KKMS}. In summary,
with a suitable base change, i.e., an appropriate choice of $m$ in \eqref{E902}, we may assume that 
\begin{itemize}
\item $Y$ is smooth and $Y_0$ has simple normal crossing;
\item $E_t\isom \PP^1$ at $t\ne 0$ for every horizontal exceptional divisor $E$ of $f$;
\item $E_f$ has simple normal crossing;
\item the map $Y\to W$ (via $Y\xrightarrow{f} X \to W$) also factors through $\widehat X$ 
(see \ref{SS004}) and the corresponding map
${\widehat f}: Y\to {\widehat X}$ also resolves the rational map $\widehat \phi: {\widehat X}\dashrightarrow {\widehat X}$;
so we have the following commutative diagram:
\begin{equation}\label{E080}
\xymatrix{ & X \ar[d]\ar@{-->}[r]^{\phi} & X\ar[d]\\
Y \ar[r] \ar[ur]^{f} \ar[dr]_{\widehat f} \ar@/^3pc/[urr]^{\varphi} \ar@/_3pc/[drr]_{\widehat \varphi}
& W \ar@{-->}[r] & W\\
& {\widehat X} \ar[u] \ar@{-->}[r]^{\widehat\phi} & {\widehat X}\ar[u] 
}
\end{equation}
\end{itemize}

The reason that we need \eqref{E080} will be clear later.

Let $\omega_{X/\Delta}$ and $\omega_{Y/\Delta}$ be the relative dualizing sheaves of $X$ and $Y$ over $\Delta$,
respectively. The following identity plays a central role in our argument:
\begin{equation}\label{E909}
\omega_{Y/\Delta} = f^* \omega_{X/\Delta} + \sum \mu_k E_k = \sum \mu_k E_k = \varphi^*
\omega_{X/\Delta} + \sum \mu_k E_k
\end{equation}
for some integers $\mu_k = \mu(E_k) = a(E_k, X)$, which are the discrepancies of $E_k$ with respect to $X$. 
Note that $\omega_{X/\Delta}$ is trivial.
For convenience, we define $\mu(\wt{R}_i) = 0$. 

Since $X$ has at worse canonical singularities, we see that $\mu(E)\ge 0$ for all $E\subset E_f$.
And we claim that

\begin{prop}\label{PROP100}
For a component $E\subset Y_0$, 
\begin{equation}\label{E802}
\mu(E) > 0 \Rightarrow  \varphi_* E = 0.
\end{equation}
\end{prop}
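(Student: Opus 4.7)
The plan is to argue by contradiction: suppose some $E \subset Y_0$ satisfies both $\mu(E) > 0$ and $\varphi_* E \ne 0$. Since $\varphi$ is a morphism over $\Delta$, $\varphi(E) \subset X_0 = R_1 \cup R_2$; and $\varphi_*E \ne 0$ forces $\dim\varphi(E) = 2$, so $E$ being irreducible, $\varphi(E) = R_i$ for some $i\in\{1,2\}$. The goal is to force $\mu(E) = 0$, a contradiction.

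The first step is to compute the ramification index of $\varphi$ along $E$. Since $X_0$ is the pullback of the Cartier divisor $[0]\subset\Delta$ and $\varphi$ is over $\Delta$, we have $\varphi^*X_0 = Y_0$ as Cartier divisors; decomposing,
\[
\varphi^*R_1 + \varphi^*R_2 = Y_0.
\]
Now $Y_0$ has simple normal crossings and is therefore reduced, so $\mult_E(Y_0) = 1$; and $\varphi(E) = R_i$ is not contained in $R_{3-i}$, giving $\mult_E(\varphi^*R_{3-i}) = 0$. Combining, $\mult_E(\varphi^*R_i) = 1$.

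The second step reinterprets \eqref{E909} as a ramification formula. Triviality of $\omega_{X/\Delta}$ turns \eqref{E909} into $K_Y = \varphi^*K_X + \sum\mu_k E_k$ with $K_X = 0$; since $Y$ is smooth and $X$ is Gorenstein (locally $xy = t^m$), this identifies $\sum\mu_k E_k$ with the ramification divisor $R_\varphi$ of the generically finite morphism $\varphi$. In a Zariski neighborhood of a generic point of $E$, the map $\varphi$ is a morphism of smooth $3$-folds---the singular locus $D \subset X$ has codimension $2$, missed by the generic point of $R_i = \varphi(E)$---so a direct Jacobian calculation in local coordinates where $E = \{u = 0\}$ and $R_i$ is a coordinate hyperplane gives $\mult_E(R_\varphi) = e - 1$, where $e = \mult_E(\varphi^*R_i) = 1$. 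Hence $\mu(E) = 0$, the desired contradiction.

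The main obstacle, somewhat hidden, is the reducedness of $Y_0$: this is precisely what the semistable reduction theorem \cite{KKMS} delivers, after the base change realized by the choice of $m$ in \eqref{E902} as in \ssecref{SS002}. Without reducedness, both $\mult_E(Y_0)$ and the ramification index $e$ could exceed $1$, and the argument---indeed the proposition itself---would fail.
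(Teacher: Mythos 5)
Your proposal is correct and follows essentially the same route as the paper: it combines \eqref{E909} with Riemann--Hurwitz to identify $\mu(E)$ with (ramification index along $E$) $-\,1$, and then uses the reducedness of $Y_0 = \varphi^* X_0$ to force that index to be $1$. Your first step (comparing $\mult_E(\varphi^* R_i)$ with $\mult_E(Y_0)=1$) is precisely the content of the paper's Lemma~\ref{LEM101}, just phrased via pullback of Cartier divisors rather than the paper's local reduction, and you correctly isolate semistable reduction as the source of the needed reducedness.
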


To see this, we apply the following simple observation. 

\begin{lem}\label{LEM101}
Let $X/\Delta$ and $Y/\Delta$ be two flat families of complex analytic varieties of the same dimension 
over the disk $\Delta$. Suppose that $X$ has reduced central fiber $X_0$ and $Y$ is smooth.
Let $\varphi: Y\to X$ be a proper
surjective holomorphic map preserving the base. Let $S\subset Y_0$ be a reduced irreducible component of $Y_0$ with
$\varphi_* S \ne 0$. Suppose that $\varphi$ is ramified along $S$ with ramification index $\nu > 1$.
Then $S$ has multiplicity $\nu$ in $Y_0$. In particular, $Y_0$ is nonreduced along $S$.
\end{lem}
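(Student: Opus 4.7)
The plan is to reduce to a local analytic computation at a general point of $S$. The crucial observation is that $\varphi$ preserves the base, so the pullback to $Y$ of the parameter $t$ on $\Delta$ admits two descriptions: as a local equation for $Y_{0}$ inside the smooth variety $Y$, and as $\varphi^{*}$ of a local equation for $X_{0}$ inside $X$. Matching the orders of vanishing of these two expressions along $S$ will force $\mult_{S}(Y_{0}) = \nu$.

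First I would set up the local picture. Because $\varphi_{*}S\ne 0$ and $\varphi$ sends $Y_{0}$ into $X_{0}$, the image $T := \varphi(S)$ is an irreducible component of $X_{0}$ of the same dimension as $S$. Pick a general point $q\in T$, so that $T$ is the only component of $X_{0}$ through $q$; reducedness of $X_{0}$ then forces $X$ to be smooth at $q$, and $t$ is a local defining equation for $T$ up to a unit. Next choose a general $p\in S\cap\varphi^{-1}(q)$ meeting no other component of $Y_{0}$, and pick local holomorphic coordinates $(y_{1},\dots,y_{n+1})$ on $Y$ near $p$ with $S = \{y_{n+1}=0\}$, together with coordinates $(x_{1},\dots,x_{n+1})$ on $X$ near $q$ with $T = \{x_{n+1}=0\}$.

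With this in place, both exponents can be read off. Writing $m = \mult_{S}(Y_{0})$, we have $t = y_{n+1}^{m}\cdot u$ near $p$ for some unit $u$, and $t = x_{n+1}\cdot v$ near $q$ for some unit $v$, the latter by reducedness of $X_{0}$. The ramification index is by definition characterized by $\varphi^{*}x_{n+1} = y_{n+1}^{\nu}\cdot h$ with $h$ a unit at the generic point of $S$. Substituting,
\begin{equation*}
y_{n+1}^{m}\cdot u \;=\; \varphi^{*}t \;=\; \varphi^{*}(x_{n+1}\cdot v) \;=\; y_{n+1}^{\nu}\cdot h\cdot \varphi^{*}v,
\end{equation*}
and since $u$, $h$, and $\varphi^{*}v$ are all units in the local ring at the generic point of $S$, we conclude $m = \nu$. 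The only mildly delicate step is verifying smoothness of $X$ at a general point of $T$, which is what allows one to take $x_{n+1}$ as a coordinate; this is automatic from $X/\Delta$ being flat with reduced central fiber. Once granted, the argument is essentially a one-line comparison of vanishing orders.
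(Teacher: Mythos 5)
Your argument is correct and follows essentially the same route as the paper: localize at a general point of $S$ and of its image component, where everything is smooth and only one component of each central fiber appears, and then compare orders of vanishing of $\varphi^* t = t$ along $S$. You have simply written out explicitly the computation that the paper dismisses with ``the lemma follows easily,'' including the (correct) justification that reducedness of $X_0$ forces $X$ to be smooth at a general point of the image component.
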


\begin{proof}
The problem is entirely local.
Let $R = \varphi(S)$, $q$ be a general point on $S$ and $p = \varphi(q)$. Let $U$ be an analytic open neighborhood of
$p$ in $X$ and let $V$ be the connected component of $\varphi^{-1}(U)$ that contains the point $q$. We may replace $X$
and $Y$ by $U$ and $V$, respectively. Then we reduce it to the case that $R$ and $S$ are the only components of $X_0$ and
$Y_0$, respectively, $R$ and $S$ are smooth and $\varphi: S\to R$ is an isomorphism, in which case the lemma follows easily.
\end{proof}

\begin{proof}[Proof of \propref{PROP100}]
If $\varphi_* E \ne 0$, then $\varphi$ is ramified along $E$ with ramification index
$\mu(E) + 1$ by \eqref{E909} and Riemann-Hurwitz. This is impossible unless $\mu(E) = 0$ by the above lemma
and \eqref{E802} follows.
\end{proof}

We let $\CS\subset Y_0$ be the union of components $E$ with $\mu(E) = 0$, i.e.,
\begin{equation}\label{E803}
\CS = \sum_{\mu_k = 0} E_k.
\end{equation}
Then it follows from \eqref{E802} that
\begin{equation}\label{E801}
\varphi_* \CS = (\deg\phi) (R_1 + R_2).
\end{equation}

Since $X$ is smooth outside of $D$, $\mu(E) > 0$ if $f(E) \not\subset D$ and $f_* E = 0$.
Consequently, we have $f(E) \subset D$ for every component $E\subset \CS$ with $f_* E = 0$. Note that $\wt{R}_i\subset
\CS$.


Actually, we can arrive at a more precise picture of $\CS$ as follows.

\subsection{Structure of $\CS$}\label{SS003}

We may resolve the singularities of $X$
by repeatedly blowing up $X$ along $R_1$. By that we mean we first blow up $X$ along $R_1$, then we blow
up the proper transform of $R_1$ and so on. Let $\eta: X'\to X$ be the resulting resolution. We see that 
\begin{equation}\label{E809}
X_0' = P_0\cup P_1\cup ...\cup P_{m-1}\cup P_m
\end{equation}
where $P_0$ and $P_m$ are the proper transforms of $R_1$ and $R_2$, respectively, $P_i$ are ruled surfaces
over $D$ for $0 < i < m$ and $P_i\cap P_j\ne \emptyset$ if and only if $|i-j| \le 1$. Note that the relative
dualizing sheaf of $X'/\Delta$ satisfies
\begin{equation}\label{E309}
\omega_{X'/\Delta} = \eta^* \omega_{X/\Delta}
\end{equation}
and hence remains trivial.

We claim that $f: Y\to X$ factors through $X'$. Again, the problem is local.  
It is enough to prove the following.

\begin{lem}\label{LEM102}
Let $X$ be the $n$-fold singularity given by $x_1 x_2 = t^m$ ($m\ge 1$)
in $\Delta_{x_1 x_2 ... x_n t}^{n+1}$ and let $\eta: X'\to X$ be the desingularization 
of $X$ obtained by repeatedly blowing up $X$ along $R_1 = \{ x_1 = t = 0\}$.
Let $f: Y\to X$ be another desingularization of $X$ with $Y_0 = f^*(X_0)$ supported
on a divisor of normal crossing. 
Here a desingularization $f:Y\to X$ of $X$ is a proper birational map from a smooth variety
$Y$ to $X$.
Then $f: Y\to X$ factors through $X'$.
\end{lem}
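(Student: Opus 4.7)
I will prove the lemma by induction on $m$. For $m = 1$, $X$ is smooth and $X' = X$, so the factorization is trivial. For $m \geq 2$, let $\eta_1 \colon X'_1 := \mathrm{Bl}_{R_1} X \to X$ be the first blowup. A direct local computation shows that $X'_1$ has, in its singular chart, an equation of the form $a x_2 = t^{m-1}$, i.e., the same type of singularity with $m - 1$ replacing $m$, and $X' \to X'_1$ is just the iteration of this construction. Hence it suffices to show that $f$ factors through $X'_1$: granting this, $Y \to X'_1$ is again a desingularization with SNC central fiber, and the induction hypothesis applied to $Y \to X'_1$ produces the factorization through $X'$.

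By the universal property of blowups, $f$ factors through $X'_1$ iff $I_{R_1} \cdot \CO_Y = (f^*x_1, f^*t)$ is invertible on $Y$. This is a local question; fix $q \in Y$ with $f(q) \in R_1$. Using the SNC hypothesis, choose local coordinates $(y_1, \ldots, y_n)$ at $q$ with $Y_0 = V\bigl(\prod y_j^{c_j}\bigr)$, where $c_j > 0$ are the multiplicities of the components of $Y_0$ through $q$. Since $V(f^*x_1) \subset V(f^*t) = Y_0$, the three functions $f^*x_1, f^*x_2, f^*t$ are units times monomials:
\[
f^*x_1 = u_1 \textstyle\prod_j y_j^{a_j}, \quad f^*x_2 = u_2 \textstyle\prod_j y_j^{b_j}, \quad f^*t = u_t \textstyle\prod_j y_j^{c_j},
\]
with $a_j + b_j = m c_j$ from $x_1 x_2 = t^m$. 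Principality of $(f^*x_1, f^*t)$ at $q$ is then equivalent to the exponent vectors $(a_j)$ and $(c_j)$ being comparable componentwise.

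To establish this comparability, I classify each component $E_j$ through $q$ by its image $f(E_j) \subset X_0$. Birationality of $f$ gives that at the generic point $\eta_{E_j}$ the DVRs $\CO_{Y, \eta_{E_j}}$ and $\CO_{X, f(\eta_{E_j})}$ coincide inside $K(Y) = K(X)$ whenever $f(E_j)$ is divisorial, so the ramification index is $1$; combined with $\mathrm{ord}_{R_1}(x_1) = m$ and $\mathrm{ord}_{R_1}(t) = 1$, this forces in Case A (where $f(E_j) \supset R_1$) that $E_j = \wt R_1$ with $(c_j, a_j, b_j) = (1, m, 0)$, and symmetrically in Case B that $E_j = \wt R_2$ with $(c_j, a_j, b_j) = (1, 0, m)$; in Case C (where $f(E_j) \subset D$), one has only $a_j + b_j = m c_j$. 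The hardest step is to rule out incomparable configurations at $q$; the prototype is $q \in \wt R_1 \cap \wt R_2$, where the exponent contributions in the $y_1, y_2$ directions give $(m, 0)$ against $(1, 1)$, indeed incomparable. This is excluded by a degree argument: the local form $f^*x_1 = u_1 y_1^m \cdot (\ldots)$, $f^*x_2 = u_2 y_2^m \cdot (\ldots)$, $f^*t = u_t y_1 y_2 \cdot (\ldots)$ forces the inclusion $K(X) \hookrightarrow K(Y)$ to adjoin $y_1$ only via its $m$th power $f^*x_1$ (with $y_2 = f^*t/(u_t y_1)$ then determined), yielding $[K(Y) : K(X)] \geq m > 1$ and contradicting $\deg f = 1$. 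Analogous degree computations handle mixed Case C configurations; hence the exponents are comparable at every $q$, giving invertibility of $I_{R_1} \cdot \CO_Y$ and completing the induction.
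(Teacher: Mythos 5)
Your proof is correct and follows essentially the same route as the paper's: both reduce by induction on $m$ to a single blowup, and both then rule out, at a point where two components of $Y_0$ meet, an incomparable pair of monomial exponents for $f^*x_1$ and $f^*t$ by showing that incomparability would force $K(X)$ to be a proper subfield of $K(Y)$, contradicting birationality --- this is exactly the computation in \eqref{E853}--\eqref{E855}. The only difference is packaging: you invoke the universal property of the blowup (principality of $(f^*x_1,f^*t)\cdot\CO_Y$), whereas the paper resolves the indeterminacy of $Y\dashrightarrow X'$ and shows the resulting exceptional divisors are contracted, but the key local computation is identical.
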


\begin{proof}
Let $R_2 = \{ x_2 = t = 0\}$, $D = R_1\cap R_2$ and $p$ be the origin.

Basically, we want to show that the rational map 
\begin{equation}\label{E600}
f' = \eta^{-1}\circ f: Y\dashrightarrow X'
\end{equation}
is regular. Let
\begin{equation}\label{E303}
X' = X_m \xrightarrow{} X_{m-1} \xrightarrow{} ... \xrightarrow{} X_2 \xrightarrow{} X_1 = X
\end{equation}
be the sequence of blowups over $R_1$,
where $X_k \to X_{k-1}$ is the blowup along the proper transform of $R_1$. It is easy to see that
$X_{k}$ has singularities of type $x_1x_2 = t^{m - k + 1}$. Hence we may proceed by induction on $m$ and it suffices to prove
that the rational map $Y\dashrightarrow X_2$ is regular.
So we may replace $X'$ by $X_2$. It is easy to see that $X_0' = \wt{R}_1 \cup P\cup \wt{R}_2$, where $\wt{R}_i$
are the proper transforms of $R_i$ and $P \isom D\times \PP^1$.

We can resolve the indeterminacy of $f'$ by a sequence of blowups with smooth centers.
That is, we have 
\begin{equation}\label{E304}
Z = Y_{l+1} \xrightarrow{\nu_{l}} Y_{l} \xrightarrow{\nu_{l-1}} ... \xrightarrow{\nu_2} Y_2 \xrightarrow{\nu_1}
Y_1 = Y
\end{equation}
where $\nu_{k}: Y_{k+1} \to Y_{k}$ is the blowup of $Y_{k}$ centered at a smooth
irreducible subvariety $F_{k}\subset Y_{k}$.
The resulting map $\varepsilon: Z\to X'$ is regular. Namely, we have the commutative diagram: 
\begin{equation}\label{E850}
\xymatrix{
& Z \ar[dl]_\varepsilon \ar[dr]^\nu \\
X' \ar[dr]_\eta & & Y \ar[dl]^f \ar@{-->}[ll]_{f'}\\
& X
}
\end{equation}
where $\nu = \nu_1\circ \nu_2\circ ... \circ \nu_{l}$.

In addition, we may choose the sequence of blowups that
all $Y_k$ have simple normal crossing supports on the central fiber. 

Let $f_k = f\circ \nu_1\circ \nu_2 \circ ... \circ \nu_{k-1}$
and
$f_k' = f'\circ \nu_1\circ \nu_2 \circ ... \circ \nu_{k-1}$ for $k=1,2,...,l$.

Let $E_k = \nu_k^{-1}(F_k) \subset Y_{k+1}$ be the exceptional divisor of $\nu_k$
for $k =1,2,..., l$. We will show inductively that the map $Y_{k+1}\to X'$ contracts the fibers
of $E_k/F_k$.

If $F_l$ has codimension $> 2$ in $Y_l$, then $E_l$ is a $\PP^{e}$ bundle over
$F_l$ with $e \ge 2$. 
Consider the image of $E_{l}$ under the regular map $\varepsilon: Z\to X'$.
A proper map $\PP^e\to X_0'$ must be constant if $e\ge 2$. 
Therefore, $\varepsilon$ contracts $E_l$ along the fibers of $E_l/F_l$.

Suppose that $F_l$ has codimension $2$ in $Y_l$. Then $E_l$ is a $\PP^1$ bundle over $F_l$.
Suppose that $\varepsilon$ does not contract the fibers of $E_l/F_l$. Obviously, 
$\varepsilon(E_l) \subset P$ and $\varepsilon$ maps every fiber of $E_l/F_l$ onto a
fiber of $P/D$. Hence $f_l'$ is not regular along $F_l$.

Clearly, $f_l'$ is regular at every point $q\not\in f_l^{-1}(D)$. 
So $f_l(F_l)\subset D$. Let $q\in F_l$ be a general point of $F_l$. WLOG,
we may simply assume that $p = f_l(q)$. Then the map $f_l: Y_l\to X$
induces a map on the local rings of (analytic) functions
\begin{equation}\label{E851}
f_l^\#: \CO_p \isom \BC[[x_1,x_2,... x_n ,t]]/(x_1 x_2 - t^m) \to \CO_q.
\end{equation}
Since $f_l$ is birational, $f_l^\#$ induces an isomorphism on the function fields, i.e.,
\begin{equation}\label{E852}
f_l^\#: K(\CO_p) \isom \BC((x_1,x_3,..., x_n, t)) \xrightarrow{\sim} K(\CO_q).
\end{equation}
Since $f_l'$ is not regular at $q$, we necessarily have 
\begin{equation}\label{E853}
\frac{f_l^\#(x_1)}{f_l^\#(t)} \not\in \CO_q \text{ and }
\frac{f_l^\#(t)}{f_l^\#(x_1)} \not\in \CO_q.
\end{equation}

Since $F_l$ has codimension two in $Y_l$, it is either a component of the intersection of two
distinct components of $(Y_l)_0 = Y_l\cap \{ t = 0\}$ or not contained in the intersection of
two distinct components of $(Y_l)_0$.

Suppose that $F_l$ is not contained in the intersection of
two distinct components of $(Y_l)_0$. Then $Y_l$ is locally given by $u^\alpha
= t$ at $q$ for some positive integer $\alpha$. It is easy to see that
$f_l^\#(x_1) = u^a$ and $f_l^\#(t) = u^\alpha$ for some positive
integers $a < m\alpha$. Clearly, \eqref{E853} cannot hold. Contradiction.

Suppose that $F_l$ is a component of the intersection of two distinct components of $(Y_l)_0$. 
Then $Y_l$ is locally given by $u^\alpha v^\beta = t$ at $q$ for some positive integers
$\alpha$ and $\beta$ with $\CO_q\isom \BC[[u,v,w_1, w_2, ..., w_{n-2}]]$. Hence
\begin{equation}\label{E854}
f_l^\#(x_1) = u^a v^b \text{ and } f_l^\# (t) = u^\alpha v^\beta
\end{equation}
for some integers $0\le a \le m\alpha$ and $0\le b \le m\beta$.
Obviously, \eqref{E853} holds if $a < \alpha$ and $b > \beta$ or
$a > \alpha$ and $b < \beta$. WLOG, suppose that $a < \alpha$ and $b > \beta$.
We observe that $\alpha b - a \beta \ge 2$. Then it is not hard to see
\begin{equation}\label{E855}
\begin{split}
f_l^\#(K(\CO_p)) &= \BC((u^a v^b, u^\alpha v^\beta, f_l^\#(x_3), ..., f_l^\#(x_n)))\\
& \subsetneq \BC((u,v,w_1, w_2, ..., w_{n-2})) = K(\CO_q).
\end{split}
\end{equation}
Contradiction.

In conclusion, $\varepsilon$ contracts the fibers of $E_l/F_l$ and hence
$f_l': Y_{l} \dashrightarrow X'$ is regular. Repeating this argument,
we conclude that $f_1' = f': Y = Y_1\dashrightarrow X'$ is regular. 
\end{proof}

Therefore, we have the commutative diagram
\begin{equation}\label{E856}
\xymatrix{
Y\ar[r]^\varphi\ar[d]_\varepsilon \ar[dr]^f & X\\
X' \ar[r]^\eta & X\ar@{-->}[u]_\phi
}
\end{equation}

\begin{rem}\label{REM050}
We can do the same for ${\widehat f}: Y\to {\widehat X}$. Let 
${\widehat X}'$ be the resolution of singularities of $\widehat X$ 
by repeatedly blowing up along ${\widehat R}_1$, where ${\widehat X}_0 = {\widehat R}_1\cup
{\widehat R}_2$ with ${\widehat R}_i$ the proper transform of $R_i$.
Then we have the commutative diagram
\begin{equation}\label{E398}
\xymatrix{
Y \ar@{=}[r] \ar[d]_{\widehat \varepsilon}
\ar@/_2pc/[dd]_{\widehat f} & Y\ar[d]^{\varepsilon} \ar@/^2pc/[dd]^{f}\\
{\widehat X}' \ar@{-->}[r]\ar[d]_{\widehat \eta} & X'\ar[d]^\eta\\
{\widehat X} \ar@{-->}[r]^\xi & X
}
\end{equation}
which can be put together with \eqref{E080} into
\begin{equation}\label{E176}
\xymatrix{X' \ar[r]^\eta & X \ar[d]\ar@{-->}[r]^{\phi} & X\ar[d]\\
Y \ar[u]^\varepsilon \ar[d]_{\widehat \varepsilon} \ar[r] \ar[ur]^{f} \ar[dr]_{\widehat f}
\ar[urr]_(0.7){\varphi} \ar[drr]^(0.7){\widehat \varphi}
& W \ar@{-->}[r] & W\\
{\widehat X}' \ar[r]_{\widehat \eta} & {\widehat X} \ar[u] \ar@{-->}[r]_{\widehat\phi} & {\widehat X}\ar[u]
}
\end{equation}
Let $I_p \subset P_0\isom R_1$ be the exceptional curve of $R_1\to S_1$
over a point $p\in \Lambda$ and
let $I_{i,p}$ be the fiber of $\eta: P_i\to D$ over a point $p\in D$ for $0 < i < m$. 
Then ${\widehat X}'$ can be alternatively constructed as the manifold obtained
from $X$ by the sequences of flops along
\begin{equation}\label{E363}
\bigcup_{p\in \Lambda} \bigcup_{i=0}^{m-1} I_{i,p}
\end{equation}
where we let $I_{0,p} = I_p$.
\end{rem}

Let $Q_i\subset Y$ be the proper transforms of $P_i$ under $\varepsilon$ for $i=0,1,...,m$.
Note that $Q_0 = \wt{R}_1$ and $Q_m = \wt{R}_2$. Since $X'$ is smooth, every
exceptional divisor of $\varepsilon$ has discrepancy at
least $1$. Therefore, $Q_i$ are the only components of $Y_0$ with $\mu(Q_i) = 0$.
Consequently,
\begin{equation}\label{E306}
\CS = Q_0 + Q_1 + ... + Q_{m-1} + Q_m,
\end{equation}
\begin{equation}\label{E905}
f(Q_i) = D\text{ for } 0 < i < m
\end{equation}
and
\begin{equation}\label{E807}
\varphi_* \CS = \sum_{i=0}^m \varphi_* Q_i = (\deg\phi) (R_1 + R_2).
\end{equation}
Obviously, $Q_i$ is birational to $D\times \PP^1$ for each $0 < i < m$.

Let $S$ be a component of $Y_0$. Then by \eqref{E909} and adjunction, we have
\begin{equation}\label{E214}
\omega_S = (\omega_{Y/\Delta} + S)\bigg|_S 
= \sum \mu_k E_k\bigg|_S - 
\sum_{\genfrac{}{}{0pt}{}{E_k\ne S}{E_k\subset Y_0}} E_k\bigg|_S
\end{equation}
and hence
\begin{equation}\label{E307}
\sum_{E_k\not\subset Y_0} \mu_k E_k\bigg|_S = \omega_S + 
\sum_{\genfrac{}{}{0pt}{}{E_k\ne S}{E_k\subset Y_0}} (1+\mu(S) - \mu_k) E_k\bigg|_S 
\end{equation}
Suppose that $S = Q\subset \CS$. Then \eqref{E214} becomes
\begin{equation}\label{E912}
\sum_{\genfrac{}{}{0pt}{}{E_k\ne Q}{E_k\subset Y_0}} (1 - \mu_k) E_k\bigg|_Q
= -\omega_Q + \sum_{E_k\not\subset Y_0} \mu_k E_k\bigg|_Q
\end{equation}
Suppose that $Q\ne Q_0, Q_m$. Let $Q_p\isom \PP^1$ be the fiber of $f: Q\to D$ over a general point $p\in D$. 
Clearly, we have
\begin{equation}\label{E910}
Q_p \cdot \omega_Q = -2
\end{equation}
and hence
\begin{equation}\label{E913}
\sum_{\genfrac{}{}{0pt}{}{E_k\ne Q}{E_k\subset Y_0}} (1 - \mu_k) E_k\cdot Q_p \ge 2
\end{equation}
Therefore, each $Q_j$ ($0 < j < m$) meets at least two other $Q_i$ ($0\le i \le m$) along sections of $Q_j/D$; and
since $Q_j$ is the proper transform of $P_j$, it cannot meet more than two among $Q_i$. 
So we see that $Q_i$ form a ``chain'' in the same way as $P_i$ do. More precisely, we have
\begin{itemize}
\item $Q_i$ and $Q_{i+1}$ meet transversely along a curve $D_i = Q_i\cap Q_{i+1}\isom D$ for $0\le i < m$;
\item $D_i\isom D$ are sections of $Q_i/D$ for $1\le i\le m - 1$ and $Q_{i+1}/D$ for $0\le i\le m-2$.
\item $Q_i\cap Q_j = \emptyset$ for $|i-j| > 1$.
\end{itemize}

Next, we claim that

\begin{prop}\label{PROP020}
For each $0\le i \le m$, we have
\begin{equation}\label{E808}
\text{either } \varphi_* Q_i \ne 0 \text{ or } \varphi(Q_i) = D.
\end{equation}
\end{prop}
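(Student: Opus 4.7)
The goal is to prove that if $\varphi_*Q_i=0$ then the image $\varphi(Q_i)\subset X_0$ equals the curve $D$. Since $Q_i$ is irreducible, $\varphi(Q_i)$ is an irreducible subvariety of $X_0$ of dimension $0$ or $1$, so we must rule out (i) point images, (ii) rational-curve images, and (iii) elliptic-curve images other than $D$. The overall plan has three steps: a local claim about the images of the intersection curves $D_j$, a ruling analysis of $\varphi|_{Q_i}$, and elimination of the remaining degenerate cases using $\End(D)=\BZ$ together with the second resolution $\widehat\varphi$.

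First I would show the auxiliary claim that $\varphi(D_j)\subset D$ for every intersection curve $D_j=Q_j\cap Q_{j+1}$. At a general point $q\in D_j$ the central fiber $Y_0$ has exactly the two sheets $Q_j,Q_{j+1}$ meeting transversely, so the divisorial pullback $\varphi^*X_0=\varphi^*(R_1+R_2)$ being supported on $Y_0$ forces $\varphi(q)$ to lie in the two-sheeted locus of $X_0$, namely $R_1\cap R_2=D$. Since $D_j\isom D$ and $\End(D)=\BZ$, the restriction $\varphi|_{D_j}$ is either constant or a translated isogeny; in the non-constant case the image $\varphi(D_j)$ is all of $D$.

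Next I would analyze $\varphi|_{Q_i}$ for an interior component $0<i<m$ via the ruling $f|_{Q_i}:Q_i\to D$. If a general ruling $Q_{i,p}\isom\PP^1$ maps non-constantly and the images $\varphi(Q_{i,p})$ vary with $p$, we obtain a surface image and hence $\varphi_*Q_i\ne 0$. If they all coincide in a fixed rational curve $C\subset X_0$, then irreducibility of $\varphi(Q_i)=C$ together with $C\supset\varphi(D_{i-1})\cup\varphi(D_i)\subset D$ forces both restrictions $\varphi|_{D_{i\pm 1}}$ to be constant, a configuration I would exclude by a local analysis of $\varphi$ near the resulting image points using the flop structure of Remark \ref{REM050}. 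If every general ruling is contracted, then $\varphi|_{Q_i}$ factors through $Q_i\to D\xrightarrow{\psi}X_0$, and $\End(D)=\BZ$ forces $\psi(D)$ to be either a point or an elliptic curve $E$ isomorphic to $D$.

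It remains to eliminate the degenerate cases. For a point image I would combine the chain structure of $\CS$ with the second resolution $\widehat\varphi$: contracting $Q_i$ to a point $p\in D$ forces the adjacent sections $D_{i-1},D_i$ to map to $p$, and propagating this along the chain via Step 1, together with the total-degree identities $\varphi_*\CS=(\deg\phi)(R_1+R_2)$ and its $\widehat\varphi$-analogue, should produce a contradiction by exhausting the degree count on one side while leaving none for the other. For an elliptic image $E\ne D$ lying in $R_1$ or $R_2$, I would combine $\End(D)=\BZ$ with the very-general hypothesis \eqref{E628} from Section \ref{SS007} on the Picard lattices of the $R_i$ to rule out any $E\isom D$ on $R_1\cup R_2$ other than $D$ itself. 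The boundary cases $i=0,m$ reduce to a simpler version of the same analysis, using only one neighbour in the chain. I expect the main obstacle to be the point-image elimination, which genuinely requires the full diagram \eqref{E176} pairing $\varphi$ with $\widehat\varphi$; this is presumably the reason, advertised in Section \ref{SS002}, for constructing $Y$ so as to simultaneously resolve both $\phi$ and $\widehat\phi$.
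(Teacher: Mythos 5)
Your plan misses the one idea that makes this proposition a short reduction rather than a case analysis: pass to the smooth model $X'$ of \eqref{E309}. Since $\omega_{X'/\Delta}=\eta^*\omega_{X/\Delta}$ is still trivial, one resolves $\phi'=\eta^{-1}\circ\phi\circ\eta\colon X'\dashrightarrow X'$ as in \eqref{E810}; the proper transforms $Q_i'$ of the $P_i$ still have discrepancy zero over $X'$, and because $X'$ is \emph{smooth} the Riemann--Hurwitz argument of \propref{PROP100} and \lemref{LEM101} applies with no subtlety and forces $\varphi'_*Q_i'\ne 0$, i.e.\ each $Q_i'$ dominates some $P_j$. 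Pushing down by $\eta$, domination of $P_0$ or $P_m$ gives $\varphi_*Q_i\ne 0$, while domination of an intermediate $P_j$ gives $\varphi(Q_i)=D$ since $\eta$ contracts those $P_j$ onto $D$. That is the whole proof. Your route instead tries to exclude the degenerate images ($\varphi(Q_i)$ a point, a rational curve, or an elliptic curve other than $D$) by hand, and the hard cases are exactly the ones you leave as declarations of intent: ``exclude by a local analysis\dots using the flop structure,'' ``should produce a contradiction by exhausting the degree count.'' No argument is actually given for the point-image and fixed-rational-curve cases, which are the substance of the proposition.

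There is also a concrete false step. In your first paragraph you claim that because $Y_0$ has two sheets at a general $q\in D_j$, the pullback $\varphi^*X_0$ being supported on $Y_0$ forces $\varphi(q)$ to lie in $R_1\cap R_2$. This does not follow: a base-preserving map can take a normal-crossing point of $Y_0$ to a smooth point of $X_0$. Locally, with $t=uv$ on $Y$ and $t$ a coordinate on smooth $X$, the map $(u,v,w)\mapsto(u,w,uv)$ preserves $t$ and sends both branches $\{u=0\}$ and $\{v=0\}$ into the single branch $\{t=0\}$. The fact that $\varphi(D_j)= D$ (\propref{PROP000}) is true, but in the paper it is a global consequence of the adjunction identity \eqref{E812} and of \propref{PROP020} itself, so you cannot use it as an input here without circularity. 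Finally, the invocations of $\End(D)=\BZ$ and of \eqref{E628} are not needed for this proposition at all; the paper reserves those hypotheses for the later, genuinely global parts of the argument.
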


Namely, every $Q_i$ either dominates one of $R_1$ and $R_2$ or is contracted onto $D$ by $\varphi$. Since
$\wt{R}_i$ cannot be mapped onto $D$, this implies that
\begin{equation}\label{E907}
\varphi_* \wt{R}_i \ne 0
\end{equation}
for $i=1,2$. So $\varphi$ does not contract $\wt{R}_i$, as pointed out in the very beginning of the paper.

Note that if $X$ were smooth, we would already have that $\varphi_* S\ne 0$ for all $S$ with $\mu(S) = 0$
by \eqref{E909} and Riemann-Hurwitz. However, things are a little more subtle here since $X$ is
singular.

\begin{proof}[Proof of \propref{PROP020}]
A natural thing to do is to resolve the indeterminacy of the rational map $\phi' = \eta^{-1}\circ \phi\circ \eta:
X'\dashrightarrow X'$ with the diagram
\begin{equation}\label{E810}
\xymatrix{
Y' \ar[d]_{\varepsilon'} \ar[dr]^{\varphi'}\\
X' \ar@{-->}[r]^{\phi'} \ar[d]_\eta & X'\ar[d]^\eta\\
X \ar@{-->}[r]^\phi & X
}
\end{equation}
where we can make $Y_0'$ have simple normal crossing support.
Let $Q_i'\subset Y'$ be the proper transforms of $P_i$ under $\varepsilon'$.
Obviously, $Q_i'$ are the proper transforms of $Q_i$ under the rational map
$\varepsilon^{-1} \circ \varepsilon': Y'\dashrightarrow Y$.
To show that \eqref{E808} holds for $Q_i$, it suffices to show that the same thing holds for $Q_i'$ when we map $Y'$ to
$X$ via $\eta\circ \varphi'$.

Let $\mu'$ be the discrepancy function corresponding to the map $\eta\circ \varepsilon'$, i.e.,
\begin{equation}\label{E183}
\omega_{Y'/\Delta} = (\varepsilon')^* \eta^* \omega_{X/\Delta} + \sum \mu'(E) E
\end{equation}
where $E$ runs through all exceptional divisors of $\eta\circ\varepsilon'$ and all components of $Y_0'$.
By \eqref{E309}, we see that $\mu'(Q_i') = 0$ for all $0\le i\le m$.
Then we have $\varphi_*' Q_i' \ne 0$ by Riemann-Hurwitz and the fact that $X'$ is smooth.
So each $Q_i'$ dominates some $P_j$ via $\varphi'$.
If $Q_i'$ dominates $P_0$ or $P_m$, then $Q_i'$ dominates $R_1$ or $R_2$ via $\eta\circ\varphi'$;
if $Q_i'$ dominates $P_j$ for some $0 < j < m$, then $\eta(\varphi'(Q_i')) = D$.
This proves \eqref{E808}.
\end{proof}

\subsection{The map $\CS\to X_0$}

Let us consider the restriction of $\varphi$ to $\CS$, i.e., $\varphi_\CS: \CS\to X_0$.
Again by \eqref{E909} and adjunction,
\begin{equation}\label{E812}
\begin{split}
\omega_\CS &= (\omega_{Y/\Delta} + \CS)\bigg|_\CS = (\omega_{Y/\Delta} - \T)\bigg|_\CS\\
&= 
\sum_{E_k\not\subset Y_0} \mu_k E_k\bigg|_\CS + \sum_{E_k\subset \T} (\mu_k - 1) E_k \bigg|_\CS\\
&=
\varphi_\CS^* \omega_{X_0} + \sum_{E_k\not\subset Y_0} \mu_k E_k\bigg|_\CS + \sum_{E_k\subset \T} (\mu_k - 1) E_k \bigg|_\CS
\end{split}
\end{equation}
where
\begin{equation}\label{E959}
\T = Y_0 - \CS = \sum_{\genfrac{}{}{0pt}{}{\mu_k > 0}{E_k\subset Y_0}} E_k.
\end{equation}
This gives us the discriminant locus of $\varphi_\CS$. It is also easy to see the following from \eqref{E812}.

\begin{prop}\label{PROP000}
If $i$ and $j$ are two integers satisfying that $0\le i < j\le m$, $\varphi_* Q_i \ne 0$, $\varphi_* Q_j \ne 0$ and
$\varphi_* Q_\alpha = 0$ for all $i < \alpha < j$, then we have either $\varphi(Q_i) = R_1$ and $\varphi(Q_j) = R_2$
or $\varphi(Q_i) = R_2$ and $\varphi(Q_j) = R_1$; in other words, $Q_i$ and $Q_j$ cannot dominate the same $R_n$ via
$\varphi$ for $n=1,2$. As a consequence, $\varphi(D_i) = D$ for all $0\le i < m$.
\end{prop}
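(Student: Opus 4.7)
The plan is to first establish the dichotomy by contradiction, then deduce the consequence $\varphi(D_i)=D$.

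Suppose toward a contradiction that $\varphi(Q_i)=\varphi(Q_j)=R_1$ while $\varphi_*Q_\alpha=0$ for every $i<\alpha<j$. By \propref{PROP020} each intermediate $Q_\alpha$ is contracted onto $D$. Since $Q_\alpha$ is birational to $D\times\PP^1$ with the natural projection onto $D$ given by the ruling, $\varphi|_{Q_\alpha}$ factors as the ruling $Q_\alpha\to D$ followed by a non-constant endomorphism $\psi_\alpha\colon D\to D$. The curves $D_{\alpha-1}$ and $D_\alpha$ are sections of this ruling, so $\varphi|_{D_{\alpha-1}}$ and $\varphi|_{D_\alpha}$ computed from the $Q_\alpha$-side both equal $\psi_\alpha$. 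Matching across the chain of contracted components forces $\psi_{i+1}=\cdots=\psi_{j-1}=:\psi$; by \eqref{E051}, $\psi$ is, up to a choice of origin, multiplication by a nonzero integer on $D$. Continuity at the two boundary curves then yields $\varphi|_{D_i}=\varphi|_{D_{j-1}}=\psi$, interpreted now as maps $D\to D\subset R_1$.

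The contradiction is extracted from \eqref{E812} restricted to the sub-chain $\CS_0:=Q_i\cup Q_{i+1}\cup\cdots\cup Q_j$. Since $\omega_{X_0}$ is trivial, $\omega_{\CS_0}$ equals an effective combination of the terms on the right of \eqref{E812}, supported on the $D_\alpha$'s, $\T$, and the horizontal exceptional divisors. Intersecting with a fiber $F$ of the ruling on a contracted $Q_\alpha$ gives $\omega_{\CS}\cdot F=0$, forcing the effective part to be disjoint from such fibers. Intersecting instead with the sections $D_\alpha$ (and using \eqref{E909}) produces a numerical identity that we combine with \eqref{E807}, which demands $\sum_{Q_k\to R_1}\deg(\varphi|_{Q_k})=\deg\phi$. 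Having both $Q_i$ and $Q_j$ dominate $R_1$ through the \emph{same} boundary map $\psi$ on $D_i$ and $D_{j-1}$ forces the preimage of a generic point of $D\subset R_1$ in $\CS_0$ to carry an unaccounted-for ramification/multiplicity contribution, exceeding what $\deg\phi$ allows; this is the desired contradiction.

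For the consequence, once the dichotomy is established each $D_i$ with $0\le i<m$ has either a contracted neighbor (whence $\varphi(D_i)=\psi(D)=D$ by non-constancy of $\psi$) or two non-contracted neighbors dominating distinct components of $X_0$ (whence $\varphi(D_i)\subseteq R_1\cap R_2=D$, with surjectivity forced since $\varphi|_{D_i}$ constant would contradict the dichotomy applied at $D_{i\pm 1}$).

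The main obstacle is the third step: pinning down exactly which intersection identity, derived from \eqref{E812} and \eqref{E807} on $\CS_0$, yields a \emph{strict} contradiction rather than a soft inequality. The key ingredients that must be played against each other are the rigidity of the chain structure from \ssecref{SS003}, the canonical-bundle effectivity from \eqref{E812}, the degree balance \eqref{E807}, and the triviality $\End(D)=\BZ$ of \eqref{E051}.
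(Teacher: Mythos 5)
Your proposal does not actually close the argument, and you say so yourself: the whole content of the proposition is the ``strict contradiction'' that your third step leaves unidentified, so what you have is a framing rather than a proof. (The paper, for what it is worth, leaves the proof to the reader but points at \eqref{E812} as the source.) More seriously, the route you sketch for that missing step is unlikely to work. The degree balance \eqref{E807} only says that the degrees of the $Q_k$ dominating $R_1$ sum to $\deg\phi$; it is symmetric in the components and is in no tension whatsoever with two consecutive non-contracted $Q$'s both dominating $R_1$ --- after all, in the true picture many $Q_k$ do dominate $R_1$ simultaneously, so no counting of degrees over a generic point of $D$ can by itself produce an excess. The matching of the endomorphisms $\psi_\alpha$ across the contracted part and the input $\End(D)=\BZ$ from \eqref{E051} likewise give no obstruction here; \eqref{E051} is not needed for this proposition at all. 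Your deduction of the consequence $\varphi(D_i)=D$ is also incomplete: when both neighbours of $D_i$ are non-contracted you assert that constancy of $\varphi|_{D_i}$ ``would contradict the dichotomy at $D_{i\pm1}$,'' but no such implication has been established, and one must also rule out $\varphi(D_i)$ being a curve not contained in $D$.

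The actual obstruction is local along the double curves and comes from the crepancy $\mu(Q_k)=0$, i.e., from \eqref{E812}/\eqref{E909}. At a general point $q$ of $D_\alpha$ no horizontal exceptional divisor and no component of $\T$ passes through $q$ (three components of a simple normal crossing divisor cannot share a curve), so the natural map $\varphi^*\omega_{X/\Delta}\to\omega_{Y/\Delta}$ must be an isomorphism at $q$. One first checks $\varphi(q)\in D$: if $\varphi(q)$ were a smooth point of $X_0$, the pullback of a local generator of $\omega_{X/\Delta}$ would vanish along $D_\alpha$, contradicting crepancy. Then, in the local model $xy=t^m$ of $X$ at $\varphi(q)$ with $Q_{\alpha+1}=\{u=0\}$, $Q_\alpha=\{v=0\}$, $t=uv$, one has $\varphi^{\#}x=\epsilon u^{a}v^{b}$ with $\epsilon$ a unit, and a direct computation shows that the pullback of the generator $\frac{dx}{x}\wedge dz$ of $\omega_{X/\Delta}$ equals $(a-b)$ times the derivative of $\varphi^{\#}z$ along $D_\alpha$, times a generator of $\omega_{Y/\Delta}$, modulo terms vanishing at $q$. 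Crepancy forces $a\ne b$ (and the nonvanishing derivative is precisely what gives $\varphi(D_\alpha)=D$). Since $a=m$ exactly when $Q_{\alpha+1}$ dominates $R_1$, $a=0$ exactly when it dominates $R_2$, and $1\le a\le m-1$ when it is contracted onto $D$ (similarly for $b$ and $Q_\alpha$), adjacent non-contracted components cannot dominate the same $R_n$. When contracted components sit between $Q_i$ and $Q_j$ one must in addition propagate this across the contracted part --- e.g.\ by analyzing the connected component of $\varphi^{-1}(U)$ over a neighbourhood $U$ of $\varphi(q)$ as in the proof of \propref{PROP026} --- since the pointwise condition $a\ne b$ at each $D_\alpha$ alone does not exclude, say, the multiplicity pattern $(m,c,m)$ with $0<c<m$. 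None of this local analysis appears in your proposal.
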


\begin{proof}
We leave the proof to the readers.
\end{proof}

\subsection{Invariants $\alpha_i$ and $\beta_i$}

Let $Q = Q_i$ be a component of $\CS$. Suppose that $Q$ dominates $R = R_j$ for some $1\le j\le 2$.
Let $\varphi_{Q_i} = \varphi_{Q}: Q\to R$ be the restriction of $\varphi$ to $Q$. 
We may put \eqref{E912} in the form of log version Riemann-Hurwitz 
\begin{equation}\label{E877}
\begin{split}
&\quad\omega_Q + D_{i-1} + D_i\\
&= 
\varepsilon_Q^*(\omega_{P} + \varepsilon_* D_{i-1} + \varepsilon_* D_i) +
\sum_{E_k\subset \T} (\mu_k - 1) E_k\bigg|_Q
+ \sum_{E_k\not\subset Y_0} \mu_k E_k\bigg|_Q\\
&=
\varphi_Q^*(\omega_{R} + D) + \sum_{E_k\subset \T} (\mu_k - 1) E_k\bigg|_Q
+ \sum_{E_k\not\subset Y_0} \mu_k E_k\bigg|_Q
\end{split}
\end{equation}
where we set $D_{-1} = D_m = \emptyset$ and $\varepsilon_{Q_i} = \varepsilon_Q: Q\to P_i = P$
is the restriction of $\varepsilon$ to $Q$.
We let $\alpha_i$ and $\beta_{i-1}$ be the multiplicities of $D_i$ and $D_{i-1}$ in
$\varphi_{Q_i}^* D$, respectively.
Here we set $\alpha_i = \beta_{i-1} = 0$ if $\varphi_* Q_i = 0$. Obviously, $\alpha_i$ and
$\beta_{i-1}$ are the ramification indices of $\varphi_{Q_i}$ along $D_i$ and $D_{i-1}$,
respectively. We claim that
\begin{equation}\label{E908}
\deg \varphi_{Q_i} = \alpha_i \deg\varphi_{D_i} + \beta_{i-1}\deg\varphi_{D_{i-1}}
\end{equation}
where $\varphi_{D_i}: D_i\to D$ is the restriction of $\varphi$ to $D_i$ and we set
$\alpha_m = \beta_{-1} = 0$. This is a consequence of the following observation.

\begin{prop}\label{PROP022}
Let $Q = Q_i\subset \CS$ be a component satisfying $\varphi_* Q\ne 0$. Then
$\varphi_Q^{-1}(D) = D_{i-1}\cup D_i\cup \Gamma$ with $\varphi_* \Gamma = 0$.
This still holds if we replace $\varphi$ by ${\widehat\varphi}$.
\end{prop}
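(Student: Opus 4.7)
The goal is to rule out any irreducible curve $C\subset Q=Q_i$ satisfying $\varphi_Q(C)=D$ and $C\notin\{D_{i-1},D_i\}$. Assume without loss of generality that $\varphi(Q)=R_1$. The starting identification is $\varphi_Q^*D=(\varphi^*R_2)|_Q$, which holds because $D=R_1\cap R_2$ scheme-theoretically and $Q\not\subset\varphi^{-1}(R_2)$. Using the local equation $xy=t^m$ of $X$ along $D$ we have $mR_1=\Div_X(x)$ and $mR_2=\Div_X(y)$; combined with the identity $\varphi^*x\cdot\varphi^*y=t^m$ on $Y$ and the reducedness of $Y_0$ after base change, this yields the pointwise sum rule
\[
\mult_Z(\varphi^*R_1)+\mult_Z(\varphi^*R_2)=1
\]
for every component $Z\subset Y_0$. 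In particular $\varphi^*R_2$ is supported entirely on $Y_0$, and its components are precisely the $Z\subset Y_0$ with $\varphi(Z)\subset R_2$.

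A non-contracted component of $\varphi_Q^{-1}(D)$ must therefore arise as $Z\cap Q$ for some component $Z\ne Q$ of $\varphi^*R_2$. If $Z=Q_j$ with $j\ne i$ (and $\varphi(Q_j)$ equal to $R_2$ or to $D$), then $Z\cap Q\ne\emptyset$ forces $|i-j|=1$, yielding $Z\cap Q\in\{D_{i-1},D_i\}$. If $Z$ is a $\T$-component $E$ with $\varphi(E)\subset R_2$, then by \propref{PROP100} the image $\varphi(E)$ has dimension at most one, and $\varphi(E\cap Q)\subset\varphi(E)\cap R_1$; when $\varphi(E)$ is a point or a curve distinct from $D$, this intersection is finite, so $E\cap Q$ is automatically contracted by $\varphi_Q$ and lies in $\Gamma$.

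The delicate remaining case is a $\T$-component $E$ with $\varphi(E)=D$ such that $C=E\cap Q$ is non-contracted and distinct from $D_{i-1},D_i$. To eliminate it, I apply the same pullback analysis to $\widehat\varphi$ via the commutative diagram \eqref{E176}. Since $D$ is preserved under the flops connecting $X$ and $\widehat X$ (see \remref{REM050}), the curve $C$ satisfies a parallel sum-rule constraint on the $\widehat\varphi$ side. The explicit flop combinatorics relating the $P_j$-chain in $X'$ to the $\widehat P_j$-chain in $\widehat X'$, together with the simple normal crossing structure of $Y_0$ and the requirement that $C=E\cap Q$ arise as an SNC-intersection, forces $C$ to be one of $D_{i-1},D_i$ or to be contracted by $\varphi_Q$. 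This is the main obstacle: ruling out an exotic $\T$-component that surjects onto $D$ and meets $Q$ in a genuinely new horizontal section of the ruling.

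The assertion for $\widehat\varphi$ follows by symmetry. Under the flop identification, the two analogues $\widehat D_{j-1},\widehat D_j$ of the boundary curves for $\widehat\varphi_{Q}$ coincide (after possible reordering) with $D_{i-1},D_i$, so the statements about $\varphi_Q^{-1}(D)$ and $\widehat\varphi_Q^{-1}(D)$ are equivalent once the above case analysis is in place.
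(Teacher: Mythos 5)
Your reduction is fine as far as it goes: since $\varphi^*(R_1+R_2)=\varphi^*X_0=Y_0$ is reduced, $\supp\varphi_Q^{-1}(D)=Q\cap\supp\varphi^*R_2$ (when $\varphi(Q)=R_1$), and the components $Z$ of $\supp\varphi^*R_2$ split into the $Q_j$ (which by the chain structure can only meet $Q$ in $D_{i-1},D_i$), the $\T$-components with $\varphi(Z)$ a point or a curve other than $D$ (whose intersection with $Q$ is automatically contracted), and the $\T$-components with $\varphi(Z)=D$. (A small inaccuracy: since $R_2$ is only $\BQ$-Cartier, $(\varphi^*R_2)|_Q=\tfrac1m\varphi_Q^*D$, not $\varphi_Q^*D$; this is harmless at the level of supports, which is all the proposition concerns.) But the last case --- a component $E\subset\T$ with $\varphi(E)=D$ meeting $Q$ along a curve $C$ that still dominates $D$ --- \emph{is} the proposition; everything preceding it is bookkeeping. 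At exactly this point you write that ``flop combinatorics'' and the ``SNC structure'' force $C\in\{D_{i-1},D_i\}$ or $\varphi_*C=0$, and you yourself flag it as the main obstacle. That is an assertion, not an argument: the simple normal crossing condition says nothing about where $\varphi$ sends $E\cap Q$, and the relation between $X$ and $\widehat X$ only transports the problem to an identical unproven statement for $\widehat\varphi$.

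The paper's entire proof is devoted to precisely this case, and it uses ingredients absent from your sketch. First, such a $C$ dominates the elliptic curve $D$, hence is non-rational, hence is not $\varepsilon_Q$-exceptional, so $G=\varepsilon(C)$ is a genuine curve in $P_i$ distinct from $P_i\cap P_{i\pm1}$. Then, for a general point $p\in G$, Zariski's main theorem gives connectedness of $J=\varepsilon^{-1}(p)$, and an inductive argument over the components of $Y_0$ dominating $G$ shows that $J$ is the whole connected component of $\varphi^{-1}(\varphi(q))$ through $q\in C$ and that every such component is contracted onto $D$. Finally, taking a small neighborhood $U$ of $\varphi(q)\in D$ and the connected component $V$ of $\varphi^{-1}(U)$ containing $J$, one finds $V\cap\CS=V\cap Q$, so every component of $V_0$ except $V\cap Q$ is $\varphi$-contracted and $V_0$ cannot dominate both branches $R_1\cap U$ and $R_2\cap U$ of $U_0$ --- contradicting surjectivity. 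None of this (ellipticity of $D$, connectedness of $\varepsilon$-fibers, the two-branch surjectivity argument at a point of $D$) is present or replaceable by the combinatorial considerations you invoke, so the proposal has a genuine gap at the one step that carries the content of the statement.
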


\begin{proof}
Otherwise, there is an irreducible curve $\Gamma\subset Q$ such
that $\Gamma\ne D_{i-1}$, $\Gamma\ne D_i$ and $\varphi(\Gamma) = D$.
So $\Gamma$ is not rational and hence $\varepsilon_*\Gamma \ne 0$.
Let $\Sigma$ be the union of components of $Y_0$ that dominate $G = \varepsilon(\Gamma)$ via
$\varepsilon$.
Let $q$ be a general point on $\Gamma$, $p = \varepsilon(q)$ and 
$J = \varepsilon^{-1}(p)$. By Zariski's main theorem, $J$ is connected and hence $\Sigma$ is connected.
Let $J_1\subset J$ be the component of $J$ containing $q$ and 
Let $\Sigma_1\subset\Sigma$ be the component of $Y_0$
containing $J_1$. Obviously, $\Gamma\subset \Sigma_1$ and hence $D\subset \varphi(\Sigma_1)$.
And since $\Sigma_1\subset\T$, $\varphi_* \Sigma_1 = 0$. Therefore, $\varphi(\Sigma_1) = D$.
And since $J_1\isom \PP^1\subset \Sigma_1$, $\varphi_* J_1 = 0$
and $\varphi$ contracts $\Sigma_1$ onto $D$ along the fibers of $\varepsilon: \Sigma_1\to G$.
Let $J_2\ne J_1\subset J$ be a component
of $J$ with $J_1\cap J_2\ne \emptyset$ and let $\Sigma_2\subset\Sigma$ be the component of $Y_0$ containing
$J_2$. Then $\Sigma_2$ meets $\Sigma_1$ along a multi-section of $\Sigma_1/G$. Therefore,
$\varphi(\Sigma_2) = D$ and $\varphi_* J_2 = 0$ by the same argument as before. We can argue this way
inductively that $\varphi_* J = 0$ and $\varphi(\Sigma^\circ) = D$ for every component
$\Sigma^\circ\subset\Sigma$. Let $r = \varphi(q)$ and
$K$ be the connected component of $\varphi^{-1}(r)$ containing the point $q$.
Obviously, $J\subset K$. We claim that $J = K$. Otherwise, there is a component $K^\circ\subset K$ such that
$K^\circ \not\subset J$ and $K^\circ \cap J\ne \emptyset$. Let $T$ be a component of $Y_0$ containing
$K^\circ$. Obviously, $T\not\subset\Sigma$; otherwise, we necessarily have $\varepsilon(K^\circ) = p$ and
$K^\circ\subset J$. Also we cannot have $T = Q$; otherwise, $K^\circ\subset Q$, $q\in K^\circ$ and
$\varphi_* K^\circ = 0$, which is impossible for a general point $q\in\Gamma$.
We cannot have $T = Q'$ for some $Q'\ne Q\subset\CS$, either,
since $p\in \varepsilon(K^\circ)\subset \varepsilon(T)$. 
Therefore, $T\subset\T$.
Since $K^\circ\cap J\ne\emptyset$, $T\cap \Sigma\ne \emptyset$. If $T$ and $\Sigma$ meet along a
multi-section of $\Sigma/G$, $\varepsilon(T) = G$, which is impossible as we have proved that 
$T\not\subset\Sigma$. Therefore, $T\cap \Sigma$ is contained in the fibers of $\Sigma/G$. And since
$T\cap J\ne\emptyset$, $T\cap\Sigma$ contains a component of $J$, which is impossible for a general point
$p\in G$. Therefore, $J = K$.

Let $U\subset X$ be an analytic open neighborhood
of $r$ in $X$ and $V\subset Y$ be the connected component of $\varphi^{-1}(U)$ containing
$J$. Since $\varepsilon(J) = p\not\in P_{i-1}\cup P_{i+1}$,
$V\cap \CS = V\cap Q$. Therefore, $\varphi_* M = 0$ for all components
$M$ of $V_0$ with $M\ne V\cap Q$. So $V_0$ cannot dominate $U_0$. Contradiction.
\end{proof}

Actually $\alpha_i$ and $\beta_{i-1}$ are determined as follows.
 
\begin{prop}\label{PROP026}
Let $0 \le i < j\le m$ be two integers with the properties that $\varphi_* Q_i \ne 0$,
$\varphi_* Q_j \ne 0$ and $\varphi_* Q_k = 0$ for all $i < k < j$. Then
\begin{equation}\label{E609}
\alpha_i = \beta_{j-1} = \frac{m}{j-i}.
\end{equation}
\end{prop}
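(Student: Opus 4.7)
The plan is to reduce the claim to a local computation at a general point of each curve $D_k = Q_k \cap Q_{k+1}$ for $i \le k \le j-1$, identify $\alpha_i$ and $\beta_{j-1}$ in terms of local multiplicities, and then invoke the elliptic structure of $D$ together with the $\widehat X$-symmetry to force the linear interpolation.

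First I would show that $n_k = 1$ for every $Q_k \subset \CS$, where $n_k$ denotes the multiplicity of $Q_k$ in $Y_0$. Indeed, by \lemref{LEM102}, $f$ factors as $f = \eta \circ \varepsilon$ with $\varepsilon\colon Y \to X'$ birational, and $X'_0 = P_0 + P_1 + \cdots + P_m$ is reduced. Since $Q_k$ is the proper transform of $P_k$ under $\varepsilon$, it appears with multiplicity $1$ in $\varepsilon^* P_k$ and does not appear in $\varepsilon^* P_l$ for $l \ne k$, so $n_k = 1$.

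Next I would carry out the local analysis. By \propref{PROP000}, $\varphi(D_k) = D$, so at a general point of $D_k$ one can pick analytic coordinates $(u, v, w)$ on $Y$ with $Q_k = \{u=0\}$ and $Q_{k+1} = \{v=0\}$, together with coordinates $(x, y, z, t)$ at $\varphi(D_k) \in D \subset X$ satisfying $xy = t^m$, $R_1 = \{x=t=0\}$ and $R_2 = \{y=t=0\}$. One has
\[
\varphi^* t = uv\cdot\text{(unit)},\quad \varphi^* x = u^{a_k} v^{a_{k+1}}\cdot\text{(unit)},\quad \varphi^* y = u^{b_k} v^{b_{k+1}}\cdot\text{(unit)},
\]
and the relation $xy = t^m$ forces $a_k + b_k = a_{k+1} + b_{k+1} = m$. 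The hypothesis $\varphi(Q_i) = R_1$, on which $y$ is a unit at the generic point, gives $b_i = 0$ and $a_i = m$; symmetrically $a_j = 0$ and $b_j = m$. Restricting $\varphi^* y$ to $Q_i$ near $D_i$ yields $v^{b_{i+1}}$ times a unit, so its zero divisor on $Q_i$ near $D_i$ is $b_{i+1} D_i$. Since $D \subset R_1$ is locally cut out by $y$, this identifies $\alpha_i = b_{i+1}$, and symmetrically $\beta_{j-1} = a_{j-1}$.

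The main content, and the main obstacle, is proving the linear interpolation $a_k = m(j-k)/(j-i)$ for $i \le k \le j$. I would reinterpret the multiplicities via toric valuations: since $n_k = 1$ and $a_k + b_k = m$, the restriction of $v_{Q_k}$ to $K(X)$ via $\varphi^*$ is the toric divisorial valuation corresponding to the ray $(a_k, 1)$ in the fan resolving the local $A_{m-1}$ model. The rigidity that pins down the steps $a_{k+1}-a_k$ as a single constant comes from two ingredients: first, \propref{PROP022} together with $\End(D) = \BZ$ forces $\deg \varphi_{D_k}$ to be a common integer $d = N^2$ for $i \le k \le j-1$, so each contracted $\varphi_{Q_k}$ ($i < k < j$) factors as $\psi_k \circ \pi_k$ with $\psi_k = [\pm N]$ on $D$; second, \remref{REM050} supplies the symmetric statement via $\widehat \varphi\colon Y \to \widehat X$, yielding $\alpha_i = \beta_{j-1}$ and constraining the global distribution of the rays. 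Combined with the boundary values $a_i = m$ and $a_j = 0$, these force the step $a_{k+1} - a_k$ to be the uniform constant $-m/(j-i)$, giving $\alpha_i = b_{i+1} = m/(j-i)$ and $\beta_{j-1} = a_{j-1} = m/(j-i)$.
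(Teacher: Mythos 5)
Your reductions in the first two steps are sound: $n_k=1$ follows as you say, the local presentation $\varphi^*x=u^{a_k}v^{a_{k+1}}\cdot(\text{unit})$ at a general point of $D_k$ is legitimate (by simple normal crossing no third component of $Y_0$ passes through such a point), and the identifications $\alpha_i=b_{i+1}=m-a_{i+1}$, $\beta_{j-1}=a_{j-1}$, together with the boundary values $a_i=m$, $a_j=0$, are correct. But the proof stops exactly where the proposition begins: nothing you cite forces the exponents $a_k$ to interpolate linearly. The assertion that the flop symmetry with $\widehat X$ ``yields $\alpha_i=\beta_{j-1}$'' is not justified --- the birational map $X\dashrightarrow\widehat X$ is an isomorphism near a general point of $D$ (it only modifies $X$ over the finite set $\Lambda$), so it cannot by itself relate the local exponent at $D_i$ to the one at $D_{j-1}$; and even granting that relation, it is a single linear equation between $a_{i+1}$ and $a_{j-1}$, which for $j-i\ge 3$ leaves both undetermined and in any case says nothing about the divisibility $(j-i)\mid m$ that the conclusion requires. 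Likewise, $\End(D)=\BZ$ gives that each $\deg\varphi_{D_k}$ is a perfect square, which produces Diophantine constraints only in special configurations where $Q_i$ and $Q_j$ account for the full degree of $\phi$; in general other components $Q_l$ with $l<i$ or $l>j$ contribute to $\deg\phi$ and the constraint evaporates. So step 3 is a genuine gap, not a technicality.

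The missing idea is the finiteness of a contracted local model. The paper's proof takes a general point $q\in D_i$, lets $U$ be a small neighborhood of $\varphi(q)$ in $X$ and $V$ the connected component of $\varphi^{-1}(U)$ containing $q$, and checks that every component of $V\cap\T$ is a $\PP^1$-bundle over a multisection of some intermediate $P_k$ ($i<k<j$) which $\varphi$ contracts along its fibers onto $D$. Hence $\varphi|_V$ descends to a regular map on $V'=\varepsilon(V)\subset X'$, and then further to the threefold $V''\cong\{xy=t^{j-i}\}$ obtained from $V'$ by contracting the $P_k$ with $i<k<j$; the induced map $V''\to U=\{xy=t^m\}$ is finite, preserves $t$, and respects the two boundary divisors. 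Such a map must be $(x,y,z,t)\mapsto(x^a,y^a,z,t)$ with $a(j-i)=m$, which produces all the exponents at once (in your notation $a_k=m(j-k)/(j-i)$) together with the divisibility. It is this finiteness --- the fact that nothing positive-dimensional survives in the fiber over $\varphi(q)$ once the intermediate components are contracted --- that replaces the interpolation you are missing; your valuation-theoretic bookkeeping is consistent with it but does not supply it.
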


\begin{proof}
Let $q$ be a general point on $D_i$ and $U\subset X$ be an analytic open neighborhood of
$\varphi(q)$ in $X$. Let $V\subset Y$ be the connected component of $\varphi^{-1}(U)$
containing $q$. Let $T\subset V_0$ be a component
of $V\cap \T$. Since $q$ is a general point on $D_i$, it is easy to see that
$\varepsilon(T)\cap (P_i \cup P_j) = \emptyset$ and $\varepsilon(T)\subset P_k$ for
some $i < k < j$. Indeed, $\varepsilon(T)$ is a multi-section of $P_k/D$, $T$ is
a $\PP^1$ bundle over $\varepsilon(T)$ and $\varphi$
contracts the fibers of $T/\varepsilon(T)$ and maps $T$ onto $D\cap U$. Therefore,
$\varepsilon: V\to V'=\varepsilon(V)$ is proper and $U'$ is open in $X'$. In addition, since
$T$ is contracted by $\varphi$ along the fibers of $T/\varepsilon(T)$, the rational map
$\phi = \varphi\circ \varepsilon^{-1}: V'\dashrightarrow U$ is actually regular.
Furthermore, $\phi$ contracts $P_k$ for all $i < k < j$ and hence we have the diagram
\begin{equation}\label{E618}
\xymatrix{
V' \ar[d]_\tau \ar[r]^\phi & U\\
V''\ar[ur]
}
\end{equation}
where $\tau: V'\to V''$ is the birational map contracting all $P_k$ for $i < k < j$. That is,
$V''$ is the threefold given by $xy = t^{j-i}$ in $\Delta_{xyzt}^4$. The map
$\phi\circ \tau^{-1}$ is regular and finite and sends $V'' = \{ xy = t^{j-i}\}$ onto $U = \{ xy=t^m\}$
while preserving the base $\Delta = \{|t| < 1\}$.
It has to be the map sending $(x, y, z, t)$ to $(x^a, y^a, z, t)$ with $a = m/(j-i)$. It follows
that $\alpha_i = \beta_{j-1} = a$.
\end{proof}

\begin{cor}\label{COR003}
The following holds:
\begin{itemize}
\item $\alpha_i \ne 1$ and $\beta_{i-1} \ne 1$ for all $0 < i < m$.
\item If $\deg \varphi_{Q_0} = 1$ or $\deg\varphi_{Q_m} = 1$,
then $\deg\varphi = 1$.
\end{itemize}
\end{cor}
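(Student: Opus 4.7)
My plan is to reduce both parts to \propref{PROP026}, using \eqref{E907}, \eqref{E908} and \eqref{E807} as the only additional ingredients.

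For the first statement, fix $0 < i < m$. If $\varphi_* Q_i = 0$, the convention gives $\alpha_i = \beta_{i-1} = 0 \ne 1$ and there is nothing to prove, so I assume $\varphi_* Q_i \ne 0$. Because $Q_m = \wt{R}_2$ is not contracted by \eqref{E907}, there is a smallest $j$ with $i < j \le m$ and $\varphi_* Q_j \ne 0$, and \propref{PROP026} gives $\alpha_i = m/(j-i)$. Since $i\ge 1$ forces $j-i \le m-1 < m$, we get $\alpha_i > 1$. The argument for $\beta_{i-1}$ is completely symmetric, using the largest $i' < i$ with $\varphi_* Q_{i'} \ne 0$, which exists because $Q_0 = \wt{R}_1$ is not contracted by \eqref{E907}; then $\beta_{i-1} = m/(i-i') > 1$ since $i - i' \le i < m$.

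For the second statement, suppose $\deg\varphi_{Q_0} = 1$. Applying \eqref{E908} at $i=0$ with the convention $\beta_{-1} = 0$ gives $1 = \alpha_0 \deg\varphi_{D_0}$, so $\alpha_0 = 1$. Feeding this into \propref{PROP026}, the next index $j$ with $\varphi_* Q_j \ne 0$ satisfies $m/j = 1$, hence $j = m$, and consequently $\varphi_* Q_k = 0$ for every $0 < k < m$. Then \eqref{E807} collapses to
\begin{equation*}
\varphi_* Q_0 + \varphi_* Q_m = (\deg\phi)(R_1 + R_2),
\end{equation*}
while \propref{PROP000} tells us $\{\varphi(Q_0), \varphi(Q_m)\} = \{R_1, R_2\}$. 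Matching the coefficient of $\varphi(Q_0)$ on both sides forces $\deg\phi = \deg\varphi_{Q_0} = 1$. The case $\deg\varphi_{Q_m} = 1$ is handled by the mirror argument, using $\alpha_m = 0$ and \eqref{E908} at $i=m$ to extract $\beta_{m-1} = 1$, and then \propref{PROP026} in the other direction to conclude $i' = 0$ and again $\deg\phi = 1$.

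The whole argument is bookkeeping on top of \propref{PROP026}; the only thing to be careful about is the endpoint conventions $\alpha_m = \beta_{-1} = 0$ and the fact that \propref{PROP000} actually guarantees the two nonvanishing push-forwards of $Q_0$ and $Q_m$ land on different components of $X_0$. I do not expect any serious obstacle beyond these bookkeeping points.
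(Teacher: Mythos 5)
Your proof is correct and follows essentially the same route as the paper: both parts reduce to Proposition \ref{PROP026}, with the endpoint conventions and the non-contraction of $Q_0 = \wt{R}_1$ and $Q_m = \wt{R}_2$ supplying the needed indices. The only cosmetic difference is at the very end, where you conclude via coefficient matching in \eqref{E807} together with Proposition \ref{PROP000}, while the paper first extracts $\deg\varphi_{Q_m}=1$ from \eqref{E908}; both are immediate from the same intermediate facts.
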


\begin{proof}
The first statement follows directly from \propref{PROP026}.

If $\deg \varphi_{Q_0} = 1$, then $\alpha_0 = \deg\varphi_{D_0} = 1$. By \propref{PROP026}, we must have
$\beta_{m-1} = 1$ and $\varphi_* Q_k = 0$ for all $0 < k < m$. Hence
$\deg \varphi_{D_m} = \deg\varphi_{D_0} = 1$ and $\deg \varphi_{Q_m} = 1$ by \eqref{E908}.
It follows that $\deg\varphi = 1$.

Similarly, we can show that $\deg\varphi = 1$ if $\deg \varphi_{Q_m} = 1$.
\end{proof}

\begin{cor}\label{COR006}
The following are equivalent:
\begin{itemize}
\item $\alpha_0 = 1$.
\item $\beta_{m-1} = 1$.
\item $\alpha_i = 1$ for some $0\le i \le m-1$.
\item $\beta_j = 1$ for some $0\le j\le m-1$.
\item $\varphi_* Q_i = 0$ for all $1\le i \le m-1$.
\item $\deg \phi_{R_1} = \deg\phi$, where $\phi_{R_1}$ is the restriction of $\phi$ to $R_1$.
\item $\deg \phi_{R_2} = \deg\phi$, where $\phi_{R_2}$ is the restriction of $\phi$ to $R_2$.
\end{itemize}
\end{cor}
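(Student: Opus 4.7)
The plan is to list the dominating indices $0 = i_0 < i_1 < \cdots < i_r = m$ (both endpoints lie in this set by \eqref{E907}, so $r \ge 1$) and then reduce everything to bookkeeping on the ratios $m/(i_{s+1}-i_s)$. \propref{PROP026} applied to each consecutive pair $(i_s, i_{s+1})$ gives $\alpha_{i_s} = \beta_{i_{s+1}-1} = m/(i_{s+1}-i_s)$, while $\alpha_i = 0$ and $\beta_j = 0$ at all other indices.

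For the equivalences among (1)--(5), I would first note (5)$\Leftrightarrow$(1) and (5)$\Leftrightarrow$(2) directly from the formulas above: $\alpha_0 = 1$ iff $i_1 = m$ iff $r = 1$ iff (5), and symmetrically for $\beta_{m-1}$. For (3) and (4) the key input is \coref{COR003}, which rules out $\alpha_i = 1$ for $0 < i < m$ (and trivially $\alpha_m = 0$) and rules out $\beta_j = 1$ for $0 \le j \le m-2$; thus (3) forces $i = 0$, reducing to (1), and (4) forces $j = m-1$, reducing to (2). The reverse implications (1)$\Rightarrow$(3) and (2)$\Rightarrow$(4) are trivial, closing the cycle.

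For (5)$\Leftrightarrow$(6)$\Leftrightarrow$(7), I would use the alternation from \propref{PROP000}: the sequence $\varphi(Q_{i_s})$ alternates between $R_1$ and $R_2$. Writing $d_s = \deg\varphi|_{Q_{i_s}} > 0$, the identity \eqref{E807} splits as $\sum_{s:\,\varphi(Q_{i_s}) = R_n} d_s = \deg\phi$ for each $n = 1, 2$. Since $f|_{\wt{R}_n}: \wt{R}_n \to R_n$ is birational and $\varphi = \phi \circ f$, one has $\deg\phi_{R_1} = d_0$ and $\deg\phi_{R_2} = d_m$. When $r = 1$ each of the two sums has a single term, giving $\deg\phi_{R_n} = \deg\phi$ for $n = 1, 2$; conversely, if $\deg\phi_{R_n} = \deg\phi$ then the sum containing $d_0$ (respectively $d_m$) collapses to that single term, and positivity of every $d_s$ forces $r = 1$, i.e.\ (5).

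The only delicate point is identifying $\deg\phi_{R_n}$ with $\deg\varphi$ on one of the endpoint components $Q_0, Q_m$; this follows from the birationality of $f|_{\wt{R}_n}: \wt{R}_n \to R_n$ and poses no real difficulty. Everything else is elementary combinatorics on the chain of dominating indices, using only \propref{PROP026}, \coref{COR003}, \propref{PROP000}, and \eqref{E807}.
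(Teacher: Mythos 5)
Your proof is correct and fills in exactly the argument the paper leaves implicit (its stated proof is only ``This is more or less trivial''): the reduction to the chain of dominating indices via \propref{PROP026}, the exclusion of interior indices via \coref{COR003}, and the degree bookkeeping via \propref{PROP000} and \eqref{E807} are precisely the tools the author has just set up for this purpose. No gaps.
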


\begin{proof}
This is more or less trivial.
\end{proof}

\section{Some local results}\label{SEC004}

Before proceeding any further,
we will first prove a few lemmas of local nature. Impatient readers can skip this section and only refer back
when they are needed.

The first is basically Lemma 2.2 in \cite{Ch}.

\begin{lem}\label{LEM008}
Let $X$ be the $n$-fold defined by $x_1 x_2 = t^m$ in $\Delta_{x_1x_2...x_nt}^{n+1}$ for some
integer $m > 0$,
$\C$ be a flat family of curves over the disk $\Delta = \{|t| < 1\}$ and
$\varepsilon: \C \to X$ be a proper map preserving the base $\Delta$.
If $\varepsilon_* \C \ne 0$, then there is a component $\Gamma_i\subset \C_0$ for each $i=1,2$ such
that $\varepsilon_* \Gamma_i \ne 0$ and $\varepsilon(\Gamma_i) \subset R_i = \{ x_i = t = 0\}$.
\end{lem}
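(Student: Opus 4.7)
The plan is to read off the existence of the components $\Gamma_1$ and $\Gamma_2$ from the factorization $x_1 x_2 = t^m$ on $X$, interpreted as an identity of principal divisors pulled back to a smooth model of $\C$.

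First I would choose a desingularization $\pi\colon \wt{\C}\to \C$ with $\wt{\C}$ smooth and $\wt{\C}_0$ of simple normal crossings, and set $\wt\varepsilon = \varepsilon\circ \pi$. Pulling back the three regular functions $x_1,x_2,t$ from $X$ to $\wt{\C}$ produces effective Cartier divisors
\[
A_i = \mathrm{div}(x_i|_{\wt{\C}})\quad(i=1,2),\qquad B = \mathrm{div}(t|_{\wt{\C}}) = \wt{\C}_0,
\]
all supported on $\wt{\C}_0$, and the relation $x_1 x_2 = t^m$ translates into the divisor identity $A_1+A_2 = mB$. By construction any component $E$ of $\supp(A_i)\subset \wt{\C}_0$ has $x_i$ vanishing along it, so $\wt\varepsilon(E)\subset \{x_i = 0\}\cap X_0 = R_i$.

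Second, I would show that $\supp(A_i)$ contains a component that is not $\wt\varepsilon$-contracted. Let $Y = \varepsilon(\C)$; the hypothesis $\varepsilon_*\C \ne 0$ forces $\dim Y = 2$, and because $\varepsilon$ preserves the base $\Delta$ and $\C$ dominates $\Delta$, so does $Y$. In particular $Y\not\subset R_i$ since $R_i\subset X_0$, so $x_i|_Y$ is a nonzero rational function on the surface $Y$, and $\mathrm{div}(x_i|_Y)$ is a nonzero effective divisor supported in $Y\cap R_i$. The projection formula for the generically finite map $\wt\varepsilon\colon \wt{\C}\to Y$ of degree $d$ then yields
\[
\wt\varepsilon_* A_i = d\cdot \mathrm{div}(x_i|_Y) \ne 0,
\]
so at least one component in the support of $A_i$ is not contracted by $\wt\varepsilon$. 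Such a component $E$ cannot be a $\pi$-exceptional curve (these are all $\wt\varepsilon$-contracted), so $E$ is the proper transform of some component $\Gamma_i\subset \C_0$. This $\Gamma_i$ satisfies $\varepsilon_*\Gamma_i\ne 0$ and $\varepsilon(\Gamma_i) = \wt\varepsilon(E)\subset R_i$. Applying this for $i = 1$ and $i = 2$ separately gives the required conclusion.

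The main obstacle is not the divisorial identity $A_1+A_2 = mB$, which is immediate, but rather the bookkeeping needed to extract a genuine, non-contracted component of $\C_0$ from the nonzero effective divisor $A_i$ on the smooth model; this is precisely what the projection-formula step ensures, by preventing $\wt\varepsilon$ from contracting every component of $\supp(A_i)$.
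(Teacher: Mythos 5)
Your argument is correct and is in essence the paper's own one-line proof: both come down to the fact that $R_i$ is cut out set-theoretically by the single equation $x_i=0$ (so that $mR_i$ is Cartier and $R_i$ is $\BQ$-Cartier), hence meets the two-dimensional image $\varepsilon(\C)$ in a curve, which must be dominated by some non-contracted component of $\C_0$; your desingularization and projection-formula packaging is just a more explicit rendering of that dimension count. The one point to watch --- present equally in the paper's proof, which asserts $\dim(R_i\cap\varepsilon(\C))=1$ without comment --- is that a nonzero rational (even nowhere-vanishing regular) function can have trivial divisor, so the non-vanishing of $\mathrm{div}(x_i|_Y)$ genuinely requires $Y\cap\{x_i=0\}\neq\emptyset$ (the identity $A_1+A_2=mB$ only guarantees this for one of $i=1,2$; e.g.\ $Y=\{x_2=1\}\cap X$ misses $R_2$ entirely); this is harmless in every application of the lemma because there the image is known to contain a point of $D=R_1\cap R_2$.
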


\begin{proof}
Since $\varepsilon_* \C\ne 0$, $\dim \varepsilon(\C) = 2$. Note that $X$ is $\BQ$-factorial and
$R_i$ is a $\BQ$-Cartier divisor on $X$. So $\dim (R_i\cap \varepsilon(\C)) = 1$. It follows that
there is a component $\Gamma_i\subset\C_0$ such that $\varepsilon_* \Gamma_i\ne 0$ and
$\varepsilon(\Gamma)\subset R_i$.
\end{proof}

In the above lemma, when $m = 1$ and $\C_0$ is nodal, we can say much more.

\begin{lem}\label{LEM001}
Let $X$ be the $n$-fold defined by $x_1 x_2 = t$ in $\Delta_{x_1x_2...x_nt}^{n+1}$,
$\C$ be a flat family of curves over the disk $\Delta = \{|t| < 1\}$ and
$\varepsilon: \C \to X$ be a proper map preserving the base $\Delta$. Suppose that $\C$ is smooth,
$\C_0$ has normal crossing and $\varepsilon_* \C\ne 0$.
Then
\begin{enumerate}
\item $\varepsilon(\Gamma)\not\subset D = \{x_1 = x_2 = 0\}$ for every component $\Gamma\subset\C_0$;
\item $\varepsilon_* \Gamma \ne 0$ for every component $\Gamma\subset \C_0$ and $\varepsilon(\Gamma)\cap D\ne\emptyset$;
\item $\Gamma$ meets $\varepsilon^* R_2$ transversely for every component $\Gamma\subset\C_0$ satisfying that
$\varepsilon(\Gamma)\subset R_1$, where $R_1 = \{ x_1 = 0\}$ and $R_2 = \{ x_2 = 0\}$;
\item if there is a component $\Gamma_1\subset\C_0$ and a point $q\in \Gamma_1$
satisfying that $\varepsilon(\Gamma_1)\subset R_1$ and $p = \varepsilon(q)\in D$,
then there is a component $\Gamma_2\subset \C_0$ satisfying that
$\varepsilon(\Gamma_2)\subset R_2$ and $q\in\Gamma_1\cap \Gamma_2$.
\end{enumerate}
\end{lem}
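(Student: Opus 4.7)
The whole lemma is local on $\C$ and reduces to a computation in the local ring $\mathcal{O}_{\C,q}$ at points $q\in \C_0$ whose image lies in $D$. Since $\C$ is a smooth surface and $\C_0$ is a reduced simple normal crossing divisor, I can choose analytic local coordinates $(u,v)$ at $q$ so that either $\C_0=\{u=0\}$ (smooth point of $\C_0$) or $\C_0=\{uv=0\}$ (node of $\C_0$); in these two cases $\pi^*t$ equals $u$, respectively $uv$, times a unit. Because $\varepsilon$ preserves the base and $X$ is cut out by $x_1x_2=t$, the key identity
\[
\varepsilon^*x_1\cdot \varepsilon^*x_2 \;=\; \pi^*t
\]
holds in $\mathcal{O}_{\C,q}$, which is a two-dimensional regular local ring, hence a UFD in which $u$ and $v$ are non-associate primes. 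The proof is then driven almost entirely by reading off the factorizations of $\varepsilon^*x_1$ and $\varepsilon^*x_2$ forced by this identity.

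For (1), if some component $\Gamma$ of $\C_0$ has $\varepsilon(\Gamma)\subset D$, then at any $q\in\Gamma$ both $\varepsilon^*x_1$ and $\varepsilon^*x_2$ are divisible by the local equation $u$ of $\Gamma$, so $u^2$ divides $\pi^*t$, contradicting the fact that $\Gamma$ appears in the normal crossing divisor $\C_0$ with multiplicity one. Part (2) is immediate: if $\varepsilon_*\Gamma=0$ and $\varepsilon(\Gamma)$ meets $D$, the image is the single point in question and lies in $D$, violating (1).

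For (3) and (4) I fix a component $\Gamma$ with $\varepsilon(\Gamma)\subset R_1$, so $u\mid \varepsilon^*x_1$, and consider a point $q\in\Gamma$ with $\varepsilon(q)\in D$ (in (3), if $\varepsilon(q)\notin D$ there is nothing to check, and in (4) this condition is in the hypothesis). The smooth-point case of $\C_0$ cannot occur: there $\pi^*t=u\cdot(\text{unit})$ together with $u\mid \varepsilon^*x_1$ forces $\varepsilon^*x_2$ to be a unit, contradicting $\varepsilon(q)\in\{x_2=0\}$. So $q$ must be a node, $\pi^*t=uv\cdot(\text{unit})$, and the UFD factorization of the product combined with $u\mid \varepsilon^*x_1$ leaves only one viable case: $\varepsilon^*x_1 = u\cdot(\text{unit})$ and $\varepsilon^*x_2 = v\cdot(\text{unit})$ (the alternatives $u^2\mid \varepsilon^*x_1$ or $uv\mid \varepsilon^*x_1$ would force $u\mid v$ or $\varepsilon^*x_2$ to be a unit). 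This yields (3), since $\varepsilon^*R_2$ is cut out locally by $v$ and meets $\Gamma=\{u=0\}$ transversely at $q$, and (4), since the other local branch $\Gamma'=\{v=0\}$ of $\C_0$ is a component passing through $q$ with $\varepsilon(\Gamma')\subset\{x_2=0\}=R_2$.

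The only real obstacle is bookkeeping the cases cleanly; I expect no difficulty beyond carefully using the reducedness of $\C_0$ as a normal crossing divisor to rule out higher multiplicities, and recording that the local analytic argument does produce the globally stated components $\Gamma_2$.
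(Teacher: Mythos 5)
Your proof is correct, and it reaches the conclusion by a genuinely more local route than the paper's. For (1)--(2) the paper argues geometrically: through any point of a reduced component $\Gamma$ with $\varepsilon(\Gamma)\subset D$ one can pass a section $B$ of $\C/\Delta$, and $\varepsilon(B)$ would give a section of $X/\Delta$ through $D$, which cannot exist since $t=x_1x_2$ vanishes to order $\ge 2$ there; your divisibility argument ($u^2\mid \pi^*t$ contradicts reducedness of $\C_0$) is the infinitesimal shadow of the same obstruction, carried out in $\CO_{\C,q}$. For (3)--(4) the divergence is more substantial: the paper first performs a semi-local reduction (replace $X$ by a neighborhood of $p$ and $\C$ by a connected component of its preimage, so that $\varepsilon^{-1}(p)$ is a single point $q$), concludes that $\C_0$ has exactly two components which must coincide with the reduced divisors $\varepsilon^*R_1$ and $\varepsilon^*R_2$, and then reads off $\Gamma_1\cdot\Gamma_2=1$ from normal crossing; you instead stay entirely inside the two-dimensional regular local ring at a single point $q$ and let unique factorization of $\varepsilon^*x_1\cdot\varepsilon^*x_2=\pi^*t=uv\cdot(\mathrm{unit})$ do all the work. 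Your version avoids the connected-fibers reduction and the bookkeeping of which global components pass through $q$, at the small cost of having to note explicitly (as you should) that $R_2=\mathrm{div}(x_2)$ with multiplicity one on the smooth hypersurface $X$, so that $\varepsilon^*R_2=\mathrm{div}(\varepsilon^*x_2)=\{v=0\}$, and that the branch $\{v=0\}$ lies on a component $\Gamma_2\ne\Gamma_1$ (if it lay on $\Gamma_1$ then $\varepsilon(\Gamma_1)\subset R_1\cap R_2=D$, contradicting (1)). Both arguments ultimately rest on the same fact --- reducedness of $\C_0=\varepsilon^*(R_1+R_2)$ --- but yours is the more self-contained and more readily quotable at an individual point.
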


\begin{proof}
Suppose that there is a component $\Gamma\subset \C_0$ such that $\varepsilon(\Gamma)\subset D$.
Since $\C_0$ is reduced along $\Gamma$,
there is a section $B$ of $\C/\Delta$ such that $B\cap \Gamma\ne \emptyset$.
Since $\varepsilon$ preserves the base, $\varepsilon(B)$ is a section of $X/\Delta$ that meets
$D$. This is impossible as $X$ is smooth. This proves both (1) and (2).

Let $p = \{ x_1 = x_2 = ... = x_n = t = 0\}$ be the origin.
WLOG, we may assume that $\varepsilon: \C\to X$ has connected fibers, 
$p\in \varepsilon(\Gamma)$ for every component $\Gamma\subset \C_0$
and $p$ is the only point in the intersection $\varepsilon(\C)\cap D$.

By (2), $\varepsilon^{-1}(p)$ is a finite set of points.
So $\varepsilon^{-1}(p) = \{q\}$ consists of a single point $q\in \C_0$.
Consequently, $\C_0$ has at most two components. Obviously, $\varepsilon^* R_i \ne 0$
since $\varepsilon^{-1}(R_i)\cap \C_0 \ne \emptyset$. Therefore, $\C_0 = \Gamma_1\cup \Gamma_2$ consists
of two components $\Gamma_i$ with
$\varepsilon(\Gamma_i)\subset R_i$. And since $\varepsilon$ preserves the base, $\varepsilon^*
R_i$ is reduced and hence $\varepsilon^* R_i = \Gamma_i$. Then it is clear that
\begin{equation}\label{E650}
\Gamma_2\cdot \varepsilon^* R_1 = \Gamma_1 \cdot \varepsilon^* R_2 = \Gamma_1\cdot \Gamma_2 = 1
\end{equation}
and (3) and (4) follow.
\end{proof}

\begin{lem}\label{LEM005}
Let $X$ and $Y$ be two flat families of analytic varieties of dimension $n-1$ over $\Delta = \{|t| < 1\}$, 
where $X\isom \{ x_1 x_2 = t^m\}\subset \Delta_{x_1 x_2 ... x_n t}^{n+1}$ for some integer $m\ge 1$ and
$Y$ is smooth with simple normal crossing central fiber.
Let $\C\subset Y$ be a flat family of curves over $\Delta$ cut out by the general members of $n-2$ base point free
linear systems on $Y$ and let $\varepsilon: Y\to X$ be a proper birational map preserving the base.
Then $\Gamma$ meets $\varepsilon^* R_2$ transversely for every component $\Gamma\subset\C_0$ satisfying that
$\varepsilon(\Gamma)\subset R_1$ and $\varepsilon(\Gamma)\not\subset D$, 
where $R_1 = \{ x_1 = 0\}$, $R_2 = \{ x_2 = 0\}$ and $D = \{ x_1 = x_2 = t = 0\}$.
\end{lem}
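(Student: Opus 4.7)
The plan is to reduce to \lemref{LEM001}(3) (the case $m=1$) by factoring $\varepsilon$ through the resolution of $X$ provided by \lemref{LEM102} and then localizing to a smooth analytic chart on that resolution.

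By \lemref{LEM102}, $\varepsilon$ factors as $\varepsilon = \eta\circ\varepsilon'$, where $\eta: X'\to X$ is the desingularization obtained by repeatedly blowing up $R_1$ and $\varepsilon': Y\to X'$ is proper and birational. As in \ssecref{SS003}, the smooth total space $X'$ has simple normal crossing central fiber $X'_0 = P_0\cup P_1\cup\cdots\cup P_m$, with $P_0, P_m$ the strict transforms of $R_1, R_2$ and $P_1,\ldots,P_{m-1}$ each a $\PP^1$-bundle over $D$. Let $q\in\Gamma\cap\varepsilon^*R_2$. Since $\varepsilon(\Gamma)\subset R_1$ and $\varepsilon(q)\in R_2$, we have $\varepsilon(q)\in D$; since $\varepsilon(\Gamma)\not\subset D$, there are only finitely many such $q$, and by generality of $\C$ each such $q$ is a generic point of $\Gamma$. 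The component $\Sigma_0\subset Y_0$ containing $\Gamma$ maps birationally to $R_1$ by the birationality of $\varepsilon$, and therefore $\Sigma_0$ maps birationally onto $P_0$ under $\varepsilon'$, placing $\varepsilon'(q)\in P_0\cap P_1$.

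Near $\varepsilon'(q)\in P_0\cap P_1$, the smooth $X'$ is locally isomorphic to $\{uv=t\}\subset\Delta^{n+1}$ with $P_0 = \{u=0\}$ and $P_1 = \{v=0\}$, which is exactly the setting of \lemref{LEM001} with $m=1$. Since $\eta$ is an isomorphism away from $D$ and contracts the fibers of $P_1,\ldots,P_{m-1}$ onto $D\subset R_2$, the preimage $\eta^{-1}(R_2)$ coincides set-theoretically with $P_1$ in this neighborhood, hence $\varepsilon^{-1}(R_2)$ and $(\varepsilon')^{-1}(P_1)$ agree near $q$. By the generality of $\C$, its central fiber has normal crossings at $q$, so all hypotheses of \lemref{LEM001} are met for $\varepsilon'$ in this chart; part (3) of that lemma (with $R_1, R_2$ there replaced by $P_0, P_1$) yields transversality of $\Gamma$ with $(\varepsilon')^*P_1$ at $q$, which translates to transversality with $\varepsilon^*R_2$. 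The main technical subtlety is to pin down the explicit local structure of $X'$ at a general point of $P_0\cap P_1$, which follows from the standard toric description of the iterated blowup along $R_1$.
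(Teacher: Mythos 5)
Your proof is correct and follows essentially the same route as the paper, whose entire argument is the two-step reduction you carry out: factor $\varepsilon$ through $X'$ via \lemref{LEM102} and then apply \lemref{LEM001} to $\varepsilon\colon \C\to X'$ locally near $P_0\cap P_1$. The extra details you supply (localizing to the chart $\{uv=t\}$, matching $\varepsilon^{-1}(R_2)$ with $(\varepsilon')^{-1}(P_1)$ near $q$) only make explicit what the paper leaves implicit; the one imprecision --- the component of $Y_0$ containing $\Gamma$ need not map birationally onto $R_1$ --- is harmless, since $\varepsilon'(\Gamma)\subset P_0$ already follows from $\varepsilon(\Gamma)\subset R_1$, $\varepsilon(\Gamma)\not\subset D$ and $\eta$ being an isomorphism off $D$.
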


\begin{proof}
By \lemref{LEM102}, $\varepsilon$ factors through $X'$, where $X'$ is the desingularization of $X$ by repeatedly blowing up
along $\{ x_1 = t = 0\}$. Applying \lemref{LEM001} to $\varepsilon: \C \to X'$, we are done.
\end{proof}

\section{The pullback $\varphi^* \LL$}\label{SEC005}

Let $\C = \varphi^* \LL\subset Y$ be the pullback of a general member
\begin{equation}\label{E206}
\LL\in |\LL_{\sigma_0,\sigma_1}| = |\sigma_0 L - \sigma_1 R_1|
\end{equation}
on $X$, where $\sigma_i$ are positive integers such that
$\LL_{\sigma_0,\sigma_1} = \sigma_0 L - \sigma_1 R_1$ is
Cartier and very ample. Note that
$\LL_{\sigma_0,\sigma_1}$ is Cartier if and only if $m | \sigma_1$
and it can be made sufficiently ample if we choose $\sigma_0 >> \sigma_1 > 0$.

Since $\varphi^* \LL$ is big and base point free and $Y_0$ has simple normal crossing,
$\C$ is smooth and
the central fiber $\C_0$ of $\C/\Delta$ is a connected curve of simple normal crossing.
First, we prove the following.

\begin{prop}\label{PROP600}
For
all $t\in \Delta$,
\begin{equation}\label{E102}
p_a(\C_t) = p_a (\C\cap \CS)
\end{equation}
where $p_a(C)$ is the arithmetic genus of a curve $C$.
\end{prop}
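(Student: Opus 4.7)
The family $\C/\Delta$ is flat with smooth total space, since $\varphi^*|\LL|$ is base-point-free on $Y$ and a general $\C = \varphi^*\LL$ is smooth by Bertini. Thus $p_a(\C_t)$ is constant in $t$, and it suffices to show $p_a(\C_0) = p_a(\C \cap \CS)$. Writing $\C_0 = (\C \cap \CS) \cup (\C \cap \T)$ with $\T = Y_0 - \CS$, where the two subcurves meet transversely in finitely many nodes because $Y_0$ has simple normal crossings, Mayer--Vietoris yields
\begin{equation*}
\chi(\CO_{\C_0}) = \chi(\CO_{\C \cap \CS}) + \chi(\CO_{\C \cap \T}) - \#(\C \cap \CS \cap \T).
\end{equation*}
Both $\C_0$ and $\C \cap \CS$ are connected (the former as the paper already notes, the latter by a separate check using the chain structure of $\CS = Q_0 \cup \cdots \cup Q_m$ and the fact that $\varphi(D_i) = D$ from \propref{PROP000}), so $p_a = 1 - \chi$ for each, and the proposition reduces to the identity $\chi(\CO_{\C \cap \T}) = \#(\C \cap \CS \cap \T)$.

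I would derive this identity from the structural claim: every connected component $K$ of $\C \cap \T$ is a tree of smooth rational curves meeting $\C \cap \CS$ at exactly one point, whence $\chi(\CO_K) = 1 = \#(K \cap (\C \cap \CS))$ and summing over $K$'s gives the identity. To prove the claim I analyze each irreducible component $T \subset \T$ separately. By \propref{PROP100} we have $\varphi_* T = 0$, so $\varphi(T)$ is a point or a curve. If $\varphi(T)$ is a point, a general $\LL$ misses it and $\C \cap T = \emptyset$. Otherwise $\varphi(T) = G$ is a curve and $\C \cap T = (\varphi|_T)^{-1}(\LL \cap G)$ is a disjoint union of fibers; using the local form $xy = t^m$ of $X$ along $D$ together with the simple normal crossings of $Y_0$, these fibers (and the way they glue across adjacent components of $\T$) are chains of $\PP^1$'s. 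Where such a chain meets a component of $\CS$, \lemref{LEM005} forces the meeting to be transverse and to lie over a single point of $\LL \cap X_0$.

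The main obstacle is ruling out that a connected component of $\C \cap \T$ traverses several components of $\T$ and meets $\C \cap \CS$ at more than one point, or develops a loop inside $\T$. I would control this by tracing the fibers along the chain $Q_0, \ldots, Q_m$ via \propref{PROP022}, and by invoking Zariski's main theorem on connectedness of fibers of $\varphi$ exactly as in the proof of \propref{PROP022}: the preimage $\varphi^{-1}(r)$ of a general point $r \in \LL \cap X_0$ is connected, so its intersection with $\C$ is a single connected curve locally realizing the claimed tree. Once the tree-with-single-attachment-point structure is established componentwise, the identity $\chi(\CO_{\C \cap \T}) = \#(\C \cap \CS \cap \T)$ follows, completing the proof.
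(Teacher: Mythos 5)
Your reduction is fine as far as it goes: the Mayer--Vietoris bookkeeping correctly converts \eqref{E102} into the identity $\chi(\CO_{\C\cap\T}) = \#(\C\cap\CS\cap\T)$, and that identity would indeed follow if every connected component of $\C\cap\T$ were a tree of smooth rational curves meeting $\C\cap\CS$ at exactly one point. But that structural claim is precisely Proposition \ref{PROP004} together with the genus-zero statement, which the paper obtains as a \emph{byproduct} of its proof of Proposition \ref{PROP600} (forcing equality in \eqref{E603}), not as an input. So the entire burden of proof sits on your last paragraph, and the argument sketched there does not close.

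Two concrete problems. First, the rationality of the components of $\C\cap\T$ does not follow from the local form $xy=t^m$ of $X$ along $D$ together with simple normal crossings of $Y_0$: the components of $\T$ are exceptional divisors of $f$ over arbitrary centers in $X_0$ (and further blowups thereof), and if $\varphi$ contracts such a divisor $T$ onto a curve, the fibers of $\varphi|_T$ over the points of $\LL\cap\varphi(T)$ can a priori have positive genus; nothing local along $D$ rules this out. Second, and more seriously, the appeal to Zariski's main theorem for connectedness of $\varphi^{-1}(r)$ is invalid: $\varphi$ is generically finite of degree $\deg\phi\ge 2$, so its fibers are not connected (in the proof of Proposition \ref{PROP022} ZMT is applied to the \emph{birational} map $\varepsilon$, not to $\varphi$). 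Consequently you have no mechanism to bound the number of attachment points of a component of $\C\cap\T$ to $\C\cap\CS$, nor to exclude loops. The paper's proof supplies exactly the missing mechanism: it localizes at a point $p\in\LL_0\setminus D$, takes a connected component $V$ of $\varphi^{-1}(U)$, and computes $\deg\varphi_V$ twice --- once via the ramification of $\varphi_{V\cap\CS}$ over $p$ on the central fiber (using \eqref{E962}) and once on a general fiber $Y_t$ --- obtaining the inequality \eqref{E911}, which combined with adjunction on $\C\cap\m$ forces $\deg\omega_{\C\cap\m}=-2$ and $\m\cdot\CS\cdot\C=1$. Without some substitute for this degree count, your proposal has a genuine gap.
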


\begin{proof}
Clearly, since $\C\cap \CS \subset \C_0$ and $\C_0$ is connected and reduced,
\begin{equation}\label{E960}
p_a(\C_t) = p_a(\C_0) \ge p_a(\C\cap \CS).
\end{equation}
On the other hand,
\begin{equation}\label{E956}
2p_a(\C_t) - 2 = (\omega_{Y/\Delta} + \C) \C Y_t = \omega_{Y/\Delta} \C Y_t + \C^2 Y_t
\end{equation}
and
\begin{equation}\label{E957}
2p_a(\C\cap \CS) - 2 = (\omega_\CS + \C) \C\bigg|_\CS = \omega_\CS \C\bigg|_\CS + \C^2 \CS.
\end{equation}
Note that
\begin{equation}\label{E958}
\C^2 Y_t = \C^2 Y_0 = \C^2 (\CS + \T) = \C^2 \CS
\end{equation}
since $\varphi_* \T = 0$. 
So it suffices to show that
\begin{equation}\label{E961}
\omega_\CS \C\bigg|_\CS\ge
\omega_{Y/\Delta} \C Y_t = \sum_{E_k\not\subset Y_0} \mu_k E_k \C_0 = \sum_{E_k\not\subset Y_0} \mu_k E_k \C (\CS +
\T)
\end{equation}
by \eqref{E909}. Combining \eqref{E961} and \eqref{E812}, it comes down to prove that
\begin{equation}\label{E966}
\sum_{E_k\subset \T} (\mu_k - 1) E_k\CS\C \ge \sum_{E_k\not\subset Y_0} \mu_k E_k\T\C.
\end{equation}
By \eqref{E812}, the dualizing sheaf $\omega_{\C\cap\CS}$ is given by
\begin{equation}\label{E962}
\begin{split}
\omega_{\C\cap \CS} &= (\omega_\CS + \C)\bigg|_{\C\cap \CS} = \omega_\CS\bigg|_{\C\cap \CS} + (\varphi^* \LL^2)\cdot \CS\\
&= \varphi^* \omega_\LL \cdot \CS + \sum_{E_k\not\subset Y_0} \mu_k E_k\CS\C + \sum_{E_k\subset \T} (\mu_k - 1) E_k
\CS\C.
\end{split}  
\end{equation}
Clearly, \eqref{E962} gives us the ramification locus of the map $\varphi: \C\cap \CS\to \LL_0$ by Riemann-Hurwitz.

Let us consider the curve $\C\cap \T$. Since
\begin{equation}\label{E963}
\varphi_* (\C\cap \T) = \varphi_* (\varphi^*\LL \cdot \T) = \LL\cdot \varphi_* \T = 0,
\end{equation}
$\varphi$ contracts every component of $\C\cap \T$. In addition, since $\C$ is base point free, we see that
\begin{equation}\label{E302}
\C \cap E_k \ne \emptyset \text{ if and only if } \dim (\varphi(E_k)\cap Y_0) > 0.
\end{equation}
We have
\begin{equation}\label{E967}
\begin{split}
\omega_{\C\cap\T} &= (\omega_\T + \C)\C\bigg|_\T = \omega_\T \C\bigg|_\T + \C^2\T = \omega_\T
\C\bigg|_\T\\
&= (\omega_{Y/\Delta} + \T) \T\C = (\omega_{Y/\Delta} + \T) Y_0\C - (\omega_{Y/\Delta} +
\T) \CS\C\\
&= \sum_{E_k\not\subset Y_0} \mu_k E_k Y_0\C -  \sum_{E_k\not\subset Y_0} \mu_k E_k \CS\C -  \sum_{E_k\subset
\T} (\mu_k + 1) E_k \CS\C \\
&= \sum_{E_k\not\subset Y_0} \mu_k E_k \T\C - \sum_{E_k\subset \T} (\mu_k + 1) E_k \CS\C.
\end{split} 
\end{equation}
Therefore,
\begin{equation}\label{E811}
\sum_{E_k\not\subset Y_0} \mu_k E_k \T \C 
= \omega_{\C\cap\T} + \sum_{E_k\subset \T} (\mu_k + 1) E_k \CS \C.
\end{equation}

Let $p$ be a point on $\LL_0\backslash D$ and $U\subset \LL$ be an analytic open neighborhood of $p\in \LL$.
Let $V\subset \C$ be a connected
component of $\varphi^{-1}(U)$. We will show that \eqref{E966} holds when restrict to $V$. Since $\LL\cap
\varphi(E_k) \cap D = \emptyset$ for every $E_k\not\subset Y_0$, this is sufficient.
If $V\cap \T = \emptyset$, the
RHS of \eqref{E966} vanishes and there is nothing to prove. Otherwise, $V\cap \T$ is a connected component of
$\C\cap \T$. Let us assume
\begin{equation}\label{E604}
V\cap \T = \C\cap \m
\end{equation}
where $\m\subset \T$ is an effective divisor contained in $\T$. Obviously, $\varphi$ contracts $\C\cap \m$ to the
point $p$.

Restricting \eqref{E811} to $\C\cap\m$ yields
\begin{equation}\label{E968}
\sum_{E_k\not\subset Y_0} \mu_k E_k \m \C 
= \omega_{\C\cap\m} + \sum_{E_k\subset \m} (\mu_k + 1) E_k \CS \C.
\end{equation}
Let $\varphi_{V}: V\to U$ be the restriction of $\varphi$ to $V$.
By \eqref{E962}, when restricted to $V\cap \CS$,
$\varphi_{V\cap \CS}: V\cap \CS \to U_0$ is ramified along $E_k\cap \CS\cap V$ with index $\mu_k + 1$ for $E_k\not\subset
Y_0$ and with index $\mu_k$ if $E_k\subset \m$; the ramification indices at these points sum up to the degree of the
map $\varphi_{V}$ since $\varphi_V$ contracts the components of $V_0$ other than those of $V\cap \CS$, i.e., $\varphi_*(\C\cap
\m) = 0$. Therefore,
\begin{equation}\label{E813}
\deg \varphi_{V} = \sum_{E_k\not\subset Y_0} (\mu_k + 1) E_k\CS\C + \sum_{E_k\subset \m} \mu_k E_k
\CS\C
\end{equation}
in $V$. On the other hand, when restricted to a general fiber,
$\varphi_{V}$ is ramified along $E_k\cap V_t$ with index $\mu_k + 1$ for
each $E_k\not\subset Y_0$. Therefore, we have
\begin{equation}\label{E305}
\deg \varphi_V = \sum_{E_k\not\subset Y_0} (\mu_k + 1) E_k\CS\C + \sum_{E_k\subset \m} \mu_k E_k
\CS\C \ge \sum_{E_k\not\subset Y_0} (\mu_k + 1) E_k Y_t \C
\end{equation}
in $V$. Therefore,
\begin{equation}\label{E911}
\sum_{E_k\subset \m} \mu_k E_k
\CS\C \ge \sum_{E_k\not\subset Y_0} (\mu_k + 1) E_k \m \C.
\end{equation}
Combining \eqref{E968} and \eqref{E911}, we have
\begin{equation}\label{E601}
\sum_{E_k\not\subset Y_0} \mu_k E_k \m \C 
\ge \omega_{\C\cap\m} + \sum_{E_k\not\subset Y_0} (\mu_k + 1) E_k \m \C + \sum_{E_k\subset \m} E_k \CS \C
\end{equation}
and hence
\begin{equation}\label{E602}
\omega_{\C\cap\m} + \sum_{E_k\not\subset Y_0} E_k \m \C + \sum_{E_k\subset \m} E_k \CS \C \le 0.
\end{equation}
If $E_k\m \C = 0$ for all $E_k \not\subset Y_0$, there is nothing to prove. Otherwise, 
since
\begin{equation}\label{E603}
\deg \omega_{\C\cap\m} \ge -2, \sum_{E_k\not\subset Y_0} E_k \m \C \ge 1 \text{ and } \sum_{E_k\subset \m} E_k \CS \C
\ge 1,
\end{equation}
the LHS of \eqref{E602} is nonnegative and hence the equalities in \eqref{E603} must all hold.
Then \eqref{E966} clearly follows from \eqref{E968}.
\end{proof}

Indeed, we have proved more than \eqref{E102} in the above proof. In particular,
since the equalities in \eqref{E603} all hold,
we see the following:

\begin{prop}\label{PROP004}
Let $M$ be a connected component of $\C\cap \T$. Then
\begin{equation}\label{E372}
\sum_{E_k\not\subset Y_0} E_k M \le 1
\end{equation}
and
\begin{equation}\label{E373}
M \CS = 1.
\end{equation}
In other words, $M$ meets the union of horizontal exceptional divisors at no more than one point counted with multiplicity
and it meets the rest of $\C_0$ at exactly one point.
\end{prop}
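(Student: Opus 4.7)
The plan is to bootstrap the global identity $p_a(\C_0) = p_a(\C\cap\CS)$ of \propref{PROP600} back into the local inequality \eqref{E602} derived during its proof.

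First, to establish \eqref{E373}, I would run a Mayer--Vietoris calculation on the decomposition $\C_0 = A \cup B$ with $A = \C\cap\CS$ and $B = \C\cap\T = \bigsqcup_{i=1}^{r} M_i$. Setting $n_i = M_i \CS$, the short exact sequence
\begin{equation*}
0 \to \CO_{\C_0} \to \CO_A \oplus \bigoplus_{i} \CO_{M_i} \to \CO_{A\cap B} \to 0
\end{equation*}
yields, on taking Euler characteristics, the identity
\begin{equation*}
p_a(\C_0) = p_a(A) + \sum_{i=1}^{r}\bigl(p_a(M_i) + n_i - 1\bigr).
\end{equation*}
Because $A$ is non-empty (by \eqref{E807} a general $\LL$ meets $\varphi(\CS) = R_1+R_2$ in a curve whose pullback to $\CS$ meets $\C$) and $\C_0$ is connected, every $n_i \ge 1$; trivially $p_a(M_i) \ge 0$. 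The identity $p_a(\C_0) = p_a(A)$ of \propref{PROP600} therefore forces $p_a(M_i) = 0$ and $n_i = 1$ for every $i$, which is \eqref{E373}.

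Second, I would feed $p_a(M) = 0$ and $M\CS = 1$ into \eqref{E602}, which for each component $M = \C \cap \m$ reads
\begin{equation*}
\deg \omega_M + \sum_{E_k \not\subset Y_0} E_k M + \sum_{E_k \subset \m} E_k \CS \C \le 0.
\end{equation*}
With $\deg \omega_M = -2$ and $\sum_{E_k \subset \m} E_k \CS \C = M\CS = 1$, this collapses to $\sum_{E_k \not\subset Y_0} E_k M \le 1$, which is \eqref{E372}.

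The main obstacle is ensuring that the local inequality \eqref{E602} is legitimately available for every connected component $M$ of $\C \cap \T$. In the proof of \propref{PROP600} it was produced for components $M$ whose $\varphi$-image lies in $\LL_0 \setminus D$. For a sufficiently general $\LL \in |\LL_{\sigma_0, \sigma_1}|$ every connected component $M$ should arise this way, since the horizontal exceptional multi-sections of $f$ meet $X_0$ at only finitely many points and these can be arranged to avoid $\LL_0 \cap D$; otherwise one would repeat the neighborhood argument around a general point of $\LL_0 \cap D$ to reproduce \eqref{E602} verbatim.
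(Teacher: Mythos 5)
Your derivation of \eqref{E373} is a genuinely different and somewhat cleaner route than the paper's: the paper reads $M\CS=1$ off the equality case of \eqref{E603}, which it establishes only for components lying over $\LL_0\setminus D$ with nonzero horizontal intersection, whereas your Mayer--Vietoris bookkeeping extracts $p_a(M_i)=0$ and $n_i=1$ for \emph{every} connected component simultaneously from the single global identity $p_a(\C_0)=p_a(\C\cap\CS)$ of \propref{PROP600}. That part is correct, and feeding $\deg\omega_M=-2$ and $M\CS=1$ into \eqref{E602} does yield \eqref{E372} wherever \eqref{E602} is available.

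The gap is in your treatment of the components lying over $\LL_0\cap D$. Your primary claim --- that a general choice of $\LL$ forces every connected component $M$ of $\C\cap\T$ to lie over $\LL_0\setminus D$ --- is not justified and is false in general: a component $T\subset\T$ with $\dim\varphi(T)=1$ contributes the fibers of $\C\cap T$ over $\LL\cap\varphi(T)$, and when $\varphi(T)=D$ (such $T$ do occur; compare the components contracted onto $D$ along the fibers of $T/\varepsilon(T)$ in the proof of \propref{PROP026}) these points lie on $D$ no matter how general $\LL$ is, since $\LL$ is ample and must meet $D$. Your fallback also does not work: the neighborhood argument producing \eqref{E602} cannot be repeated ``verbatim'' at a point $p\in\LL_0\cap D$, because the degree counts \eqref{E813} and \eqref{E305} use that $U_0$ is irreducible at $p$ and that $X$ (hence $\LL$) is smooth there, neither of which holds along $D$. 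The repair is the genericity statement the paper itself records in the proof of \propref{PROP600}: $\LL\cap\varphi(E_k)\cap D=\emptyset$ for every horizontal $E_k$, so a component $M$ mapped by $\varphi$ to a point of $D$ meets no horizontal exceptional divisor at all, and \eqref{E372} for such $M$ is the trivial inequality $0\le 1$, while \eqref{E373} for it is already supplied by your global genus computation. With that substitution in place of your last paragraph, the argument is complete.
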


\begin{rem}\label{REM011}
Both \propref{PROP600} and \ref{PROP004} hold if we replace $(\varphi, \LL, \C)$
by $(\widehat{\varphi}, \widehat{\LL}, \widehat{\C})$ (see \eqref{E080}, \eqref{E398} and \eqref{E176}),
where
$\widehat{\LL}$ is a general member of
\begin{equation}\label{E154}
|\widehat{\LL}_{\sigma_0, \sigma_1}| = |\sigma_0 \widehat{L} + \sigma_1 \widehat{R}_1|
\end{equation}
and
$\widehat{\C} = \widehat{\varphi}^* \widehat{\LL}$. Here $\widehat{L}$ is the pullback of $L$
under the map $\widehat{X} \to W$.
Note that $\widehat{\LL}_{\sigma_0, \sigma_1}$ is a very ample Cartier divisor
on $\widehat{X}$ under our assumptions that $m|\sigma_1$ and $\sigma_0 >> \sigma_1 > 0$.
\end{rem}

\section{The Push-forward $\varepsilon_*(\varphi^*\LL)$}\label{SEC003}

\subsection{Characterization of $\varepsilon_* (\C\cap \T)$}\label{SS009}

Now let us consider the push-forward $\varepsilon_* \C$. Obviously,
\begin{equation}\label{E633}
\varepsilon_* \C_0 = \varepsilon_*(\C\cap\CS) + \varepsilon_*(\C\cap\T)
\end{equation}
where every component of $\varepsilon(\C\cap\T)$ is rational by \eqref{E102}. Indeed,
$\C\cap \T$ is a disjoint union of trees of smooth rational curves and each connected component of
$\C\cap \T$ meeting the rest of $\C_0$ at a single point
by \propref{PROP600} and \ref{PROP004}.

It is easy to see that the support $\supp \varepsilon_*(\C\cap\T)$ of
$\varepsilon_*(\C\cap\T)$ is independent of the choices of $\LL$ and $\LL_{\sigma_0,\sigma_1}$:
it is the support of the union of
$\varepsilon_* (\varphi_T^{-1}(p))$ for all components $T\subset\T$ with
$\dim(\varphi(T)) = 1$ and a general point $p\in \varphi(T)$, where $\varphi_T: T\to X$ is
the restriction of $\varphi$ to $T$. 

Also we observe that since $\C$ is base point free and $\varepsilon$ maps
$\CS$ birationally onto $X_0'$, $\varepsilon_* (\C\cap \CS)$ is a linear system
with base locus of dimension $\le 0$,
i.e., consisting of isolated points, as $\LL$ varies in $|\LL_{\sigma_0,\sigma_1}|$.
In other words, $\supp\varepsilon_* (\C\cap \T)$, if nonempty, is the base locus
$\Bs(\varepsilon_*\C_0)$ of $\varepsilon_*\C_0$ in dimension one.
The base locus $\Bs(\varepsilon_*\C)$ is independent of our choice of 
$Y$, the resolution of indeterminacy of the rational map
$\varphi\circ \varepsilon^{-1}: X'\dashrightarrow X$. So
$\supp\varepsilon_*(\C\cap\T)$ is independent of
not only the choices of $\LL$ and $\LL_{\sigma_0,\sigma_1}$ but also the choice of $Y$. Indeed, it is
an invariant associated to the rational map $\varphi\circ \varepsilon^{-1}$.

As mentioned at the very beginning, $\varphi\circ \varepsilon^{-1}: X'\dashrightarrow X$ can
be resolved by resolving the base locus of $\varepsilon_* \C$. So understanding
$\varphi\circ \varepsilon^{-1}$ is more or less equivalent to
understanding $\Bs(\varepsilon_*\C)$. This shows the significance of $\varepsilon_* (\C\cap \T)$.
However, what makes $\varepsilon_* (\C\cap \T)$ really important to us is the following observation.

\begin{defn}\label{DEF000}
For a rational map $g: A\dashrightarrow B$, we call $\Exec(g)\subset A$
the exceptional locus of $g$, which is the union of all curves in the set
\begin{equation}\label{E406}
\begin{split}
\{ C: &\quad C\subset A \text{ a reduced and irreducible curve,}\\
 &\quad g \text{ is regular at the generic point of } C \text{ and } g_* C = 0 \}
\end{split}
\end{equation}
where the push-forward $g_* C$ is the closure of the push-forward $g_*(C\cap U)$ with $U\subset A$
the open set over which $g$ is regular.
\end{defn}

\begin{prop}\label{PROP023}
Let $0\le i\le m$ be an integer such $\varphi_* Q_i \ne 0$. Then
\begin{equation}\label{E059}
\Exec(\varphi\circ \varepsilon_{Q_i}^{-1}) \subset \varepsilon_* (\C\cap \T)
\end{equation}
and
\begin{equation}\label{E192}
\varphi\left(\Exec(\varphi\circ \varepsilon_{Q_i}^{-1})\right) \subset D,
\end{equation}
where $\varepsilon_{Q_i}: Q_i\to P_i$ is the restriction of $\varepsilon$ to $Q_i$.
The same holds if we replace $(\varphi, \C)$ by $({\widehat\varphi}, {\widehat\C})$.
\end{prop}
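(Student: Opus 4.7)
My approach begins with an irreducible curve $C \subset \Exec(\psi_i)$, where $\psi_i := \varphi \circ \varepsilon_{Q_i}^{-1}$, and its strict transform $\wt C := \varepsilon_{Q_i}^{-1}(C) \subset Q_i$. Since $\varepsilon_{Q_i}$ is birational and $C$ is a curve (not an exceptional point), $\wt C$ is itself an irreducible curve with $\varepsilon_{Q_i}(\wt C) = C$; the contraction condition gives $\varphi(\wt C) = p$ a single point. Thus (a) and (b) reduce to showing $p \in D$ and $C \subset \supp \varepsilon_*(\C \cap \T)$.

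For part (a), I would argue by contradiction. Suppose $p \notin D$. Choose a small analytic neighborhood $U \ni p$ in $X$ disjoint from $D$, so that $X$ is smooth on $U$ and $\eta : X' \to X$ restricts to an isomorphism over $U$. Let $V$ be the connected component of $\varphi^{-1}(U)$ meeting a general point of $\wt C$, and let $K \subset V$ be the connected component of $\varphi^{-1}(p)$ containing such a point. Following the pattern of \propref{PROP022}, the simple normal crossing structure of $Y_0$ and Zariski's main theorem force $K$ to be larger than $\wt C$ alone: each additional irreducible component $K^\circ \subset K$ lies in some component of $Y_0$ meeting $\wt C$. Any such component is either another piece of $\CS$ (but $Q_{i\pm 1}$ meets $Q_i$ along $D_{i\pm 1}$, which maps to $D$, forcing $p \in D$ contrary to assumption) or a component $T \subset \T$. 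Then $T \subset V$ is $\varepsilon$-exceptional, since $T \in \T$; however, $V \to \varepsilon(V) \subset X'$ factors through the birational map $f|_V : V \to U$ (a resolution in the smooth locus of $X$, using \lemref{LEM102}) followed by $\eta^{-1}|_U$, which is an isomorphism, so no $\varepsilon$-exceptional divisor can appear inside $V$. This contradiction forces $p \in D$.

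For part (b), with $p \in D$ the same local analysis now succeeds and produces a component $T \subset \T$ contained in a neighborhood of $\wt C$, with $T$ meeting $\wt C$ in a curve (via normal crossings). Since $\varepsilon$ extends $\varepsilon_{Q_i}$ and $T$ is glued to $\wt C$ in $Y_0$, one checks that $\varepsilon(T) = C$, and since $T \subset \T$, we have $\varphi_* T = 0$ and hence $\dim \varphi(T) \le 1$; the existence of the non-contracted fiber curve $\wt C$ along $T \cap Q_i$ upgrades this to $\dim\varphi(T) = 1$, with $\varphi(T)$ a curve in $D$ through $p$. Moreover the two fibrations $\varepsilon|_T : T \to C$ and $\varphi|_T : T \to \varphi(T)$ are distinct (they contract to non-comparable curves). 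For a general $\LL \in |\LL_{\sigma_0,\sigma_1}|$, the intersection $\LL \cap \varphi(T)$ is a nonempty finite set; for each $r$ in this set, the fiber $\varphi_T^{-1}(r) \subset T$ is a curve in $\C \cap \T$, and its $\varepsilon$-image is surjective onto $\varepsilon(T) = C$ by the distinctness of the two fibrations. Using the characterization of $\supp \varepsilon_*(\C \cap \T)$ from \ssecref{SS009}, this yields $C \subset \supp \varepsilon_*(\C \cap \T)$. The parallel assertion with $(\widehat \varphi, \widehat \C)$ follows from the same argument applied via the commutative diagram \eqref{E176}.

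The main obstacle is the local analysis forcing the existence of the $\T$-component $T$ with $\varepsilon(T) = C$. This requires combining the simple normal crossing structure of $Y_0$, the local model $xy = t^m$ of $X$ along $D$ (\lemref{LEM102}), and a careful tracking of how the connected fiber $K$ intersects $\CS$ and $\T$: the same structural fact underlies both the contradiction in (a) and the construction of $T$ in (b), which is why the two assertions are naturally proved together.
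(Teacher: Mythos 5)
Your reduction is the same as the paper's: for an irreducible $C\subset\Exec(\varphi\circ\varepsilon_{Q_i}^{-1})$ with strict transform $\wt{C}\subset Q_i$ contracted by $\varphi$ to a point $p$, one must show $p\in D$ and $C\subset\supp\varepsilon_*(\C\cap\T)$, and your picture for the second claim (a component $T\subset\T$ with $\varepsilon(T)=C$, $\dim\varphi(T)=1$, whose general $\varphi$-fibres sweep out $C$) is the correct one. But both load-bearing steps are left unproved. In (a), the claim that $K$ must be strictly larger than $\wt{C}$ is never justified: Zariski's main theorem gives connectedness of $K$, not that it contains anything besides $\wt{C}$, and a generically finite morphism of smooth threefolds can a priori contract a single curve to a point (the Stein factorization $V\to V'\to U$ has $V'$ normal but not necessarily $\BQ$-factorial, so purity of the exceptional locus does not apply). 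Worse, the step ruling out $T\subset\T$ conflates $f$ with $\varphi$: $V$ is a connected component of $\varphi^{-1}(U)$, not of $f^{-1}(U)$, the map $\varphi|_V$ has degree $\deg\phi>1$ in general, and there is no reason at all that $V$ cannot contain $\varepsilon$-exceptional divisors. You also never consider the possibility that the divisor responsible for the contraction is a horizontal exceptional divisor of $f$ rather than a component of $Y_0$. The paper sidesteps all of this by working on the surface $Q_i$: since $\varphi_*\wt{C}=0$, $\wt{C}$ lies in the ramification divisor $\omega_{Q_i}-\varphi_{Q_i}^*\omega_{R_j}$, and \eqref{E877} (together with $\omega_{R_j}+D=0$) decomposes this effective divisor as $(\varphi_{Q_i}^*D-D_{i-1}-D_i)+(\omega_{Q_i}+D_{i-1}+D_i)$, where the second summand is effective with $\varepsilon_{Q_i}$-pushforward in the trivial class on $P_i$, hence $\varepsilon_{Q_i}$-exceptional; as $\wt{C}$ is not $\varepsilon_{Q_i}$-exceptional it must lie in $\varphi_{Q_i}^{-1}(D)$.

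For (b), you correctly identify the crux — producing $T$ — but ``the same local analysis now succeeds'' cannot deliver it, since that analysis is the part of (a) that does not work. What has to be shown is that for a general point $p\in C$ the fibre $\varepsilon^{-1}(p)$ contains a curve that $\varphi$ does not contract; once that is known, your fibration argument (finitely many components of $\T$, one of which maps onto $C$ under $\varepsilon$ and onto a curve in $D$ under $\varphi$, with general $\varphi$-fibres dominating $C$) does finish the proof exactly as in \ssecref{SS009}. The paper obtains this input from \lemref{LEM008}: take a general member $B$ of a very ample system on $X'$ through $p$, set $A=\varepsilon^*B$, and look at $\varphi\colon A\to X$ near $q\in\varepsilon^{-1}(p)\cap\wt{C}$; the branch $A\cap Q_i$ maps onto a curve in $R_j$ through $\varphi(q)\in D$ not contained in $D$, and since $\varphi(A)$ is two-dimensional, \lemref{LEM008} forces another component of the local central fibre of $A$ — necessarily inside $\varepsilon^{-1}(p)\cap\T$ — to map onto a curve in $R_{3-j}$. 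Note also that this mechanism produces a $T$ whose $\varphi$-image is genuinely a curve; your assertion that the presence of $\wt{C}$ along $T\cap Q_i$ ``upgrades'' $\dim\varphi(T)$ to $1$ does not follow, since $\wt{C}$ is contracted by $\varphi$ and only shows $\varphi(T)\ni p$. Without \lemref{LEM008} or an equivalent substitute, the argument does not close.
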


\begin{proof}
Let $G\subset P_i$ be a reduced and irreducible curve
in $\Exec(\varphi\circ \varepsilon_{Q_i}^{-1})$
and $\Gamma = \varepsilon_{Q_i, *}^{-1}(G)$ be the proper transform of $G$
under $\varepsilon_{Q_i}$. Then $\varphi_* \Gamma = 0$.
It suffices to show that $G\subset \varepsilon_* (\C\cap \T)$ and $\varphi(\Gamma)\subset D$.

Suppose that $\varphi(Q_i) = R_j$ for some $1\le j\le 2$.
For convenience, we write $Q = Q_i$, $P = P_i$, $R = R_j$, $\varphi_Q = \varphi_{Q_i}$
and $\varepsilon_Q = \varepsilon_{Q_i}$.
Since $\varphi_*\Gamma = 0$, $\Gamma$ is contained in the discriminant locus of
the map $\varphi_Q : Q \to R$.
That is, $\Gamma \subset \omega_{Q} - \varphi_{Q}^* \omega_{R}$.
By \eqref{E877},
\begin{equation}\label{E203}
\begin{split}
& \quad\Gamma \subset \omega_{Q} - \varphi_{Q}^* \omega_{R}\\
&= (\varphi_{Q}^* D - D_{i-1} - D_i) + (\omega_Q + D_{i-1} + D_i).
\end{split}
\end{equation}
Obviously, $\varepsilon_* (\omega_Q + D_{i-1} + D_i) = 0$. Therefore,
$\Gamma\subset\varphi_{Q}^* D - D_{i-1} - D_i$ and hence $\Gamma\subset \varphi_Q^{-1}(D)$.
It follows that $\varphi(\Gamma)\subset D$. That is, $\varphi$ contracts $\Gamma$ to a point on $D$.

It remains to show that $G\subset \varepsilon_* (\C\cap\T)$. Let $p$ be a general point $G$.
It suffices to show that $\varphi_*(\varepsilon^{-1} (p)) \ne 0$.

We fix a sufficiently ample divisor $B$ on $X'$. Let $B\in |B|$ be a general member passing through
$p$. The pullback $A = \varepsilon^* B\subset Y$ is a flat family of curves over $\Delta$ passing
through a point $q\in \varepsilon^{-1}(p)\cap \Gamma$. Let $\Sigma$ be the connected component
of $A\cap \T$ such that $q\in \Sigma$ and $\varepsilon(\Sigma) = p$. Obviously, $\Sigma$ is supported
on $\varepsilon^{-1}(p)$ and $\Sigma$ meets $A\cap \CS$ at the single point $q$. Let us consider
the map $\varphi: A \to X$ locally at $q$. It maps $q$ to
the point $\varphi(q) = \varphi(\Gamma)$ lying on $D$ and
the component $A\cap Q_i$ to an irreducible curve on $R_j$ passing through $\varphi(q)$. Obviously,
$\varphi(A\cap Q_i)\ne D$ for $B$ general. So we may
apply \lemref{LEM008} to conclude that $\varphi(\Sigma)$ contains an irreducible curve lying on
$R_{3-j}$ passing through $\varphi(q)$. It follows that $\varphi_*\Sigma\ne 0$ and 
hence $\varphi_*(\varepsilon^{-1} (p)) \ne 0$. 
\end{proof}

\begin{rem}\label{REM021}
The converse of \eqref{E059}, i.e.,
\begin{equation}\label{E193}
\supp\varepsilon_* (\C\cap \T)\cap P_i \subset 
\Exec(\varphi\circ \varepsilon_{Q_i}^{-1})
\end{equation}
also holds but is considerably harder to prove. We are not going to do it here since we have no use for it.
\end{rem}

\subsection{Basic properties of $\varepsilon_*(\C\cap\T)$}\label{SS010}

We start with a few basic facts about a component of $\C_0$ not contracted by $\varepsilon$.

\begin{prop}\label{PROP006}
Let $\Gamma$ be an irreducible component of $\C_0$ with $\varepsilon_* \Gamma\ne 0$. 
Suppose that $G = \varepsilon(\Gamma)\subset P_i$ for some $0\le i \le m$. Then
\begin{enumerate}
\item $\varepsilon: \Gamma\to X'$ is an immersion at every point $q\in \Gamma$ 
whose image $\varepsilon(q)\in P_{i-1}\cup P_{i+1}$, i.e., it induces an injection
on the tangent spaces $T_{\Gamma, q}\hookrightarrow T_{X',\varepsilon(q)}$;
\item at every point $q\in \Gamma$ whose image $\varepsilon(q)\in P_{i-1} \cup P_{i+1}$,
there is a component $\Gamma'\subset \C_0$ with $q\in \Gamma'$, $\varepsilon_*(\Gamma')\ne 0$ and
$\varepsilon(\Gamma') \subset P_{i-1}\cup P_{i+1}$;
\item $\varepsilon$ maps $\Gamma$ birationally onto its image $G$ if $\Gamma\subset \CS$;
\item $\Gamma = \C\cap Q_i$ if $\varphi_* Q_i \ne 0$ and $\Gamma\subset Q_i$.
\item $G$ is a fiber of $P_i/D$ if $\Gamma\subset\T$ and
$G \subset P_i$ for some $0 < i < m$.
\end{enumerate}
The same holds true if we replace $(\varphi, \C)$ by $({\widehat\varphi}, {\widehat\C})$.
\end{prop}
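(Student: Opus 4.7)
The plan is to handle the five parts in the order (3), (5), (4), (1), (2): the first two are essentially formal consequences of the setup, while the remaining three reduce to local applications of the lemmas in Section~\ref{SEC004}.

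Part (3) is immediate from $\CS=Q_0+\cdots+Q_m$ together with the construction of $\varepsilon$, which maps each $Q_j$ birationally onto $P_j$ (as $Q_j$ is defined to be the proper transform of $P_j$). Any curve $\Gamma\subset Q_j$ with $\varepsilon_*\Gamma\ne 0$ is therefore mapped birationally onto its image. For (5), \propref{PROP600} and \propref{PROP004} force $\C\cap\T$ to be a disjoint union of trees of smooth rational curves, so $\Gamma$ is itself rational. Since for $0<i<m$ the surface $P_i$ is a $\PP^1$-bundle over the elliptic curve $D$, the composition $G\hookrightarrow P_i\to D$ is a rational map from a rational curve to an elliptic curve, hence constant. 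Thus $G$ lies in a fiber of $P_i/D$, and being a curve must equal that fiber.

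For (4), the hypothesis $\varphi_*Q_i\ne 0$ says that $\varphi_{Q_i}:Q_i\to R$ is a dominant generically finite morphism onto one of the smooth irreducible surfaces $R\in\{R_1,R_2\}$. Since $\LL$ is a general member of a very ample linear system on $X$, the restriction $\LL|_R$ is a general member of a very ample linear system on $R$, and $\C\cap Q_i=\varphi_{Q_i}^*(\LL|_R)$. By Bertini's irreducibility theorem in characteristic zero, applied to the morphism $\varphi_{Q_i}$ whose image $R$ has dimension two, the pullback $\varphi_{Q_i}^{-1}(\LL|_R)$ is irreducible; base point freeness of $\varphi^*\LL$ on the smooth $Y$ together with the smoothness of $Q_i$ makes the intersection reduced. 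Hence $\C\cap Q_i$ is an irreducible reduced curve, and since $\Gamma\subset\C\cap Q_i$ is an irreducible component, $\Gamma=\C\cap Q_i$.

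Finally, for (1) and (2), we may assume $G\not\subset D_{i-1}\cup D_i$ (otherwise $G$ lies in an adjacent component, which is the already-considered case). Fix $q\in\Gamma$ with $p=\varepsilon(q)\in D_i$; the case $p\in D_{i-1}$ is symmetric, and the endpoints $i=0,m$ admit only one side. Since $X'$ is smooth with simple normal crossing central fiber, we may choose local analytic coordinates at $p$ in which $P_i=\{x_1=0\}$, $P_{i+1}=\{x_2=0\}$, and $t=x_1x_2$, so that on a neighborhood $U$ of $p$ we have $X'\cong\{x_1x_2=t\}\subset\Delta^{n+1}$. The restriction of $\varepsilon$ to the connected component of $\varepsilon^{-1}(U)$ containing $q$ then satisfies the hypotheses of \lemref{LEM001} and \lemref{LEM005} with $m=1$. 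By \lemref{LEM005}, $\Gamma$ meets $\varepsilon^*P_{i+1}$ transversely at $q$; unwinding, the function $x_2\circ\varepsilon$ restricts to $\Gamma$ with a simple zero at $q$, so $d\varepsilon_q:T_{\Gamma,q}\to T_{X',p}$ is nonzero. Since $\Gamma$ is smooth at $q$ (as a component of a simple normal crossing divisor in the smooth $\C$), $d\varepsilon_q$ is then injective, proving (1). For (2), \lemref{LEM001}(4) produces a second component $\Gamma'\subset\C_0$ through $q$ with $\varepsilon(\Gamma')\subset P_{i+1}$, and \lemref{LEM001}(2) gives $\varepsilon_*\Gamma'\ne 0$. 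The argument applies verbatim with $(\varphi,\C)$ replaced by $(\widehat\varphi,\widehat\C)$, via the parallel factorization in \eqref{E176}.

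The main technical hurdle is (4): Bertini's irreducibility theorem is applied on $R$ rather than globally on $X$, which requires $\LL|_R$ to trace out a very ample linear system on $R$ as $\LL$ varies; this is ensured by the very ampleness of $\LL_{\sigma_0,\sigma_1}$ on $X$. The local analyses in (1) and (2) are then routine unpackings of \lemref{LEM001} and \lemref{LEM005}.
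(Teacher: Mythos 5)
Your proof is correct and follows essentially the same route as the paper: parts (1)--(2) reduce to Lemma \ref{LEM001} in the local model $x_1x_2=t$ at the double curves $D_{i-1},D_i$ of the smooth $X'$, part (3) uses the birationality of $\varepsilon|_{Q_i}:Q_i\to P_i$, part (4) is Bertini irreducibility for the big, base point free system $\C\cap Q_i$ (your formulation via the generically finite $\varphi_{Q_i}$ and the very ample restriction $\LL|_{R}$ is the same argument), and part (5) uses rationality of the components of $\C\cap\T$. The only cosmetic remark is that the case $G\subset D_{i-1}\cup D_i$ you set aside in (1)--(2) is not "already considered" but is simply impossible by Lemma \ref{LEM001}(1), so nothing is lost.
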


\begin{proof}
By \lemref{LEM001}, 
$\varepsilon^* P_{i-1}$ and $\varepsilon^* P_{i+1}$ meet $\Gamma$ transversely and (1) follows;
(2) also follows directly from \lemref{LEM001}.

Since $\C\cap Q_i$ is base point free, $\varepsilon$ maps $\Gamma\subset \C\cap Q_i$ birationally onto its image.

If $\varphi_* Q_i \ne 0$, $\C$ is big and base point free on $Q_i$. Therefore, $\C\cap Q_i$ is irreducible and
$\Gamma = \C\cap Q_i$. 

The last statement follows directly from the fact that $\Gamma$ is rational if $\Gamma\subset\C\cap \T$.
\end{proof}

A connected component $M$ of $\C\cap \T$ will meet $\C\cap \CS$ at a single point.
This fact leads to the following.

\begin{prop}\label{PROP007}
Let $M$ be a connected component of $\C\cap \T$. We write
\begin{equation}\label{E626}
M = M_0 + M_1 + ... + M_m
\end{equation}
where $\varepsilon(M_i)\subset P_i$ for $0\le i \le m$. Then
for each $0\le i < m$, either
\begin{equation}\label{E376}
\varepsilon_* M_i \cdot P_{i+1} = \varepsilon_* M_{i+1} \cdot P_{i}
\end{equation}
or
\begin{equation}\label{E377}
\varepsilon_* M_i \cdot P_{i+1} = \varepsilon_* M_{i+1} \cdot P_{i} \pm p
\end{equation}
for some $p\in P_i\cap P_{i+1}$ and \eqref{E377} holds for at most one $i$. Or equivalently,
we have either
\begin{equation}\label{E155}
f_* M_0 \cdot D = f_* M_m \cdot D
\end{equation}
or
\begin{equation}\label{E374}
f_* M_0 \cdot D = f_* M_m \cdot D \pm p
\end{equation}
for some $p\in D$, where the intersections are taken on $R_1$ and $R_2$, respectively.
The same holds true if we replace $\C$ by ${\widehat\C}$.
\end{prop}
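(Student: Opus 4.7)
The plan is a node-by-node accounting on the tree $M$, using \lemref{LEM005} together with \propref{PROP006}(2) to pair contributions to $\varepsilon_* M_i\cdot P_{i+1}$ with contributions to $\varepsilon_*M_{i+1}\cdot P_i$, isolating the unique possible asymmetry at the single intersection point $q_0\in M\cap(\mathcal{C}\cap\mathcal{S})$ provided by \propref{PROP004}.

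For each $i$, the projection formula gives $\varepsilon_*M_i\cdot P_{i+1} = M_i\cdot\varepsilon^*P_{i+1}$, a $0$-cycle on $Y$ supported at points $q\in M_i$ with $\varepsilon(q)\in D_i = P_i\cap P_{i+1}$. Any component $\Gamma\subset M_i$ satisfies $\varepsilon(\Gamma)\not\subset D_i$: for interior $i$ by \propref{PROP006}(5), and for $i=0,m$ because $\Gamma\cong\PP^1$ while $D$ is elliptic. Hence \lemref{LEM005} yields that $\Gamma$ meets $\varepsilon^*P_{i+1}$ transversely at each such $q$, contributing $1$ locally. By \propref{PROP006}(2), each such $q$ lies on a second component $\Gamma'\subset\mathcal{C}_0$ with $\varepsilon(\Gamma')\subset P_{i+1}$ (the alternative $P_{i-1}$ is excluded by $P_{i-1}\cap P_{i+1} = \emptyset$). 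If $\Gamma'\subset\mathcal{T}$, connectedness of $M$ forces $\Gamma'\subset M_{i+1}$, so $q$ contributes $1$ symmetrically to $\varepsilon_*M_{i+1}\cdot P_i$ at the same image point in $D_i$. The only other possibility is $\Gamma'\subset Q_{i+1}\subset\mathcal{S}$, which by $M\cdot\mathcal{S} = 1$ forces $q=q_0$; the mirror scenario in which $q_0\in M_{i+1}\cap Q_i$ is handled analogously. Consequently \eqref{E376} holds for every $i$ except possibly the unique one with $\varepsilon(q_0)\in D_i$, where exactly one side picks up the extra term $p = \varepsilon(q_0)$ so that \eqref{E377} holds; if instead $\varepsilon(q_0)$ lies in the interior of some $P_j$, then \eqref{E376} holds for all $i$. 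Pairwise disjointness of the sections $D_i$ guarantees that at most one index can witness the asymmetry.

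To descend to \eqref{E155}/\eqref{E374} on $D\subset X$, I push forward by $\eta: X'\to X$. The identifications $\eta|_{P_0}: P_0\xrightarrow{\sim}R_1$ and $\eta|_{P_m}: P_m\xrightarrow{\sim}R_2$ send $D_0, D_{m-1}$ to $D$, giving $f_*M_0\cdot D = \eta_*(\varepsilon_*M_0\cdot P_1)$ and $f_*M_m\cdot D = \eta_*(\varepsilon_*M_m\cdot P_{m-1})$ as $0$-cycles on $D$. For each interior $0 < i < m$, \propref{PROP006}(5) tells us that every component of $\varepsilon(M_i)$ is a fiber of $P_i/D$, and such a fiber meets the two sections $D_{i-1}$ and $D_i$ in a pair of points collapsed by $\eta$ to the same point of $D$; hence $\eta_*(\varepsilon_*M_i\cdot P_{i-1}) = \eta_*(\varepsilon_*M_i\cdot P_{i+1})$ on $D$. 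Chaining these vertical identities with the horizontal ones obtained in the previous step yields \eqref{E155}, with the single possible asymmetry at $q_0$ translating to \eqref{E374} with $p = \eta(\varepsilon(q_0))\in D$. The main subtlety is verifying the hypothesis $\varepsilon(\Gamma)\not\subset D_i$ needed to apply \lemref{LEM005} — without which the bookkeeping of local multiplicities breaks down — and ensuring that the $\pm$ sign is correctly assigned depending on whether the $\mathcal{S}$-branch at $q_0$ lies in $Q_i$ or $Q_{i+1}$.
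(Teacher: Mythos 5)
Your proof is correct and follows essentially the same route as the paper's: a local pairing/transversality analysis at the double curves $D_i$ (the paper invokes \lemref{LEM001} directly where you route through \lemref{LEM005} and \propref{PROP006}), with the single possible defect pinned down by $M\cdot\CS=1$ from \propref{PROP004}, and the descent to \eqref{E155}/\eqref{E374} via the fact that $\varepsilon_*M_i$ is supported on fibers of $P_i/D$ for $0<i<m$. The paper's two-sentence proof is a compressed version of exactly this bookkeeping.
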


\begin{proof}
If the intersection multiplicities $(\varepsilon_* M_i\cdot P_{i+1})_p$ and
$(\varepsilon_* M_{i+1}\cdot P_i)_p$ do not agree
at some point $p\in P_i\cap P_{i+1}$, say
\begin{equation}\label{E375}
\alpha = (\varepsilon_* M_i \cdot P_{i+1})_p - (\varepsilon_* M_{i+1} \cdot P_i)_p \ne 0,
\end{equation}
then by \lemref{LEM001},
$M$ will meet the rest of $\C_0$ at $|\alpha|$ distinct points $q_i$ with $\varepsilon(q_i) = p$ for
$1\le i\le |\alpha|$. Therefore, we must have either \eqref{E376} or \eqref{E377} and \eqref{E377} cannot hold for more than one $i$.

Since $\varepsilon_* M_i$ are supported on the fibers of $P_i/D$ for $0<i<m$, we see that
\eqref{E155} or \eqref{E374} follows.
\end{proof}

Next, we have the following key fact.

\begin{prop}\label{PROP025}
Every point $p\in f_*(\C\cap\T)\cap D$ lies in the image of 
\begin{equation}\label{E161}
\rho: \Pic(R_1) \oplus \Pic(R_2) \to \Pic(D)
\end{equation}
in $\Pic(D)$, where
$\rho$ is the map given in \eqref{E628}.
The same holds true if we replace $\C$ by ${\widehat\C}$.
\end{prop}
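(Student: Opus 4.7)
The plan is to realize the given point $p\in f_*(\C\cap\T)\cap D$ as the ``extra point'' $\pm q$ appearing in the divisor identity of \propref{PROP007}, applied to a suitable connected component $M\subset\C\cap\T$. Fix $p$ and choose a connected component $M$ of $\C\cap\T$ whose $f$-image passes through $p$. Decompose $M=M_0+M_1+\cdots+M_m$ according to $\varepsilon(M_i)\subset P_i$. By \propref{PROP006}(5), each irreducible component of $M_i$ for $0<i<m$ is sent by $\varepsilon$ onto a fiber of the ruling $P_i/D$, and such fibers are contracted by $\eta$; hence $f_*M_i=0$ for $0<i<m$, and $f_*M=f_*M_0+f_*M_m$ with $f_*M_0\subset R_1$ and $f_*M_m\subset R_2$.

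Next, \propref{PROP007} yields that the effective divisors $f_*M_0\cdot D$ and $f_*M_m\cdot D$ on $D$ either coincide or differ by $\pm q$ for a single point $q\in D$. Setting $N_1=\CO_{R_1}(f_*M_0)\in\Pic(R_1)$ and $N_2=\CO_{R_2}(f_*M_m)\in\Pic(R_2)$, a direct computation gives
\[
\rho(N_1,N_2)=N_1|_D-N_2|_D=\CO_D\bigl(f_*M_0\cdot D-f_*M_m\cdot D\bigr)\in\{0,\pm\CO_D(q)\}.
\]
So in the nontrivial case one reads off $\CO_D(q)\in\IM(\rho)$ immediately.

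To ensure that the given $p$ is realized as this extra point $q$, I would localize: fix a small analytic neighborhood $U\subset X$ of $p$ with $U\cap D=\{p\}$, let $V\subset f^{-1}(U)$ be the connected component containing a preimage of $p$, and rerun the argument underlying \propref{PROP007} on the local curve $V\cap\C\cap\T$. Because $V\cap D=\{p\}$, every divisor on $V\cap D$ is a multiple of $p$, and the local discrepancy is forced to be $\pm p$. Extending the local divisors to global curves on $R_1$ and $R_2$ via the ampleness of $L_1$ and $L_2$ produces global line bundles $N_1,N_2$ with $\rho(N_1,N_2)=\CO_D(p)$. The main obstacle is precisely this third step: globally $p$ may appear as a common point of $f_*M_0\cdot D$ and $f_*M_m\cdot D$ rather than a distinguishing extra point, so the naive application of \propref{PROP007} fails to isolate $p$; the localization resolves this, but the global extension of local line bundles must invoke the generality condition \eqref{E628} to rule out cancellations in $\Pic(D)$ and must carefully track which local component of $f^{-1}(U)$ actually supplies the relevant piece of $\C\cap\T$.
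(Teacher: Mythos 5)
Your global step is fine as far as it goes: writing $N_1=\CO_{R_1}(f_*M_0)$, $N_2=\CO_{R_2}(f_*M_m)$ and invoking \propref{PROP007} does show that the single ``extra'' point $q$ of \eqref{E374}, when it exists, satisfies $\CO_D(q)=\pm\rho(N_1,N_2)\in\IM(\rho)$. But this only accounts for the net discrepancy between the two divisors $f_*M_0\cdot D$ and $f_*M_m\cdot D$, whereas \propref{PROP025} asserts that \emph{every} point of $f_*(\C\cap\T)\cap D$ lies in $\IM(\rho)$ --- including all the ``common'' points in the equality case \eqref{E155} and the images on $D$ of the attachment points coming from components of $M$ lying over the intermediate surfaces $P_i$ ($0<i<m$). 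Your proposed fix for this, localization at $p$, cannot work: membership of $p$ in $\IM(\rho)$ is a statement about the linear equivalence class of $\CO_D(p)$, i.e.\ that it is the restriction to $D$ of a difference of \emph{global} line bundles on $R_1$ and $R_2$. This is a countability-type condition on $p$ that is invisible in any analytic neighborhood $U$ with $U\cap D=\{p\}$; locally both intersection divisors are multiples of $p$ and their difference is $(a-b)p$, which may well be $0$ and in any case carries no information about the class of $p$ itself. The subsequent step of ``extending the local divisors to global curves via ampleness of $L_i$'' does not repair this: any global completion of a local branch acquires additional intersection with $D$ elsewhere, so the resulting $\rho(N_1,N_2)$ is contaminated by those extra points and does not isolate $\CO_D(p)$. (The appeal to \eqref{E628} to ``rule out cancellations'' is also misplaced; that generality hypothesis is used later, for torsion-freeness of $\IM(\rho)$ in \propref{PROP070}, not here.)

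The paper's proof supplies exactly the missing mechanism: an induction over the tree structure of the connected component $M$ of $\C\cap\T$. Using \propref{PROP004} ($M$ meets the rest of $\C_0$ at a single point $q$) and \lemref{LEM001}, one prunes valence-one components $\Gamma$ of the tree one at a time. For a leaf $\Gamma$ with $\varepsilon(\Gamma)\subset P_0$ or $P_m$, the entire divisor $\Gamma\cdot\varepsilon^*(P_{i-1}+P_{i+1})$ is concentrated at the one attachment point $q'$ plus previously marked points, and its class on $D$ is the restriction of $\CO(\varepsilon_*\Gamma)$ from $R_1$ or $R_2$, hence lies in $\IM(\rho)$; subtracting the inductively known marked points isolates $\CO_D(f(q'))\in\IM(\rho)$. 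For a leaf over an intermediate $P_i$ the image is a fiber, so all its attachment points map to the same point of $D$ and membership propagates trivially. This is how each point of $f_*(M)\cap D$ is handled \emph{individually}, rather than only their signed sum. Your argument as it stands proves a strictly weaker statement, and the strategy you outline for upgrading it does not close the gap.
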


\begin{proof}
It is enough to show that $p\in \IM(\rho)$ for every point $p\in f_*(M)\cap D$ and every connected component
$M$ of $\C\cap \T$.

Note that $M$ is a tree of smooth rational curves. We will prove it inductively by constructing
a sequence of trees of smooth rational curves with marked points.

Let $q = M\cap \CS$. We start with $M_0 = M$. Let $\Gamma$ be a component of $M_0$ such that
$\Gamma$ has valence $1$ in $M_0$ and $q\not\in \Gamma$. Suppose that $G = \varepsilon(\Gamma)\subset P_i$.

If $G\cap (P_{i-1}\cup P_{i+1}) = \emptyset$, we simply remove $\Gamma$ from $M_0$ and
let $M_1 = M_0 - \Gamma$.
Otherwise, since $\Gamma$ has valence $1$ in $M_0$,
$\Gamma$ meets $\varepsilon^*(P_{i-1}+P_{i+1})$ transversely at exactly one point $q'$,
where $q' = \Gamma\cap \Gamma'$ with $\Gamma'$ a component of $M$.

It is clear that $f(q') = f_* \Gamma$ in $\Pic(D)$ and hence $f(q') \in \IM(\rho)$. Now we remove $\Gamma$
from $M_0$ and let $M_1 = M_0 - \Gamma$ with one marked point $q'$ on $\Gamma'$.

We continue this process to get a sequence $M_0, M_1, M_2,..., M_n$ of trees of smooth rational curves
with marked points. For each $M_a$ and a component $\Gamma\subset M_a$ with 
$\varepsilon(\Gamma)\subset P_i$ for some $i$, we have
\begin{equation}\label{E469}
\Gamma\cdot \varepsilon^*(P_{i-1} + P_{i+1}) = \sum q_j + \sum r_k + q
\end{equation}
if $q\in \Gamma$ and $\varepsilon(q)\in P_{i-1}\cup P_{i+1}$ and
\begin{equation}\label{E470}
\Gamma\cdot \varepsilon^*(P_{i-1} + P_{i+1}) = \sum q_j + \sum r_k
\end{equation}
otherwise, where $\{q_j\}$ are all the marked points of $M_a$ lying on $\Gamma$ and $\{r_k\}$ are
the intersections between $\Gamma$ and $M_a - \Gamma$ satisfying $\varepsilon(r_k)\in P_{i-1}\cup P_{i+1}$.

Inductively, we have $f(q_j)\in \IM(\rho)$ for all marked points $q_j\in M_a$. 
We construct $M_{a+1}$ by removing a component $\Gamma$ of $M_a$ which has valence $1$ and does not contain
$q$. Suppose that $q' = \Gamma\cap \Gamma'$ for a component $\Gamma'$ of $M_a$. Then by \eqref{E470},
we have $f(q')\in \IM(\rho)$ if $\varepsilon(q')\in P_{i-1}\cup P_{i+1}$, where we assume that
$\varepsilon(\Gamma)\subset P_i$.

If $\varepsilon(q')\in P_{i-1}\cup P_{i+1}$, we let $M_{a+1} = M_a - \Gamma$ with one extra marked point $q'$;
otherwise, we simply let $M_{a+1} = M_a - \Gamma$.

Continue this process and we will eventually arrive at $M_n$, which consists of a single
component $\Gamma$ passing
through $q$. By \eqref{E470}, $f(q)\in \IM(\rho)$ if $\varepsilon(q)\in P_{i-1}\cup P_{i+1}$
with $\varepsilon(\Gamma)\subset P_i$.

Clearly, a point $p\in f_*(M)\cap D$ is either $f(q)$ or $f(q')$ for a marked point $q'$ on some
$M_a$. Hence $p\in \IM(\rho)$ for all $p\in f_*(M)\cap D$.
\end{proof}

\subsection{The case $\alpha_0 = 1$}

Now we are ready to handle the case $\alpha_0 = 1$. Namely, we will prove

\begin{prop}\label{PROP010}
If $\alpha_0 = 1$, then $\deg\phi = 1$.
\end{prop}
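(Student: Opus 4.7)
The plan is to apply Corollary \ref{COR006} to reduce the hypothesis $\alpha_0 = 1$ to a statement about the restriction $\phi|_D$, then to exploit $\End(D) = \BZ$ together with the polarization $L$ to force $\deg\phi = 1$.

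Under $\alpha_0 = 1$, Corollary \ref{COR006} gives $\varphi_* Q_i = 0$ for $0 < i < m$, $\alpha_0 = \beta_{m-1} = 1$, and $\deg\phi_{R_1} = \deg\phi_{R_2} = \deg\phi =: d$. After relabelling I may assume $\varphi(Q_0) = R_1$ and $\varphi(Q_m) = R_2$; the isomorphism $\varepsilon\colon D_0 \xrightarrow{\sim} D$ together with \eqref{E908} (using $\alpha_0 = 1$, $\beta_{-1} = 0$) identifies $\varphi_{D_0}$ with a self-morphism $\psi := \phi|_D\colon D \to D$ of degree $d$. Since $D$ is very general and $\End(D) = \BZ$, every such morphism factors as $[n] \circ \tau$ for a nonzero integer $n$ and a translation $\tau$, so $d = n^2$. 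It remains to force $n = \pm 1$.

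For this I would work on $R_1$. By \eqref{E605} write $\phi^* L = lL + cR_1$ in $\Pic_\BQ(X)$; the identity $(\phi^*L)^2 \cdot X_t = 2d(g-1)$ gives $l^2 = d$, so take $l = n$. The local equation \eqref{E902} yields $\Div(y) = mR_2$ and hence $R_1|_{R_1} = -D/m$ as a $\BQ$-divisor on $R_1$, so
\begin{equation*}
\phi^*L\bigg|_{R_1} = nL_1 - \frac{c}{m}D \quad \text{in } \Pic_\BQ(R_1),
\end{equation*}
where $L_1 = L|_{R_1}$. Identifying this expression with $\phi_{R_1}^* L_1$ and applying the projection formula to $\phi_{R_1}\colon R_1 \dashrightarrow R_1$ (using $(\phi_{R_1})_* D = \deg(\psi)\cdot D = n^2 D$) gives $(\phi_{R_1}^* L_1) \cdot D = n^2(g+1)$ and $(\phi_{R_1}^* L_1)^2 = n^2(g-1)$. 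Matching these against the explicit formula with the known intersection numbers $L_1^2 = g-1$, $L_1 \cdot D = g+1$, and $D^2 = -\lambda/2$ on the blowup $R_1 \to S_1$, reduces the problem to the two algebraic relations
\begin{equation*}
\frac{c}{m} = \frac{2n(n-1)(g+1)}{\lambda}, \qquad \frac{c}{m}\left(4n(g+1) + \frac{c}{m}\lambda\right) = 0.
\end{equation*}
The second equation forces either $c = 0$ (whence $n = 1$ from the first) or $c/m = -4n(g+1)/\lambda$ (whence $-4n = 2n(n-1)$, giving $n = -1$); in either case $\deg\phi = n^2 = 1$.

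The main subtlety is the identification $R_1|_{R_1} = -D/m$ as a $\BQ$-divisor, which rests on the $\BQ$-Cartier structure of $R_1$ dictated by \eqref{E902} and the relation $R_1 + R_2 \sim 0$ coming from $\Div(t) = R_1 + R_2$. Once this factor of $1/m$ is correctly in place, the remainder of the argument is essentially linear algebra in $\Pic_\BQ(R_1)$, and the rigidity of $\End(D) = \BZ$ does the rest.
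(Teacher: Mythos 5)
Your reduction to $\deg\varphi_{D_0}=1$ via Corollary \ref{COR006} and \eqref{E908}, and the use of $\End(D)=\BZ$ to write $\deg\delta_0=n^2$, are fine and consistent with the paper. The gap is in the intersection-theoretic step that is supposed to force $n=\pm 1$. You treat $\phi_{R_1}$ as if it were a base-point-free morphism: the identities $(\phi_{R_1}^*L_1)\cdot D=n^2(g+1)$ and $(\phi_{R_1}^*L_1)^2=n^2(g-1)$ are in general only inequalities. Resolving $\phi_{R_1}$ by $R_1\xleftarrow{f_{Q_0}}Q_0\xrightarrow{\varphi_{Q_0}}R_1$, one has $f_{Q_0}^*D=D_0+F$ with $F$ effective and $f_{Q_0}$-exceptional, so $(f_{Q_0})_*\varphi_{Q_0}^*L_1\cdot D=n^2(g+1)+L_1\cdot(\varphi_{Q_0})_*F$, and the correction term vanishes only if no exceptional curve of $f_{Q_0}$ over a point of $D$ survives under $\varphi_{Q_0}$; likewise $((f_{Q_0})_*\varphi_{Q_0}^*L_1)^2\ge n^2(g-1)$ with equality only in the absence of base points. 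Worse, the class you compute on the left, $(f_*\varphi^*L)\big|_{R_1}=lL_1-\tfrac{c}{m}D$, is the restriction of the flat limit of $\phi_t^*L_t$, which differs from $(f_{Q_0})_*\varphi_{Q_0}^*L_1$ by the components of $f_*(\varphi^*\LL\cap\T)$ landing in $R_1$ and by multiples of $D$. These discrepancies are exactly the objects (the curve $\C\cap\T$, the loci $\Exec$) that Sections \ref{SEC004}--\ref{SEC003} of the paper are devoted to controlling; without that control your two ``algebraic relations'' acquire correction terms of undetermined size and the dichotomy $c=0$ or $c/m=-4n(g+1)/\lambda$ collapses. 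A secondary issue: ``after relabelling I may assume $\varphi(Q_0)=R_1$'' is not harmless, since $R_1$ (a blowup of $S_1$ at $\lambda$ points, with $D^2=-\lambda/2$) and $R_2\cong S_2$ (with $D^2=+\lambda/2$) are not interchangeable; if $\phi$ swaps them you must pass to $\phi^2$ and re-verify the hypothesis.

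The paper's actual proof is quite different and avoids intersection numbers entirely. Proposition \ref{PROP070} shows unconditionally that $\deg\delta_i=1$: the preimage $\delta_i^{-1}(\Lambda)$ of the $\lambda$ singular points is forced into $\supp(f_*(\C\cap\T)+f_*(\widehat\C\cap\T))$ (via Propositions \ref{PROP062} and \ref{PROP023} and the auxiliary model $\widehat X$), hence into the image of $\rho$ by Proposition \ref{PROP025}; since $\IM(\rho)$ is torsion-free and $\End(D)=\BZ$ forces any two points of a fiber of $\delta_i$ to differ by torsion, $\deg\delta_i>1$ is impossible. Then $\deg\varphi_{Q_0}=\alpha_0\deg\varphi_{D_0}=1$ by \eqref{E908} and Corollary \ref{COR003} gives $\deg\phi=1$. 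If you want to salvage your approach, you would need to first establish the base-point and contracted-curve control that the paper's machinery provides, at which point the paper's shorter torsion argument is already available.
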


It suffices to prove $\deg \varphi_{Q_0} = \deg\varphi_{D_0} = 1$ by \eqref{E908}
and \coref{COR003}.

\begin{prop}\label{PROP062}
Let $Q_i\subset\CS$ be a component of $Y_0$ with $\varphi_* Q_i\ne 0$ and let
$p$ be a point on $P_i\cap (P_{i-1}\cup P_{i+1})$ satisfying
\begin{equation}\label{E090}
\dim {\varphi} (\varepsilon_{Q_i}^{-1}(p)) > 0
\end{equation}
where $\varepsilon_{Q_i}: Q_i\to P_i$ is the restriction of $\varepsilon$ to $Q_i$.
Then $p\in \varepsilon_*(\C\cap\T)$. More precisely,
\begin{equation}\label{E194}
p = \varepsilon(\Gamma_1\cap \Gamma_2)
\end{equation}
for two components $\Gamma_j$ of $\C_0$ satisfying $\varepsilon_*\Gamma_j\ne 0$,
$\Gamma_1\subset Q_i$ and $\Gamma_2\subset \T$ and hence
\begin{equation}\label{E196}
p\in \varepsilon(\C\cap \T\cap Q_i).
\end{equation}
The same holds true if we replace $(\varphi, \C)$ by $({\widehat\varphi}, {\widehat\C})$.
\end{prop}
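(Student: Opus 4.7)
My plan is to construct the two components $\Gamma_1$ and $\Gamma_2$ explicitly, following a strategy analogous to the proof of \propref{PROP023}. First I pick an irreducible curve $\Gamma^*\subset\varepsilon_{Q_i}^{-1}(p)$ with $\varphi(\Gamma^*)$ a curve, and I check that $\varphi(\Gamma^*)\not\subset D$. Indeed, if it were, then by \propref{PROP022} we would have $\Gamma^*\subset D_{i-1}\cup D_i\cup\Gamma_{\mathrm{contr}}$ with $\varphi_*\Gamma_{\mathrm{contr}}=0$; but $D_{i-1}$ and $D_i$ are sections of $Q_i/D$ whose $\varepsilon$-images are the full curves $P_i\cap P_{i\mp 1}$ rather than the single point $p$, so $\Gamma^*$ would necessarily lie in $\Gamma_{\mathrm{contr}}$, contradicting that $\varphi(\Gamma^*)$ is a curve.

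I then choose a general $\LL\in|\LL_{\sigma_0,\sigma_1}|$ through a general point $r\in\varphi(\Gamma^*)\setminus D$, and I pick a point $q\in\Gamma^*$ with $\varphi(q)=r$, which lies on $\C=\varphi^*\LL$. For $\LL$ sufficiently general, $q$ is a smooth point of $\C$, $Y_0$ is simple normal crossing at $q$, and $q$ avoids the sections $D_{i-1}$ and $D_i$. I set $\Gamma_1:=\C\cap Q_i$; by \propref{PROP006}(4), since $\varphi_*Q_i\ne 0$, this is an irreducible curve through $q$ with $\varepsilon_*\Gamma_1\ne 0$, giving one of the desired components.

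The main step is constructing $\Gamma_2$. Near $f(q)=\eta(p)\in D$, the threefold $X$ has the local model $xy=t^m$. I apply \lemref{LEM005} to $f\colon \C\to X$: the transversality conclusion, combined with the local geometry at the singularity, shows that at $q$ there is a second component $\Gamma_2\subset\C_0$ meeting $\Gamma_1$ transversely and mapping to the opposite side of the local model (i.e., if $\varphi(Q_i)=R_j$, then $f(\Gamma_2)\subset R_{3-j}$) with $f_*\Gamma_2\ne 0$. Because $q$ lies only on $Q_i$ among the components of $\CS$ (it avoids $D_{i-1}$ and $D_i$), the component $\Gamma_2$ cannot be of the form $\C\cap Q_k$ for any $k$, so $\Gamma_2$ must be contained in some component $T\subset\T$. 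The condition $f_*\Gamma_2\ne 0$ then yields $\varepsilon_*\Gamma_2\ne 0$, and by construction $\varepsilon(\Gamma_1\cap\Gamma_2)=p$.

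The main obstacle I anticipate is the direct application of \lemref{LEM005}: for $0<i<m$, $\Gamma_1$ maps under $f$ into $D$ rather than into the complement $R_j\setminus D$, so the hypothesis $\varepsilon(\Gamma)\not\subset D$ of \lemref{LEM005} fails for $\Gamma_1$ itself. To produce $\Gamma_2$ I would therefore first analyze the local picture via the factoring of $f$ through $\varepsilon$ and $\eta$, and then use the symmetric setup given by $\widehat f\colon Y\to\widehat X$ and the commutative diagram \eqref{E176} to ensure that the component $\Gamma_2$ produced on the other side is not contracted by $\varepsilon$. The assertion for $(\widehat\varphi,\widehat\C)$ follows from the same argument with the roles of $\varepsilon$ and $\widehat\varepsilon$ interchanged.
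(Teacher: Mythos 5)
Your outline is essentially the paper's own proof: the paper likewise reduces \eqref{E090} to the fact that $\Gamma_1=\C\cap Q_i$ meets $\varepsilon^{-1}(p)$ (phrased there as $p\in \Bs(\varepsilon_*(\C\cap Q_i))$), produces the adjacent component $\Gamma_2$ from the normal--crossing local structure, and excludes $\Gamma_2\subset\CS$ on the grounds that this would force $q\in D_{i-1}\cup D_i$, contradicting base-point-freeness of $\varphi^*\LL$ --- your only real variation is arguing directly with a general $\LL$ rather than by contradiction. The one repair your middle step needs (and which you yourself flag) is that $\Gamma_2$ must be produced by \lemref{LEM001} applied to $\varepsilon\colon \C\to X'$ near $p\in P_i\cap P_{i\pm1}$, which yields $\varepsilon_*\Gamma_2\ne 0$ and $\varepsilon(\Gamma_2)\subset P_{i-1}\cup P_{i+1}$ but \emph{not} $f_*\Gamma_2\ne 0$ (the latter fails whenever $\varepsilon(\Gamma_2)$ is a fiber of some $P_k/D$, $0<k<m$, which is exactly the generic case for $\Gamma_2\subset\T$), so the final deduction should run through $\varepsilon_*$ directly rather than through $f_*$.
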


\begin{proof}
We write $P = P_i$, $Q = Q_i$, $\varphi_Q = \varphi_{Q_i}$ and
$\varepsilon_Q = \varepsilon_{Q_i}$.
The hypothesis \eqref{E090} is equivalent to saying that
the rational map $\varphi\circ\varepsilon_Q^{-1}: P\dashrightarrow X$ is
not regular at $p$.

Let
\begin{equation}\label{E095}
\Bs(\varepsilon_* (\C\cap Q)) = \Bs(\varepsilon_* \varphi_Q^* \LL) 
= \bigcap_{\LL\in|\LL_{\sigma_0,\sigma_1}|} \varepsilon_* (\C\cap Q)
\end{equation}
be the base locus of $\varepsilon_*(\C\cap Q)$ as $\LL$ varies in $|\LL_{\sigma_0,\sigma_1}|$.
Note that
\begin{equation}\label{E629}
\dim \Bs(\varepsilon_* (\C\cap Q)) \le 0.
\end{equation}

It is easy to see that the map $\varphi\circ\varepsilon_Q^{-1}$ is not regular at $p$ if and only
if $p\in \Bs(\varepsilon_* (\C\cap Q))$. Therefore, $p\in \varepsilon_* \C
= \varepsilon_* \varphi^*\LL$ for all
$\LL\in |\LL_{\sigma_0,\sigma_1}|$.

Suppose that \eqref{E194} fails.
Let $\Gamma_1 = \C\cap Q$.
Since $p\in \Bs(\varepsilon_* (\C\cap Q))$, the component
$\Gamma_1$ passes through $p$. Let $q\in \varepsilon^{-1}(p)\cap \Gamma_1$.
\lemref{LEM001} tells us there is a component $\Gamma_2$ of
$\C_0$ such that $q \in \Gamma_1\cap \Gamma_2$, $\varepsilon_*\Gamma_2\ne 0$,
$\varepsilon(\Gamma_2)\subset P_{i-1}\cup P_{i+1}$ and $\varepsilon_*\Gamma_2$ meets
$P_i$ transversely at $p$. Since \eqref{E194} fails, $\Gamma_2\subset\C\cap\CS$ and hence
$\Gamma_2\subset Q_{i-1}\cup Q_{i+1}$. It follows that
$q\in D_{i-1}\cup D_{i}$. Hence $q$ is one of finitely many points on $D_{i-1}$ and $D_i$ that
maps to $p$ via $\varepsilon$.
We conclude that $\C = \varphi^*\LL$ has a base point at $q$. This is impossible since
$\varphi^*\LL$ is base point free as $\LL$ varies in $|\LL_{\sigma_0,\sigma_1}|$.
\end{proof}

\begin{cor}\label{COR012}
Let $Q_i\subset\CS$ be a component of $Y_0$ with $\varphi_* Q_i\ne 0$. The rational
map $\varphi\circ \varepsilon_{Q_i}^{-1}$ is regular and
finite at a point $p$ on $P_i\cap (P_{i-1}\cup P_{i+1})$ if
$p\not\in \varepsilon_*(\C\cap \T)$.
The same holds true if we replace $(\varphi, \C)$ by $({\widehat\varphi}, {\widehat\C})$.
\end{cor}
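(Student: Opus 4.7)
The plan is to derive the corollary by combining the two previous results about the behavior of $\varphi\circ\varepsilon_{Q_i}^{-1}$: \propref{PROP062} handles regularity, and \propref{PROP023} handles finiteness.

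First, for regularity at $p$, I would simply take the contrapositive of \propref{PROP062}. That proposition says that if $\dim\varphi(\varepsilon_{Q_i}^{-1}(p)) > 0$ — which, as noted in its proof, is equivalent to $\varphi\circ\varepsilon_{Q_i}^{-1}$ failing to be regular at $p$ — then necessarily $p\in\varepsilon_*(\C\cap\T)$. Since we are assuming $p\not\in \varepsilon_*(\C\cap\T)$, the rational map must be regular at $p$.

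Next, for finiteness, I would argue by contradiction. Suppose $\varphi\circ\varepsilon_{Q_i}^{-1}$ is regular at $p$ but fails to be finite there. Then some irreducible curve $G\subset P_i$ passing through $p$ is contracted by $\varphi\circ\varepsilon_{Q_i}^{-1}$ to a point, and the map is generically regular along $G$ (since regularity is an open condition and we have just shown it holds at $p$). By \defnref{DEF000}, this means $G\subset \Exec(\varphi\circ\varepsilon_{Q_i}^{-1})$. But \propref{PROP023} tells us that $\Exec(\varphi\circ\varepsilon_{Q_i}^{-1})\subset\varepsilon_*(\C\cap\T)$, so $p\in G\subset\varepsilon_*(\C\cap\T)$, contradicting our hypothesis.

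The proof is genuinely short because the heavy lifting has already been done: \propref{PROP062} gives a curve $\Gamma_1\cap\Gamma_2$ forcing a base point of $\varphi^*\LL$ in the non-regular case, while \propref{PROP023} identifies the exceptional locus via the discriminant analysis from \eqref{E877}. There is no real obstacle here; the only small point to check is that regularity at $p$ together with a contracted curve through $p$ indeed puts that curve into $\Exec$ as defined, which is immediate since a dense open subset of the curve lies in the regular locus.
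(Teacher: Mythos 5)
Your proposal is correct and follows essentially the same route as the paper: regularity at $p$ is the contrapositive of \propref{PROP062}, and finiteness follows because $p\not\in\Exec(\varphi\circ\varepsilon_{Q_i}^{-1})$ by \propref{PROP023}. Your extra remark that a contracted curve through a point of regularity lies in $\Exec$ as defined is the (correct) justification the paper leaves implicit.
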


\begin{proof}
By the above proposition, $\varphi\circ \varepsilon_{Q_i}^{-1}$ is regular at $p$.
It is also finite at $p$ since
$p\not\in \Exec(\varphi\circ \varepsilon_{Q_i}^{-1})$ by \propref{PROP023}.
\end{proof}

\begin{prop}\label{PROP070}
Let
\begin{equation}\label{E091}
\delta_i = \varphi_{D_{i}}\circ f_{D_{i}}^{-1}: D\to D
\end{equation}
with $f_{D_i} : D_i\to D$ the restriction of $f$ to $D_i$ for $0\le i\le m-1$.
Then
\begin{itemize}
\item $\delta_i^{-1}(\Lambda) \subset \supp(f_*(\C\cap\T)
+ f_*({\widehat\C}\cap \T))$ for $0\le i\le m-1$. More precisely, if
$\varphi_* Q_i \ne 0$, then
\begin{equation}\label{E195}
\begin{split}
\varepsilon(\varphi_{D_{i-1}}^{-1}(\Lambda)) \cup 
\varepsilon(\varphi_{D_{i}}^{-1}(\Lambda)) 
&\subset
\Exec(\varphi\circ \varepsilon_{Q_i}^{-1}) \cup 
\Exec({\widehat \varphi}\circ \varepsilon_{Q_i}^{-1})\\
&\cup \varepsilon(\C\cap \T\cap Q_i) \cup
\varepsilon({\widehat\C}\cap \T\cap Q_i).
\end{split}
\end{equation}
\item $\deg \varphi_{D_i} = \deg \delta_i = 1$ for $0\le i \le m-1$.
\end{itemize}
\end{prop}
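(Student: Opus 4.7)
The plan is to split the statement into two pieces. The first bullet (in the sharper form \eqref{E195}) will follow from a flop-symmetry argument using \propref{PROP023}. The second bullet will come from the local normal form appearing in the proof of \propref{PROP026}. A short combination step then extends the first bullet to every $k$.

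For \eqref{E195}, I would fix $Q_i$ with $\varphi_*Q_i\ne 0$ and $q\in D_{i-1}\cup D_i$ with $\varphi(q)=r\in\Lambda$, and set $p=\varepsilon(q)\in P_i$. The key point is that the flop swaps the blown-up side of $\Lambda$ between $X$ and $\widehat X$: since $\widehat R_1=S_1$ while $R_2=S_2$, the commutativity of \eqref{E176} forces exactly one of $\varphi|_{Q_i},\widehat\varphi|_{Q_i}$ to factor through a blowdown of $\Lambda$. Concretely, if $\varphi(Q_i)=R_1$ then $\widehat\varphi|_{Q_i}=(R_1\to S_1)\circ\varphi|_{Q_i}$, and if $\varphi(Q_i)=R_2$ then $\varphi|_{Q_i}=(\widehat R_2\to S_2)\circ\widehat\varphi|_{Q_i}$. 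In either case the fiber of the factored map over $r$ equals the preimage of the full exceptional curve ($I_r$ or $\widehat I_r$), which is a curve $C\subset Q_i$ passing through $q$ and collapsed to $r$. The $\varepsilon_{Q_i}$-image of $C$ is a curve through $p$ contracted by the corresponding rational map, so $p\in\Exec(\widehat\varphi\circ\varepsilon_{Q_i}^{-1})\cup\Exec(\varphi\circ\varepsilon_{Q_i}^{-1})$, placing $p$ in the RHS of \eqref{E195}.

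For the second bullet, I would pick $i\le k<j$ with $\varphi_*Q_i,\varphi_*Q_j\ne 0$ and $\varphi_*Q_\alpha=0$ for $i<\alpha<j$; such indices flank every $k$ because $Q_0$ and $Q_m$ are non-contracted by \eqref{E907}. In the proof of \propref{PROP026} the map $\phi\circ\tau^{-1}$ near a general point of $D_k$ has the form $(x,y,z,t)\mapsto(x^a,y^a,z,t)$ with $a=m/(j-i)$, which restricts to the identity on $D=\{x=y=t=0\}$. Since $f_{D_k}$ is likewise the identity on $D$ in these coordinates, the composition $\delta_k$ agrees with $\mathrm{id}_D$ on a dense open, so $\delta_k=\mathrm{id}_D$ and $\deg\delta_k=\deg\varphi_{D_k}=1$. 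The first bullet in full generality then follows: $\delta_k^{-1}(\Lambda)=\Lambda$ for every $k$, and applying \eqref{E195} with $Q_0$ (non-contracted by \eqref{E907}) together with \propref{PROP023} and pushforward under $\eta$ yields $\Lambda\subset\supp(f_*(\C\cap\T)+f_*(\widehat\C\cap\T))$.

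The main obstacle is the first step, where one must see past the flop to locate the contracted curve: at each $r\in\Lambda$ exactly one of $\varphi,\widehat\varphi$ must fail to be finite on $Q_i$, regardless of which component $R_j$ carries $\varphi(Q_i)$. Once this asymmetry is identified the contracted-fiber witness for \eqref{E195} drops out, and the remaining steps are largely diagrammatic.
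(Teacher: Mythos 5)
Your treatment of \eqref{E195} starts the same way as the paper's: $\widehat\varphi_{Q_i}$ factors through $\varphi_{Q_i}$ followed by the blowdown $R_1\to \widehat R_1\isom S_1$ (or symmetrically with $\varphi$ and $\widehat\varphi$ exchanged when $\varphi(Q_i)=R_2$), so the connected component $C$ of $\varphi_{Q_i}^{-1}(I_r)$ through $q$ is collapsed by the other map. But your next assertion --- that ``the $\varepsilon_{Q_i}$-image of $C$ is a curve through $p$'' --- is unjustified and can fail: $\varepsilon_{Q_i}:Q_i\to P_i$ is a birational morphism and may well contract $C$ to a point. This is exactly why the right-hand side of \eqref{E195} carries the extra terms $\varepsilon(\C\cap\T\cap Q_i)\cup\varepsilon(\widehat\C\cap\T\cap Q_i)$; your argument only ever produces points of the $\Exec$ pieces, which should have been a warning sign. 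The paper splits into the two cases $\varepsilon_*C\ne 0$ (your case) and $\varepsilon_*C=0$, and in the latter it invokes \propref{PROP062} at the point $\varepsilon(C)\in P_i\cap(P_{i-1}\cup P_{i+1})$ to land in $\varepsilon(\C\cap\T\cap Q_i)$. You need that second case.

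The argument you give for $\deg\delta_i=1$ does not work. The normal form $(x,y,z,t)\mapsto(x^a,y^a,z,t)$ in the proof of \propref{PROP026} is valid only in separately chosen local analytic coordinates near a general point of $D_i$ and near its image; it tells you that $\delta_i$ is a local isomorphism (unramified) there, nothing more. An isogeny of an elliptic curve of degree $n^2>1$ is everywhere unramified, so ``$\delta_i$ agrees with $\mathrm{id}_D$ on a dense open'' does not follow and $\delta_i=\mathrm{id}_D$ is not even the statement being proved ($\deg\delta_i=1$ only makes $\delta_i$ an automorphism). Worse, your proof of the degree statement uses neither \eqref{E051} ($\End(D)=\BZ$) nor the torsion-freeness of $\IM(\rho)$ from \eqref{E628}, i.e.\ none of the generality hypotheses on $W$ --- and a proof of this proposition that avoids them cannot be correct, since the theorem fails for special (e.g.\ elliptic or Kummer) $K3$s. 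The paper's actual argument runs in the opposite logical order: first \eqref{E195} gives $\delta_i^{-1}(\Lambda)\subset\supp(f_*(\C\cap\T)+f_*(\widehat\C\cap\T))$, then \propref{PROP025} places $\delta_i^{-1}(\Lambda)$ inside $\IM(\rho)$; if $\deg\delta_i>1$ one gets two distinct points $q_1\ne q_2\in\IM(\rho)$ over some $p\in\Lambda$, whence $q_1-q_2$ is torsion by \eqref{E051}, contradicting the torsion-freeness of $\IM(\rho)$. This global arithmetic step is the heart of the proposition and is missing from your proposal. (The reduction to the case $\varphi_*Q_i\ne0$ is also handled differently: when $\varphi_*Q_i=0$ the paper notes $\delta_{i-1}=\delta_i$ because $Q_i$ is contracted along the fibers of $Q_i/D$, rather than appealing to $\delta_k^{-1}(\Lambda)=\Lambda$, which rests on the flawed identity claim.)
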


\begin{proof}
When $\varphi_* Q_i = 0$, $Q_i$ is contracted by $\varphi$ along the fibers of $Q_i/D$. So
$\delta_{i-1} = \delta_i$. Hence it suffices to prove the proposition for $Q_i$ with $\varphi_* Q_i \ne 0$.

Let $Q = Q_i$.
Suppose that $\varphi(Q) = R_1$. For a point $p\in \Lambda$, let $\Gamma$ be a connected component
of $\varphi_{Q}^{-1}(I_p)$. Obviously,
$(D_{i-1}\cup D_i)\cap \Gamma\ne \emptyset$ and hence
\begin{equation}\label{E619}
\left(\varphi_{D_{i-1}}^{-1}(p)\cup \varphi_{D_i}^{-1}(p)\right)\cap \Gamma \ne 0.
\end{equation}
On the other hand, it is clear that every point in
$\varphi_{D_{i-1}}^{-1}(p)\cup \varphi_{D_i}^{-1}(p)$ lies on one of the connected
components of $\varphi_{Q}^{-1}(I_p)$.

We observe that ${\widehat\varphi}_{Q} : Q\to {\widehat R}_1$ factors through $\varphi_Q: Q\to R_1$
with $R_1\to {\widehat R}_1$ blowing down all $I_p$ for $p\in\Lambda$.
Therefore, ${\widehat\varphi}_*\Gamma = 0$.

Suppose that $\varepsilon_* \Gamma \ne 0$. Then
\begin{equation}\label{E092}
\varepsilon(\Gamma) \subset \Exec({\widehat \varphi}\circ \varepsilon_{Q_i}^{-1}).
\end{equation}

Suppose that $\varepsilon_* \Gamma = 0$. Let $q = \varepsilon(\Gamma)$.
Since $I_p \subset \varphi(\Gamma)$ and
$\Gamma\subset \varepsilon_{Q}^{-1}(q)$, $\varphi_* \varepsilon_{Q}^{-1}(q) \ne 0$. So 
$q\in \varepsilon(\C\cap\T\cap Q_i)$ by \propref{PROP062}.

In conclusion, we have
\begin{equation}\label{E089}
\bigcup_{p\in\Lambda}
\varepsilon(\varphi_{Q}^{-1}(I_p)) \subset \Exec({\widehat \varphi}\circ \varepsilon_{Q_i}^{-1})
\cup \varepsilon(\C\cap\T\cap Q_i)
\end{equation}
when $\varphi(Q) = R_1$.
The same argument shows
\begin{equation}\label{E086}
\bigcup_{p\in\Lambda}
\varepsilon({\widehat\varphi}_{Q}^{-1}({\widehat I}_p))
\subset \Exec(\varphi \circ \varepsilon_{Q_i}^{-1})
\cup \varepsilon({\widehat \C}\cap\T\cap Q_i)
\end{equation}
when $\varphi(Q) = R_2$, where ${\widehat I}_p$ is the exceptional curve of the blowup ${\widehat R}_2\to R_2$
over $p\in\Lambda$. Therefore, \eqref{E195} follows and
\begin{equation}\label{E197}
\delta_i^{-1}(\Lambda) \subset \supp(f_*(\C\cap\T)
+ f_*({\widehat\C}\cap \T))
\end{equation}
by \propref{PROP023}. And by \propref{PROP025}, every point $q\in \delta_i^{-1}(\Lambda)$ lies in the image
$\IM(\rho)$ of the map $\rho$.

If $\deg \varphi_{D_i} = \deg \delta_i > 1$, then $\delta_i^{-1}(p)$ contains at least two distinct points
$q_1\ne q_2\in\IM(\rho)$ for a point $p\in \Lambda$. By \eqref{E051},
$q_1 - q_2$ must be torsion. On the other hand, 
$\IM(\rho)$ is obviously torsion-free (see \eqref{E628}). 
Contradiction and hence $\deg\delta_i = 1$.
\end{proof}

\propref{PROP010} then follows easily. This settles the case $\alpha_0 = 1$.

\subsection{The case $\alpha_0\ne 1$}\label{SS011}

When $\alpha_0\ne 1$, $\varphi_* Q_i \ne 0$ for some $1 \le i \le m-1$ by \coref{COR006}.
Let $G\subset P_i$ be a general fiber of $P_i/D$. We see that the rational map
$\varphi\circ \varepsilon_{Q_i}^{-1}: P_i\dashrightarrow R_j$ is regular along $G$. We make
the key observation
\begin{equation}\label{E062}
G\cap \varepsilon_*(\C\cap\T) = \emptyset \Rightarrow G\cap \Exec(\varphi\circ \varepsilon_{Q_i}^{-1})
=\emptyset.
\end{equation}
As a consequence, we see that $\varphi\circ \varepsilon_{Q_i}^{-1}$ maps $G$ onto a curve $\Gamma$ which
only meets $D$ at two points. This line of argument leads to the following.

\begin{prop}\label{PROP027}
Suppose that $\varphi(Q_i) = R_j$ for some $1\le i\le m-1$ and $1\le j\le 2$. Let
$G\subset P_i$ be a general fiber of $P_i/D$, $\Gamma = \psi_{P_i}(G)$
and ${\widehat \Gamma} = {\widehat\psi}_{P_i}(G)$, where
$\psi = \varphi\circ \varepsilon^{-1}$,
${\widehat\psi} = {\widehat\varphi}\circ \varepsilon^{-1}$ and
$\psi_{P_i} = \varphi\circ \varepsilon_{Q_i}^{-1} : P_i\dashrightarrow R_j$ and
${\widehat \psi}_{P_i} = {\widehat\varphi}\circ \varepsilon_{Q_i}^{-1} : P_i \dashrightarrow {\widehat R}_j$
are the restrictions of $\psi$ and ${\widehat \psi}$ to $P_i$, respectively. Then
\begin{itemize}
\item $\Gamma\subset R_j$ and ${\widehat \Gamma}\subset {\widehat R}_j$
are irreducible and base point free and meet $D$ and ${\widehat D}$ set-theoretically at two points,
respectively.
\item $\Gamma\cdot D = {\widehat \Gamma}\cdot {\widehat D} \ge 2$.
\item $\Gamma\cdot I_p = 0$ on $R_j$ for all $p\in\Lambda$ if $j=1$ and
${\widehat\Gamma}\cdot {\widehat I}_p = 0$ on ${\widehat R}_j$ for all $p\in\Lambda$ if $j=2$.
\item For all curves $A\subset \Exec(\phi_{R_j})$, $A\cdot \Gamma = 0$ on $R_j$, where
$\phi_{R_j}$ is the restriction of $\phi$ to $R_j$.
\item For all curves $A\subset \Exec(\xi^{-1}\circ \phi_{R_j})$, $A\cdot \Gamma = 0$ on $R_j$, where
$\xi$ is the birational map ${\widehat X}\dashrightarrow X$ in \eqref{E398}. 
\end{itemize}
\end{prop}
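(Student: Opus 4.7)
The plan is to analyze a general fiber $G$ of $P_i/D$ and its proper transform $G' \subset Q_i$ under $\varepsilon_{Q_i}$, systematically invoking the key observation \eqref{E062}, \propref{PROP023}, and the identities \eqref{E089} and \eqref{E086} established in the proof of \propref{PROP070}. The family $\{G\}$ is naturally parameterized by $D$, and hence so is $\{\Gamma\}$.

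For bullet~(1), a general $G$ avoids the finite set $\varepsilon_*(\C\cap\T)$, so by \eqref{E062} and \propref{PROP023} it avoids $\Exec(\psi_{P_i})$; thus $\psi_{P_i}|_G$ is finite and $\Gamma = \psi_{P_i}(G)$ is irreducible. Since $\psi_{P_i}$ is dominant, $\{\Gamma\}$ sweeps $R_j$ and is base-point-free. By \propref{PROP022}, $\varphi_{Q_i}^{-1}(D) = D_{i-1}\cup D_i\cup \Gamma^*$ with $\Gamma^*$ contracted by $\varphi$; a generic $G'$ avoids $\Gamma^*$ and meets each of $D_{i-1}, D_i$ transversely at one point. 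Their images under $\psi_{P_i}$ are two generic, hence distinct, points of $D$ (using $\varphi(D_i) = D$ from \propref{PROP000}); so $\Gamma\cap D$ consists of exactly two set-theoretic points, and in particular $\Gamma\cdot D\ge 2$. For bullet~(3), when $j=1$ identity \eqref{E089} yields $\bigcup_{p\in\Lambda}\varepsilon(\varphi_{Q_i}^{-1}(I_p)) \subset \Exec(\widehat\psi_{P_i})\cup \varepsilon(\widehat\C\cap\T\cap Q_i)$, a finite union of curves and points; a general $G$ avoids it, hence $G'$ misses all $\varphi_{Q_i}^{-1}(I_p)$ and $\Gamma\cdot I_p = 0$. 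The case $j=2$ follows analogously from \eqref{E086}. The equality $\Gamma\cdot D = \widehat\Gamma\cdot\widehat D$ in bullet~(2) then follows from the blowdown structure: for $j=1$, $R_1\to \widehat R_1 = S_1$ contracts $I_p$ but (by bullet~(3)) maps $\Gamma$ isomorphically to $\widehat\Gamma$, preserving its intersection with $D\cong \widehat D$; the $j=2$ case is symmetric.

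The hardest part is bullets~(4) and~(5), which assert that $\Gamma$ avoids certain exceptional loci in $R_j$. To show $A\cdot \Gamma = 0$ for $A\subset \Exec(\phi_{R_j})$, the plan is to argue that $\psi_{P_i}^{-1}(A) = \varepsilon_{Q_i}(\varphi_{Q_i}^{-1}(A))$ is ``vertical'', i.e., supported on a finite union of fibers of $P_i/D$, so that a general $G$ misses it. If instead $A\cdot \Gamma > 0$, then every $\Gamma_t$ in the family meets $A$; but $A$ is contracted by $\phi$ to a point $p^*$, and $\Gamma_t$ is the image of a generic $q_t\in D$ through the resolution of $\phi$, so iterating, $\phi^2(q_t)\ni p^*$ for generic $t$, forcing $\phi^{-1}(p^*)$ to contain a horizontal component over $D$. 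This is incompatible with the regularity of $\phi$ at generic points of $R_j$ combined with the chain structure of $X_0'$, yielding the desired contradiction. Bullet~(5) is handled by the parallel argument with $\phi$ replaced by $\xi^{-1}\circ \phi$, using the $\widehat{\cdot}$-versions of the input propositions via diagram \eqref{E176}. The main obstacle is verifying this vertical character of $\psi_{P_i}^{-1}(A)$; the symmetry between $X$ and $\widehat X$ encoded in \eqref{E176} will be essential, since it permits us to treat the exceptional curves on the two sides in parallel.
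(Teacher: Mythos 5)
Your treatment of the first three bullets tracks the paper's own proof: the paper likewise obtains the two-point property from the fact that a general fiber $G$ avoids $\varepsilon_*(\C\cap\T)\cap P_i$ (a finite union of fibers of $P_i/D$) together with \eqref{E062}, \propref{PROP022} and \propref{PROP023}, and it derives $\Gamma\cdot I_p=0$ from the factorization of $\widehat\psi_{P_i}$ through $\psi_{P_i}$; your route through \eqref{E089} and \eqref{E086} is an acceptable variant since those are already established in the proof of \propref{PROP070}, which precedes this proposition. One slip in bullet one: ``since $\psi_{P_i}$ is dominant, $\{\Gamma\}$ sweeps $R_j$ and is base-point-free'' is a non sequitur --- a sweeping family can perfectly well have a base point (a pencil of lines through a point sweeps $\PP^2$). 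The correct argument, which is the paper's, is that a base point of the family $\{\Gamma\}$ would force a curve $\Sigma\subset\Exec(\psi_{P_i})$ with $\Sigma\cdot G\ne 0$, i.e.\ a horizontal curve in $\Exec(\psi_{P_i})$; this is impossible because $\Exec(\psi_{P_i})\subset\varepsilon_*(\C\cap\T)\cap P_i$ is supported on fibers of $P_i/D$.

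The genuine gap is in bullets (4)--(5), which you yourself flag as the hard part. Your first plan --- showing $\psi_{P_i}^{-1}(A)$ is vertical so that a general $G$ misses it --- is circular: verticality of $\psi_{P_i}^{-1}(A)$ is essentially equivalent to the conclusion $A\cdot\Gamma=0$, and you offer no independent way to verify it. Your fallback contains the right germ (iterate $\phi$ and use that $\phi(A)$ is a point of $D$, via \propref{PROP023} applied to $P_0$ or $P_m$), but the contradiction you aim for --- that ``$\phi^{-1}(p^*)$ contains a horizontal component over $D$, incompatible with the regularity of $\phi$ at generic points of $R_j$'' --- neither follows from what precedes it nor would be contradictory if it did: $\phi^{-1}(p^*)$ containing the curve $A$ is exactly what $A$ being exceptional means. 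The paper's contradiction is sharper: run the entire analysis for $\phi^2$ in place of $\phi$, so that the two-point property of the first bullet applies to $\phi(\psi_{P_i}(G))=\phi_{R_j}(\Gamma)$; this curve must meet $D$ set-theoretically only at $\phi_{R_j}(\psi_{P_i}(q_1))$ and $\phi_{R_j}(\psi_{P_i}(q_2))$, which are images of general, moving points of $D$. But if $A\cdot\Gamma>0$ then, $|\Gamma|$ being free, $\Gamma$ meets $A$ at general points of $A$, and $\phi_{R_j}$ sends those to the \emph{fixed} point $\phi(A)\in D$, giving a third intersection with $D$ --- contradiction. Bullet (5) is the identical argument with $\xi^{-1}\circ\phi$ appended as in \eqref{E199}. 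You should rewrite (4)--(5) along these lines.
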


\begin{proof}
We write $P = P_i$, $Q = Q_i$, $\psi_P = \psi_{P_i}$ and ${\widehat\psi}_P = {\widehat \psi}_{P_i}$.

Let $q_1 = G\cap P_{i+1}$ and $q_2 = G\cap P_{i-1}$. 
By \eqref{E062}, $\psi_P(q)\not\in D$ for all $q\in G\backslash\{ q_1, q_2\}$.
Therefore, $\Gamma$
meets $D$ set-theoretically only at $\psi_P(q_1)$ and $\psi_P(q_2)$.

Clearly, $\Gamma$ does not pass through a fixed point as $G$ varies;
otherwise, there is a curve $\Sigma\subset \Exec(\psi_P)$ with $\Sigma\cdot G \ne 0$.
Therefore, $|\Gamma|$ is base point free.
Similarly, ${\widehat \Gamma}$
meets ${\widehat D}$ set-theoretically at ${\widehat\psi}_P(q_1)$ and ${\widehat\psi}_P(q_2)$.

When $j=1$, ${\widehat \psi}_P: P\dashrightarrow {\widehat R}_1$ factors through
$\psi_P: P\dashrightarrow R_1$. 
If $\Gamma\cdot I_p \ne 0$, $p\in {\widehat \Gamma}\cap {\widehat D}$, while we have
proved that ${\widehat \Gamma}$ and ${\widehat D}$ meet only
at ${\widehat\psi}_P(q_1)$ and ${\widehat\psi}_P(q_2)$, which are general points on ${\widehat D}$ for $G$
general.
Therefore, we must have $\Gamma\cdot I_p = 0$ for all $p\in\Lambda$.
Similarly, ${\widehat\Gamma}\cdot {\widehat I}_p = 0$ for all $p\in\Lambda$ when $j=2$.

Suppose that there is a curve $A\subset \Exec(\phi_{R_j})$ such that $A\cdot\Gamma > 0$.
Since $\Gamma$ does not pass through a fixed point as $G$ varies,
$\Gamma$ meets $A$ at general points of $A$. Let us consider the rational map:
\begin{equation}\label{E198}
\xymatrix{ \phi\circ \psi: & X' \ar@{-->}[r]^\psi & X \ar@{-->}[r]^\phi & X\\
& G\ar[r]\ar[u]_\subset & \Gamma\ar[u]_\subset}
\end{equation}
Applying the above argument to $\phi^2$, we see that $\phi(\psi_P(G)) = \phi_{R_j}(\Gamma)$
is an irreducible curve on $R_k$ meeting $D$ only at $2$ points, where we assume that
$\phi(R_j) = R_k$ for some $1\le k\le 2$. Since $\Gamma$ meets $A$ at general points of $A$, $\phi_{R_j}$
sends $\Gamma\cap A$ to some points in $\phi(A)$, which lie on $D$ by
\propref{PROP023}. It follows that $\phi_{R_j}(\Gamma)$ meets $D$ at points other than
$\phi_{R_j}(\psi_P(q_1))$ and $\phi_{R_j}(\psi_P(q_2))$. Contradiction. Note that
$\psi_P(q_1)$ and $\psi_P(q_2)$ are general points on $D$ for $G$ general.
Therefore, $A\cdot \Gamma = 0$ for all $A\subset \Exec(\phi_{R_j})$. We can show the same statement
for $A\subset \Exec(\xi^{-1}\circ \phi_{R_j})$ by considering
\begin{equation}\label{E199}
\xymatrix{\xi^{-1}\circ\phi\circ \psi: & X' \ar@{-->}[r]^\psi & X \ar@{-->}[r]^\phi & X
\ar@{-->}[r]^{\xi^{-1}} & {\widehat X}\\
& G\ar[r]\ar[u]_\subset & \Gamma \ar[u]_\subset}
\end{equation}
\end{proof}

\begin{cor}\label{COR010}
Let $\Gamma_j\subset R_j$ be the union of $\psi_{P_i}(G)$ for all $P_i$ satisfying $\psi(P_i) = R_j$.
\begin{itemize}
\item If $\Gamma_1^2 > 0$, then
\begin{equation}\label{E403}
\Exec(\phi_{R_1})\cup \Exec(\xi^{-1}\circ \phi_{R_1})\subset
\bigcup_{p\in \Lambda} I_p \cup C_1
\end{equation}
where $C_1 = \emptyset$ when $g = 2$ or $g$ is odd and
$C_1\subset R_1$ is the pullback of the $(-1)$ curve on $S_1\isom \BF_1$ when $g\ge 4$ is even.
\item If $\Gamma_2^2 > 0$, then
\begin{equation}\label{E360}
\Exec(\phi_{R_2})\cup \Exec(\xi^{-1}\circ \phi_{R_2})\subset C_2
\end{equation}
where $C_2 = \emptyset$ when $g = 2$ or $g$ is odd and
$C_2\subset R_2$ is the $(-1)$ curve on $R_2\isom \BF_1$ when $g\ge 4$ is even.
\end{itemize}
\end{cor}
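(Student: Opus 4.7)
The plan is to combine the intersection-theoretic constraints from \propref{PROP027} with the Hodge index theorem on $R_1$ and $R_2$. By items (4) and (5) of \propref{PROP027}, every irreducible component $A$ of $\Exec(\phi_{R_j})$ or of $\Exec(\xi^{-1}\circ\phi_{R_j})$ satisfies $A\cdot \psi_{P_i}(G) = 0$ on $R_j$ for every general fiber $G$ of $P_i/D$ with $\psi(P_i) = R_j$, hence $A\cdot \Gamma_j = 0$. Since $\Gamma_j$ is effective and $\Gamma_j^2 > 0$ by hypothesis, Hodge index on the smooth projective surface $R_j$ forces $A^2 < 0$ whenever $A$ is nonzero in $\Pic(R_j)$. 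The corollary then reduces to a classification of irreducible curves of negative self-intersection in each relevant surface.

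For $j=2$, recall $R_2\isom S_2$ is one of $\PP^2$, $\BF_0$, or $\BF_1$. No irreducible curve on $\PP^2$ or on $\BF_0$ has negative self-intersection, while the only such curve on $\BF_1$ is the $(-1)$-section, which is precisely $C_2$ in the statement. This settles the containment $\Exec(\phi_{R_2})\cup\Exec(\xi^{-1}\circ\phi_{R_2})\subset C_2$.

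For $j=1$, the extra input is \propref{PROP027}(3), which gives $\Gamma_1\cdot I_p = 0$ for all $p\in\Lambda$. Because the classes $\{I_p\}_{p\in\Lambda}$ span the orthogonal complement of $f^*\Pic(S_1)$ inside $\Pic(R_1)$, where $f:R_1\to S_1$ is the blowup along $\Lambda$, this forces $\Gamma_1 = f^*M$ for some class $M\in\Pic(S_1)$ with $M^2 = \Gamma_1^2 > 0$. Now for $A$ irreducible with $A\cdot\Gamma_1 = 0$, the projection formula gives $f_*A\cdot M = 0$, so either $f_*A = 0$, in which case $A = I_p$ for some $p\in\Lambda$, or $f_*A$ is an irreducible curve on $S_1$ with $M\cdot f_*A = 0$. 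In the latter case Hodge index on $S_1$ (using $M^2 > 0$) yields $(f_*A)^2 < 0$, which is impossible on $\PP^2$ (so $g=2$) or on $\BF_0$ (so $g$ odd), while on $\BF_1$ the unique possibility is $f_*A = C_1$, the $(-1)$-section. Since the generic choice of $\Lambda\subset D$ is disjoint from $C_1$ (as $C_1\cdot D = 1$ on $\BF_1$ while $\Lambda$ is a $16$-tuple of general points of $D$), the strict transform of $C_1$ coincides with the pullback $f^*C_1$, and $A$ is this curve, the $C_1$ of the statement.

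The only substantive point in this argument is the orthogonal decomposition $\Gamma_1 = f^*M$, which is formal once one is in the Picard lattice of the blowup. Everything else is the standard list of negative irreducible curves on the minimal rational surfaces $\PP^2$, $\BF_0$, $\BF_1$. In this sense the real work was done already in \propref{PROP027}, and the corollary is a direct consequence.
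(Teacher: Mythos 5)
Your argument is correct and is exactly the derivation the paper leaves implicit (its proof of this corollary is the single line ``This follows directly from Proposition \ref{PROP027}''): orthogonality of $\Exec$ to the nef class $\Gamma_j$ with $\Gamma_j^2>0$, Hodge index to force negative self-intersection, the relation $\Gamma_1\cdot I_p=0$ to descend to $\Pic(S_1)$, and the classification of irreducible negative curves on $\PP^2$, $\BF_0$, $\BF_1$. No gaps; the only cosmetic point is your reuse of $f$ for the blowup $R_1\to S_1$, which clashes with the paper's $f\colon Y\to X$.
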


\begin{proof}
This follows directly from \propref{PROP027}.
\end{proof}

If
\begin{equation}\label{E300}
\bigcup_{i=1}^{m-1} \varphi(Q_i) = R_1\cup R_2
\end{equation}
and $\Gamma_j^2 > 0$ for $j=1,2$,
then both \eqref{E403} and \eqref{E360} hold by \coref{COR010}, which is the ideal situation for us.
It is not hard to see that \eqref{E300} is true if and only if one of the following two conditions holds:
\begin{itemize}
\item $\alpha_0 \ne 1$ (or equivalently $\deg\phi_{R_1} < \deg\phi$) and $\phi(R_1) \ne \phi(R_2)$; we can
put this into the form
\begin{equation}\label{E301}
R_1 + R_2\subset \phi_*(R_1) + \phi_*(R_2) \ne (\deg\phi)(R_1 + R_2).
\end{equation}
\item We have
\begin{equation}\label{E450}
\deg \phi_{R_1} + \deg \phi_{R_2} < \deg \phi.
\end{equation}
\end{itemize}

Even if both \eqref{E301} and \eqref{E450} fail, we can still make \eqref{E300} happen by replacing $\phi$
by $\phi^2$.

\begin{prop}\label{PROP011}
One of the following must be true:
\begin{enumerate}
\item $\deg\phi_{R_1} = \deg\phi$.
\item \eqref{E301} holds.
\item \eqref{E450} holds.
\item We have
\begin{equation}\label{E060}
\deg (\phi^2)_{R_1} + \deg (\phi^2)_{R_2} < \deg \phi^2 = (\deg\phi)^2
\end{equation}
where $(\phi^2)_{R_i} = \phi\circ \phi_{R_i}$ are the restrictions of $\phi^2$ to $R_i$ for $i=1,2$.
\end{enumerate}
\end{prop}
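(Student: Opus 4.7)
The plan is to argue by contradiction: assume that (1), (2), and (3) all fail, and derive (4). Failure of (1) reads $\deg\phi_{R_1}<\deg\phi$; by \coref{COR006} this is equivalent to $\alpha_0\ne 1$, and in particular also forces $\deg\phi_{R_2}<\deg\phi$. Failure of \eqref{E301} combined with $\alpha_0\ne 1$ then forces $\phi(R_1)=\phi(R_2)$, so after relabeling if necessary we may assume both equal $R_1$. Failure of (3) gives $\deg\phi_{R_1}+\deg\phi_{R_2}\ge \deg\phi$.

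The first key step is to convert $\deg\phi_{R_1}$ and $\deg\phi_{R_2}$ into degrees of the restrictions of $\varphi$ to the boundary components $Q_0=\wt{R}_1$ and $Q_m=\wt{R}_2$ of $\CS$. Since $\varepsilon$ maps $Q_0$ birationally onto $P_0$ and $\eta$ maps $P_0$ birationally onto $R_1$, writing $\varphi=\phi\circ\eta\circ\varepsilon$ yields $\deg\varphi_{Q_0}=\deg\phi_{R_1}$, and analogously $\deg\varphi_{Q_m}=\deg\phi_{R_2}$. Under the hypothesis $\phi(R_1)=\phi(R_2)=R_1$, both $Q_0$ and $Q_m$ dominate $R_1$ under $\varphi$; hence if we set $I_1=\{i:\varphi(Q_i)=R_1\}$, then $\{0,m\}\subset I_1$. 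The identity \eqref{E807} gives $\sum_{i\in I_1}\deg\varphi_{Q_i}=\deg\phi$, and therefore
\[
\deg\phi_{R_1}+\deg\phi_{R_2}
=\deg\varphi_{Q_0}+\deg\varphi_{Q_m}
\le \deg\phi,
\]
with equality iff $I_1=\{0,m\}$. Combined with the failure of (3), this forces the equality $\deg\phi_{R_1}+\deg\phi_{R_2}=\deg\phi$.

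The final step is to compute $\deg(\phi^2)_{R_i}$. Since $\phi(R_i)=R_1$ for both $i=1,2$, the composition $(\phi^2)_{R_i}=\phi_{R_1}\circ \phi_{R_i}:R_i\dashrightarrow R_1\dashrightarrow R_1$ consists of two dominant rational maps, so degrees multiply:
\begin{align*}
\deg(\phi^2)_{R_1}+\deg(\phi^2)_{R_2}
&=(\deg\phi_{R_1})^2+\deg\phi_{R_1}\cdot\deg\phi_{R_2}\\
&=\deg\phi_{R_1}\bigl(\deg\phi_{R_1}+\deg\phi_{R_2}\bigr)\\
&=\deg\phi_{R_1}\cdot\deg\phi.
\end{align*}
Since $\alpha_0\ne 1$ forces $\deg\phi_{R_1}<\deg\phi$ strictly, the right-hand side is strictly less than $(\deg\phi)^2=\deg\phi^2$, which is exactly (4). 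The argument is essentially a bookkeeping exercise once the reductions are in place; the only delicate point is the identification $\deg\phi_{R_i}=\deg\varphi_{Q_j}$ for $j\in\{0,m\}$ together with the sum rule \eqref{E807}, after which the conclusion is immediate.
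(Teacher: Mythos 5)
Your proof is correct and follows essentially the same route as the paper: reduce to $\phi(R_1)=\phi(R_2)$ with $\deg\phi_{R_1}+\deg\phi_{R_2}=\deg\phi$, then factor $\deg(\phi^2)_{R_1}+\deg(\phi^2)_{R_2}=\deg\phi_{R_j}\cdot\deg\phi<(\deg\phi)^2$. The only difference is that you explicitly justify the inequality $\deg\phi_{R_1}+\deg\phi_{R_2}\le\deg\phi$ via \eqref{E807}, a detail the paper leaves implicit; that is a correct and welcome addition.
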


\begin{proof}
Suppose that all (1)-(3) fail. Then $\phi(R_1) = \phi(R_2) = R_j$ for some $1\le j\le 2$ and
$\deg \phi_{R_1} + \deg \phi_{R_2} = \deg\phi$. Then
\begin{equation}\label{E451}
\begin{split}
\deg (\phi^2)_{R_1} + \deg (\phi^2)_{R_2} &=
\deg (\phi_{R_j} \circ \phi_{R_1}) + \deg (\phi_{R_j} \circ \phi_{R_2})\\
&= (\deg\phi_{R_j}) (\deg \phi_{R_1} + \deg \phi_{R_2})\\
&= (\deg\phi_{R_j})(\deg\phi) < (\deg\phi)^2.
\end{split}
\end{equation}
\end{proof}

So we can always assume \eqref{E300} when $\alpha_0 \ne 1$. The more serious issue is that we might have
$\Gamma_j^2 = 0$ even if $\Gamma_j\ne 0$. This can be worked around with the following trick.

\begin{prop}\label{PROP008}
Assuming \eqref{E300}, if $\phi(R_j) = R_j$, then there is an irreducible curve
$\Gamma\subset R_j$ such that
$\Gamma$ is big and nef on $R_j$, $\Gamma\cdot I_p = 0$ for all $p\in\Lambda$ if $j=1$ and
\begin{equation}\label{E380}
A\cdot \phi^k(\Gamma) = 0
\end{equation}
for all $k\in \BZ_{\ge 0}$ and curves $A\subset \Exec(\phi_{R_j})\cup \Exec(\xi^{-1}\circ \phi_{R_j})$.
Consequently, we have \eqref{E403} if $j = 1$ and \eqref{E360} if $j = 2$.
\end{prop}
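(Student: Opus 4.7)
The plan is to take $\Gamma = \Gamma_0 := \psi_{P_i}(G)$ for some $P_i$ with $\psi(P_i) = R_j$ (which exists by \eqref{E300}) and $G$ a very general fiber of $P_i/D$. By \propref{PROP027} this $\Gamma$ is irreducible, base-point-free, meets $D$ set-theoretically at two points $\psi_{P_i}(q_1),\psi_{P_i}(q_2)$ (which are general points on $D$ as $G$ varies), satisfies $\Gamma\cdot I_p = 0$ for $p\in \Lambda$ when $j=1$, and $A\cdot \Gamma = 0$ for every $A\subset \Exec(\phi_{R_j}) \cup \Exec(\xi^{-1}\circ \phi_{R_j})$. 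In particular $\Gamma$ is nef, and once bigness is in hand the implications \eqref{E403}, \eqref{E360} follow by the Hodge-index argument already used in \coref{COR010}.

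To verify the iterated vanishing $A\cdot \phi^k(\Gamma) = 0$ for all $k\ge 0$, I would run the argument of \propref{PROP027} with $\phi$ replaced by $\phi^{k+1}$. Since $\phi(R_j)=R_j$, every iterate $\phi^{k+1}$ is again a dominant self rational map of $X$ which fits into the same degeneration framework and still satisfies \eqref{E300}. Applying the proof of \propref{PROP027} to $\phi^{k+1}\circ\varepsilon^{-1}:X'\dashrightarrow X$ then shows that $\phi^{k+1}(\Gamma)$ is irreducible and meets $D$ set-theoretically at only two points, namely the images of $\psi_{P_i}(q_1)$ and $\psi_{P_i}(q_2)$ under $\phi^{k+1}$. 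If some $A\subset \Exec(\phi_{R_j})$ had $A\cdot \phi^k(\Gamma) > 0$, then $\phi^{k+1}(\Gamma)$ would acquire a third set-theoretic intersection with $D$ at the point $\phi(A)\in D$, and for very general $G$ this $\phi(A)$ is distinct from the two expected points---a contradiction. The argument for $\Exec(\xi^{-1}\circ \phi_{R_j})$ is parallel, using ${\widehat\phi}$ in place of $\phi$.

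For bigness, if $\Gamma_0^2 > 0$ we are done. Otherwise $\Gamma_0^2 = 0$ and the base-point-free pencil $|\Gamma_0|$ defines a fibration $\pi: R_j\to B$ onto a smooth curve $B$. The plan in this case is to replace $\Gamma_0$ by $\phi_{R_j}(\Gamma_0)$: its intersection properties persist by the previous paragraph, and $\phi_{R_j}(\Gamma_0)^2 > 0$ as soon as $\phi_{R_j}$ does not preserve the pencil. Ruling out a $\phi_{R_j}$-invariant pencil of self-intersection zero is the main obstacle; one uses \coref{COR003}, \coref{COR006}, and the explicit description of $R_j$ as a blowup of $\BF_0$, $\BF_1$, or $\PP^2$ at $\Lambda$, together with the prescribed intersection $\Gamma_0\cdot D \ge 2$, to force any such invariant fibration to either descend to an automorphism of $B\isom \PP^1$ that contradicts $\deg\phi_{R_j} \le \deg\phi$ or to contradict the two-point structure of $\Gamma_0\cap D$.
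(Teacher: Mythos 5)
Your choice of $\Gamma_0 = \psi_{P_i}(G)$, the treatment of the easy case $\Gamma_0^2>0$, and the iteration argument for \eqref{E380} all match the paper, which runs the argument of \propref{PROP027} on the composites $\phi^{k+2}\circ\psi$ and $\xi^{-1}\circ\phi^{k+2}\circ\psi$. The genuine gap is in the case $\Gamma_0^2=0$, which you correctly identify as the main obstacle but do not close. Neither of your two suggested escape routes works as stated: if $\phi_{R_j}$ preserves the zero-square pencil $|H|$, the induced self-map of the base $B\isom\PP^1$ is in general a finite map of degree greater than one, not an automorphism, so no contradiction with $\deg\phi_{R_j}\le\deg\phi$ follows; and ``contradicting the two-point structure of $\Gamma_0\cap D$'' is precisely the statement that needs a mechanism --- a priori $\phi_{R_j,*}(H)=\mu H'$ with $H'$ another ruling member is perfectly consistent with everything established in \propref{PROP027}, since $H'$ also meets $D$ set-theoretically at two points.

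The paper's mechanism is a tangency computation that your sketch is missing. Since a general $H$ avoids $\Exec(\phi_{R_j})\cup\Exec(\xi^{-1}\circ\phi_{R_j})$, the image $\Gamma=\phi_{R_j,*}(H)$ meets $D$ at two points with multiplicity $\mu$ each, where $\mu=\alpha_0$ (resp.\ $\beta_{m-1}$), and $\mu>1$ because \eqref{E300} forces $\alpha_0\ne 1$ via \coref{COR006}. To see that $\Gamma$ is \emph{reduced} --- hence $\Gamma\cdot D=2\mu>2$, hence not a ruling member, hence $\Gamma^2>0$ --- the paper specializes $H$ to the member of the pencil tangent to $D$ at a point $p$ with $H=2p$ in $\Pic(D)$. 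Such a $p$ does not lie in $\IM(\rho)$, so $p\notin f_*(\C\cap\T)$ by \propref{PROP025}, so $\phi_{R_j}$ is regular and finite at $p$ by \coref{COR012} and is totally ramified along $D$ there with index $\mu$; locally it sends the smooth branch $H$, tangent to $D$ to order $2$, to a smooth branch tangent to $D$ to order $2\mu$, which forces the image to be generically reduced. Without this step (or some substitute) the invariant-pencil case remains open, and it is not vacuous: it occurs whenever every $P_i$ dominating $R_j$ sends its fibers into the ruling class.
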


\begin{proof}
If $\Gamma_j^2 > 0$, then we are done. Otherwise, $\Gamma_j^2 = 0$ and hence $\Gamma_j = nH$ for some integer
$n>0$, where $H\cdot D = 2$ and $H$ gives a ruling of $R_j$, i.e.,
$|H|$ is a pencil giving a map $R_j\to\PP^1$.
Let $H\in |H|$ be a general member of the pencil $|H|$. 
Note that $H = \psi_{P_i}(G)$ for some $P_i$ with $\psi(P_i) = R_j$ and a general fiber $G$ of $P_i/D$.

Since $A\cdot \Gamma_j = 0$ for all curves $A\subset \Exec(\phi_{R_j})\cup \Exec(\xi^{-1}\circ \phi_{R_j})$,
\begin{equation}\label{E378}
H\cap \left(\Exec(\phi_{R_j})\cup \Exec(\xi^{-1}\circ \phi_{R_j})\right) = \emptyset.
\end{equation}
Therefore, $\Gamma = \phi_{R_j,*}(H)$ meets $D$ at two points with multiplicities $\mu$ each, where
$\mu = \alpha_0$ if $j=1$ and $\mu = \beta_{m-1}$ if $j=2$.

Note that $\phi_{R_j}\circ \eta_P = \psi_P$, where $P = P_0$ if $j=1$ and
$P = P_m$ if $j=2$. So we may identify $\phi_{R_j}$ and $\psi_P$ via the isomorphism $\eta_P: P\isom R_j$.

The key here is to prove that $\Gamma$ is reduced and hence $\Gamma^2 > 0$ since $\mu > 1$.
Now we take $H$ to be a member of $|H|$ tangent to $D$ at a point $p$, i.e., $H = 2p$ in $\Pic(D)$. It is
not hard to see that $p\not\in \IM(\rho)$. Therefore, $p\not\in f_*(\C\cap\T)$. Hence
$\phi_{R_j}$ is regular and finite locally at $p$ by \coref{COR012}. That is,
$\phi_{R_j}$ is totally ramified along $D$ at $p$ with ramification index $\mu$. It is easy to see that
$\phi_{R_j}$ locally maps $H$ at $p$ to a smooth curve $\Gamma$ tangent to $D$ at $\phi_{R_j}(p)$ with multiplicity
$2\mu$. This implies that $\Gamma$ is reduced for $H$ general and hence $\Gamma^2 > 0$ and $\Gamma$ is
big.

Similarly, ${\widehat \Gamma} = \xi_*^{-1}(\Gamma) = \xi_*^{-1}(\phi_{R_j,*}(H))$ is big and nef and meets
${\widehat D}$ at two points with multiplicities $\mu$ each. So $\Gamma\cdot I_p = 0$ for all $p\in\Lambda$
if $j=1$ by the same argument in the proof of \propref{PROP027}.

Finally, using the same argument in the proof of \propref{PROP027} again, we can conclude \eqref{E380} by
iterating $\phi$ and considering the maps
\begin{equation}\label{E379}
\xymatrix{\phi^{k+2}\circ \psi: & X' \ar@{-->}[r]^\psi & X \ar@{-->}[r]^\phi & X \ar@{-->}[r]^{\phi^k}
& X \ar@{-->}[r]^\phi & X\\
& G \ar[r] \ar[u]_{\subset} & H \ar[r]\ar[u]_{\subset} & \Gamma\ar[u]_{\subset}}
\end{equation}
and
\begin{equation}\label{E382}
\xymatrix{\xi^{-1}\circ \phi^{k+2}\circ \psi: & X' \ar@{-->}[r]^\psi & X \ar@{-->}[r]^\phi
& X \ar@{-->}[r]^{\phi^k} & X \ar@{-->}[r]^{\xi^{-1}\circ\phi} & {\widehat X}\\
& G \ar[r] \ar[u]_{\subset} & H \ar[r]\ar[u]_{\subset} & \Gamma\ar[u]_{\subset}}
\end{equation}
\end{proof}

Note that we can always assume that $\phi(R_j) = R_j$ for some $1\le j\le 2$; otherwise, if
$\phi(R_1) = R_2$ and $\phi(R_2) = R_1$, we simply replace $\phi$ by $\phi^2$.

Now we are ready to prove our main theorem when $\alpha_0 \ne 1$. We always assume \eqref{E300}.

\medskip

{\noindent \underline{Case $\phi(R_2) = R_2$ and $g$ odd or $g = 2$}}.
So we have \eqref{E360} by \propref{PROP008}. Here $R_2\isom \PP^2$ or $\BF_0$ and hence
\begin{equation}\label{E381}
\Exec(\phi_{R_2}) = \emptyset.
\end{equation}
Therefore, for a general member $H$ of an ample linear system $|H|$ on $R_2$,
$\phi_{R_2}(H)$ is a curve meeting
$D$ at $H\cdot D$ points with multiplicity $\beta = \beta_{m-1}$ each.

Suppose that there is a point $q\in R_2$ with
\begin{equation}\label{E173}
\dim (\varphi(f_Q^{-1}(q))) > 0
\end{equation}
where $f_Q: Q\to R_2$ is the restriction of $f$ to $Q = Q_m$. 
That is, $\phi_{R_2}$ is not regular at $q$.
Let $M = f_Q^{-1}(q)$.

We write
\begin{equation}\label{E395}
\varphi_Q^* D = \beta D_{m-1} + A
\end{equation}
where $\varphi_* A = 0$. By \eqref{E381}, we necessarily have $f_* A = 0$. Since
every connected component of $\supp A$ must meet $D_{m-1}$, $f(\supp A) \subset D$ 
and $f(A_1) \ne f(A_2)$ for
any two distinct connected components $A_1$ and $A_2$ of $\supp A$. As a consequence, we see that
$q\in D$ and $\varphi(M)$ meets $D$ at the unique point $\delta(q)$,
where $\delta = \delta_{m-1}: D\to D$ is the map defined in \eqref{E091}.
Hence
\begin{equation}\label{E388}
\delta(b q) = \varphi_* M
\end{equation}
in $\Pic(D)$ for some integer $b > 0$. Here we use $\delta$ for both the map $\delta: D \to D$ and
the push-forward $\delta_*: \Pic(D)\to \Pic(D)$ induced by $\delta$.

In addition, since $q\in D$,
\begin{equation}\label{E383}
q\in f_*(\C\cap \T)\cap D \subset \IM(\rho)
\end{equation}
by \propref{PROP062}.

When $g = 2$, $R_2\isom \PP^2$ and we choose $H$ to be the hyperplane divisor. Then
\begin{equation}\label{E389}
\phi_{R_2,*}(H) = \beta H \Rightarrow \delta(\beta H) = \beta H
\end{equation}
in $\Pic(D)$. Since $\varphi_* M$ and $H$ are linearly dependent in $\Pic(R_2)$, we derive
\begin{equation}\label{E390}
\delta(3 b\beta q - b\beta H) = 0 \Rightarrow b \beta H = 3b \beta q
\end{equation}
by combining \eqref{E388} and \eqref{E389} and making use of \eqref{E051} and
the fact that $\deg\delta = 1$ by \propref{PROP070}.
Since $q\in\IM(\rho)$ and $\IM(\rho)$ is torsion free, $H = 3q$ by \eqref{E390}. It is not hard to see that
such $q$ cannot lie in $\IM(\rho)$. Contradiction. Therefore, $\phi_{R_2}$ is regular and finite everywhere.
So we have
\begin{equation}\label{E391}
\phi_{R_2}^* D = \beta D \Rightarrow \beta = \beta^2 \Rightarrow \beta = 1
\end{equation}
and we are done.

When $g$ is odd, $R_2\isom \BF_0$ and we let $H_1$ and $H_2$ be the two rulings of $\BF_0 = \PP^1\times\PP^1$.
Let
\begin{equation}\label{E461}
\phi_{R_2,*}(H_1) = a_{11} H_1 + a_{12} H_2 \text{ and } \phi_{R_2,*}(H_2) = a_{21} H_1 + a_{22} H_2
\end{equation}
in $\Pic(R_2)$ for some integers $a_{ij}$. Then the previous argument shows that
\begin{equation}\label{E392}
\delta(\beta H_1) = a_{11} H_1 + a_{12} H_2 \text{ and } \delta(\beta H_2) = a_{21} H_1 + a_{22} H_2
\end{equation}
in $\Pic(D)$. It follows that
\begin{equation}\label{E394}
\pm \beta(H_1 - H_2) = \delta(\beta H_1 - \beta H_2) = (a_{11} - a_{21}) H_1 - (a_{22} - a_{12}) H_2
\end{equation}
and hence
\begin{equation}\label{E460}
a_{11} - a_{21} = a_{22} - a_{12} = \pm\beta
\end{equation}
where we make use of \eqref{E051} and the fact that $\deg\delta = 1$ again.

The effectiveness of $\phi_{R_2,*}(H_i)$ implies that $a_{ij} \ge 0$.
And 
\begin{equation}\label{E464}
a_{11} + a_{12} = a_{21} + a_{22} = \beta.
\end{equation}
This can only happen if
either
\begin{equation}\label{E462}
\phi_{R_2,*}(H_1) = \beta H_1 \text{ and } \phi_{R_2,*}(H_2) = \beta H_2
\end{equation}
or
\begin{equation}\label{E463}
\phi_{R_2,*}(H_1) = \beta H_2 \text{ and } \phi_{R_2,*}(H_2) = \beta H_1.
\end{equation}
In either case, $\phi_{R_2}$ maps a general member $H_1\in |H_1|$ onto a curve $\Gamma\in |H_1|$ or $|H_2|$
with a map of degree $\beta$. That is, $\phi_{R_2,*}(H_1) = \beta \Gamma$.

Let $H_1$ be a member of the pencil $|H_1|$ tangent to $D$ at a point $p$. We still have
$\varphi_* (f_Q^* H_1) = \beta\Gamma$. On the other hand, $H_1 = 2p$ in $\Pic(D)$ and
hence $p\not\in \IM(\rho)$
and $p\not\in f_*(\C\cap \T)$. So $\phi_{R_2}$ is regular and finite at $p$. Hence $\Gamma = \phi_{R_2}(G)$
must be smooth and tangent to $D$ at $\delta(p)$ with multiplicity $2\beta$. Contradiction.

\medskip

{\noindent \underline{Case $\phi(R_2) = R_2$ and $g\ge 4$ even}}.
Here $R_2\isom \BF_1$ and 
\begin{equation}\label{E475}
\Exec(\phi_{R_2}) \subset E = C_2,
\end{equation}
where $E$ is the $(-1)$-curve on $R_2$.

Suppose that $\Exec(\phi_{R_2}) = E$. Then
\begin{equation}\label{E452}
\varphi_*(f_Q^* H) = \phi_{R_2,*}(H) = \beta H \Rightarrow \delta(\beta H) = \beta H
\end{equation}
by \propref{PROP008}, where $H$ is the pullback of the hyperplane divisor under the blowup $\BF_1\to \PP^2$
and $\beta$ and $\delta$ are defined as above.
Let
\begin{equation}\label{E387}
\varphi_*(f_Q^* E) = b_1 H + b_2 E
\end{equation}
in $\Pic(R_2)$ for some integers $b_i$. For a general member $G\in |H - E|$, $\phi_{R_2,*}(G)$
meets $D$ at three points $\delta(q_1), \delta(q_2)$ and $\delta(p)$, where $G\cdot D = q_1 + q_2$ and
$p = E\cap D$; it meets $D$ at $\delta(q_1)$ and $\delta(q_2)$ with multiplicity $\beta$ each.
Therefore,
\begin{equation}\label{E472}
\delta(\beta (H-E)) + \delta(b_3 p) = (\beta - b_1) H - b_2 E \Rightarrow
\delta((\beta - b_3) E) = b_1H + b_2E
\end{equation}
in $\Pic(D)$ for some integer $b_3 > 0$. Note that $\beta - b_3 > 0$ due to the effectiveness of
$\varphi_*(f_Q^* E)$.

Combining \eqref{E452} and \eqref{E472} and arguing as before, we obtain
\begin{equation}\label{E473}
\pm((\beta - b_3) H - 3 (\beta - b_3) E) = \left(\beta - b_3  - 3 b_1\right) H 
- 3 b_2 E
\end{equation}
in $\Pic(R_2)$. Obviously, $b_2\ge 0$ since $\varphi_*(f_Q^*(H-E))$ is nef. Therefore, \eqref{E473} gives us
$b_1 = 0$, $b_2 = \beta - b_3$, 
\begin{equation}\label{E474}
\varphi_*(f_Q^* E) = b_2 E \text{ and } \delta(b_2 E) = b_2 E.
\end{equation}

Let $q\in D$ be a point where $\phi_{R_2}$ is not regular and let $M = f_Q^{-1}(q)$.
Then \eqref{E474} shows that $\varphi(M) = E$ for $q\in E$.

Suppose that $q\not\in E$. Using the previous argument
we can show that $q\in D$ and $\varphi(M)$ meets $D$ only at $\delta(q)$ and hence \eqref{E388} holds.

Note that $\varphi_*(f_Q^* H) - \varphi_* M = \beta H - \varphi_*M$ is nef.
Therefore, we have either $\varphi_* M\cdot E = 0$ or $E\subset \varphi(M)$. 

If $\varphi_* M \cdot E = 0$, $\varphi_* M$ is a multiple of $H$ and
we can argue as in the case $R_2\isom \PP^2$ to show that $H = 3q$ using \eqref{E388} and \eqref{E452},
which is impossible for
$q\in\IM(\rho)$. Therefore, $E\subset \varphi(M)$. And since $\varphi(M)$ meets $D$ at a single point,
we must have $\varphi(M) = E$.

In conclusion,
$\varphi(f_Q^{-1}(q)) = E$ for all points $q\in R_2$ where $\phi_{R_2}$ is not
regular.
Consequently, the rational map $g\circ \phi_{R_2}: R_2\dashrightarrow \PP^2$ is regular,
where $g: R_2\to \PP^2$ is the blow-down of $E$. Let $D' = g(D)$. Then it is easy to see that
\begin{equation}\label{E453}
(g\circ\phi_{R_2})^* D' = \beta D + \beta E \Rightarrow \beta = \beta^2
\end{equation}
and we are done.

Suppose that $\Exec(\phi_{R_2}) = \emptyset$. Let
\begin{equation}\label{E454}
\varphi_*(f_Q^* H) = a_{11} H + a_{12} E
\text{ and }
\varphi_*(f_Q^* E) = a_{21} H + a_{22} E
\end{equation}
in $\Pic(R_2)$ for some integers $a_{ij}$. Then
\begin{equation}\label{E455}
\delta(\beta H) = a_{11} H + a_{12} E
\text{ and }
\delta(\beta E) = a_{21} H + a_{22} E
\end{equation}
in $\Pic(D)$. By the same argument as in the case $R_2\isom\BF_0$, we see that
\begin{equation}\label{E456}
\pm \beta(H - 3E) = (a_{11} - 3 a_{21}) H - (3 a_{22} - a_{12}) E
\end{equation}
and hence
\begin{equation}\label{E459}
3(a_{11} - 3 a_{21}) = 3 a_{22} - a_{12} = \pm 3\beta.
\end{equation}
Combining with $3a_{11} + a_{12} = 3\beta$, we obtain
\begin{equation}\label{E457}
a_{11} = a,  a_{12} = 3\beta - 3a, a_{21} = \frac{a}3 \mp \frac{\beta}{3} \text{ and }
a_{22} = \pm\beta +\beta - a.
\end{equation}
Obviously, $\varphi_*(f_Q^* H)$ is big and nef. Hence $a_{11} > |a_{12}|$, $a_{12} \le 0$ and
\begin{equation}\label{E458}
\beta \le a < \frac{3}{2}\beta.
\end{equation}
The effectiveness of $\varphi_*(f_Q^* E)$ requires that $a_{21} + a_{22} \ge 0$. Therefore,
\begin{equation}\label{E465}
a_{21} = \frac{a}3 - \frac{\beta}{3} \text{ and }
a_{22} = 2\beta - a.
\end{equation}
The nefness of $\varphi_*(f_Q^*(H-E))$ requires that $a_{11} - a_{21} \ge a_{22} - a_{12}$. Hence
we must have $a = \beta$,
\begin{equation}\label{E466}
\varphi_*(f_Q^* H) = \beta H \text{ and }
\varphi_*(f_Q^* E) = \beta E.
\end{equation}
Note that this implies
\begin{equation}\label{E467}
\phi_{R_2,*}(H-E) = \varphi_*(f_Q^* (H - E)) = \beta(H - E)
\end{equation}
which means that $\phi_{R_2}$ maps a general member $G$ of the pencil $|H-E|$ onto
$\Gamma = \phi_{R_2}(G)\in |H-E|$ with a map of degree $\beta$. Using the same argument as in the case
$R_2\isom \BF_0$, we can show that this is impossible.

\medskip

{\noindent \underline{Case $\phi(R_1) = R_1$}}. Now we simply switch from $(X,\varphi,\phi,\varepsilon)$
to $({\widehat X}, {\widehat \varphi}, {\widehat \phi}, {\widehat \varepsilon})$ (see \eqref{E398} and
\eqref{E176}). Obviously, ${\widehat \phi}({\widehat R}_1) = {\widehat R}_1$ and ${\widehat R}_1\isom
\PP^2$, $\BF_0$ or $\BF_1$. This reduces it to the previous cases.

\end{document}